\newcommand{\Href}[2]{\hyperref[#2]{#1~\ref{#2}}}
\newtheorem{thm}{Theorem}
\newtheorem{prp}{Proposition}[section]
\newtheorem{lem}{Lemma}[section]
\newtheorem{cor}{Corollary}[section]
\newtheorem{claim}{Claim}[section]
\theoremstyle{definition}
\newtheorem{dfn}{Definition}[section]
\newcommand{\st}{:\;}
\newcommand{\norm}[1]{\left\|#1\right\|}
\newcommand{\enorm}[1]{\left|#1\right|}
\newcommand{\conv}{\mathrm{conv}}%
\newcommand{\iprod}[2]{\left\langle#1,#2\right\rangle}%
\def\N{{\mathbb N}}
\def\R{{\mathbb R}}
\def\Lin{\mathop{\rm Lin}}
\def\phi{\varphi}
\def\epsilon{\varepsilon}
\newcommand{\dist}[2]{\operatorname{dist}\!\left( #1, #2 \right)}%
\newcommand{\la}{\lambda}
\newcommand{\e}{\varepsilon}
\newcommand{\BF}[2]{\operatorname{B}\nolimits_{#1}\!\left(#2\right)}%
\DeclareMathOperator{\mglname}{\varrho}
\newcommand{\mglx}[1]{\mglname\nolimits_{X}\!\!\!\:\left( #1 \right)}%
\DeclareMathOperator{\mconame}{\delta}
\newcommand{\mcox}[1]{\mconame\nolimits_{X}\!\left( #1 \right)}%
\newcommand{\zetap}[1]{\zeta^{+}_{X}\!\left( #1 \right)}%
\newcommand{\omx}[1]{\omega_{X}\!\left( #1 \right)}%
\newcommand{\omxi}[1]{\omega_{X}^{-1}\!\left( #1 \right)}%
\newcommand{\psif}[1]{\psi\!\left( #1 \right)}%
\newcommand{\length}[1]{\mathrm{length}\left(#1\right)}
\title{Rectifiable curves in  proximally smooth sets}
\renewcommand{\showlabelsetlabel}[1]%
{\showlabelfont \href{#1}{LABEL}}
\author{Grigory Ivanov\address{Grigory Ivanov: 
Institute of Science and Technology Austria (IST Austria), 
Kleusteneuburg, 3400, Austria; Laboratory of Combinatorial and Geometrical Structures, Moscow Institute of Physics and Technology, Moscow, 141701, Russia}
\email{grimivanov@gmail.com}
\and
Mariana Lopushanski \address{Steklov Mathematical Institute of the Russian Academy of Sciences, 
Moscow, Russia}
\email{masha.alexandra@gmail.com }
}
\thanks{ Supported by Russian Science Foundation, project N 19-11-00087}
\date{\today}
\begin{document}
\begin{abstract}
We provide  an algorithm of constructing a rectifiable curve between two sufficiently close points  of a proximally smooth
set in a uniformly convex and uniformly smooth Banach space. 
Our algorithm returns a reasonably short curve between two sufficiently close points of a proximally smooth set, is iterative and uses a certain modification of the metric projection. We estimate  the length of a constructed curve and its deviation from the segment with the same endpoints. These estimates coincide up to a constant factor with those for the geodesics in a proximally smooth set  in a Hilbert space.
\end{abstract}
\maketitle

\section{Introduction}

Weakly convex sets have been studied in non-smooth analysis for several decades. Several established mathematicians
proposed their own definition of a weakly convex set,
among them are Federer \cite{federer1959curvature},
 Efimov and Stechkin \cite{efimov1958some}, Vial \cite{vial}, 
Rockafellar (for akin classes of functions) \cite{rockafellar1981favorable}.
In this paper we stick to the most convenient in our opinion definition of a weakly convex set  due to Clarke, Stern and Wolenski \cite{Clarke1995}. A closed set in a Banach space is called
\emph{proximally smooth with constant $R$} if
 distance to it from a point of the space is continuously
differentiable in the open $R$-neighborhood of this set excluding the set itself (this and other definitions used in the introduction  are formally given below in  \Href{Section}{sec:notation}).  
As for other definitions, each of them characterizes  weakly convex sets as the set with a certain property, e.g. differentiability of the distance function, supporting by balls, hypomonotonicity of the normal cone, etc.
It turns out that many of these definitions are equivalent in Hilbert space, which allows using weakly convex sets in different applications. See, for example, \cite{Balashov}  and \cite{IvanovGames}.
However, everything is a bit trickier in Banach spaces. 
Different classes of weakly convex sets in Banach spaces were studied in \cite{Bernard2006} , \cite{Bernard2011}, \cite{Alimov}. As was shown in \cite{IvBal2009}, some of the definitions are still equivalent
in uniformly smooth and uniformly convex Banach spaces, but some not. The first author \cite{Ivanov:233858} showed that the hypomonotonicity of the normal cone of a closed set  fails to be  equivalent to the proximally smoothness in any Banach space that is not isomorphic to a Hilbert space (in a Hilbert space these two properties are equivalent, see \cite[Theorem 1.9.1]{IvMonEng} and \cite[Corollary 2.2]{Pol_Rock_Thi1}).

Another equivalent to  the proximal smoothness property of a closed set in a Hilbert space  was given in \cite[Theorem 1.14.2]{IvMonEng}. We formulate it as follows.
\begin{prp}
Let $A$ be a closed set in a Hilbert space and $R > 0.$ The
following conditions are equivalent:
\begin{enumerate}
\item The set  $A$ is proximally smooth with constant $R.$ 
\item  For any two different points $x_0, x_1 \in A$ with 
$\enorm{x_0 - x_1} < 2R,$ there exists a curve $\gamma$ in $A$ with  endpoints $x_0$ and $x_1,$ whose length is at most
\[
2R \arcsin \left(\frac{\enorm{x_0 - x_1}}{2R} \right).
\]
\end{enumerate}
\end{prp}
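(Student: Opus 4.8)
The plan is to prove the two implications separately. The direction $(2)\Rightarrow(1)$ is short: it reduces to a standard characterisation of proximal smoothness plus a length comparison on a sphere. Indeed, I would use the well-known fact that, in a Hilbert space, $A$ is proximally smooth with constant $R$ if and only if the metric projection $P_A$ is single-valued on the open $R$-neighbourhood $U_R(A)$ of $A$ (see \cite{Clarke1995}); so it suffices to derive a contradiction from the existence of $y\in U_R(A)$ with two distinct nearest points $a,b\in A$. Put $d=\dist{y}{A}\in(0,R)$ and $\delta=\enorm{a-b}$, so $0<\delta\le 2d<2R$. By $(2)$ there is a curve $\gamma$ in $A$ from $a$ to $b$ with $\length(\gamma)\le 2R\arcsin(\delta/2R)$. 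Every point of $A$ lies at distance at least $d$ from $y$, so $\gamma$ avoids the open ball $B(y,d)$ while $a,b\in\partial B(y,d)$. The nearest-point projection onto $\overline B(y,d)$ is $1$-Lipschitz, fixes $a$ and $b$, and on the complement of the open ball coincides with the radial retraction onto $\partial B(y,d)$; composing it with $\gamma$ gives a curve on $\partial B(y,d)$ joining $a$ and $b$ of length at most $\length(\gamma)$. Hence $\length(\gamma)$ is at least the geodesic distance between $a$ and $b$ on the sphere of radius $d$, namely $2d\arcsin(\delta/2d)$. Since $r\mapsto 2r\arcsin(\delta/2r)$ is strictly decreasing and $d<R$, we get $2R\arcsin(\delta/2R)\ge\length(\gamma)\ge 2d\arcsin(\delta/2d)>2R\arcsin(\delta/2R)$, a contradiction.

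For $(1)\Rightarrow(2)$ the main tool is a \emph{bisection estimate}: for $u_0,u_1\in A$ with $\ell:=\enorm{u_0-u_1}\in(0,2R)$ and midpoint $m=(u_0+u_1)/2$, one has $\dist{m}{A}\le\ell/2<R$, so $p:=P_A(m)$ is well defined and single-valued by $(1)$, and moreover $\max\{\enorm{u_0-p},\enorm{u_1-p}\}\le 2R\sin\bigl(\tfrac12\arcsin(\ell/2R)\bigr)$. I would prove this in two steps, with $t=\dist{m}{A}$ and, assuming $t>0$, $\zeta=(m-p)/t$. First, proximal smoothness of radius $R$ implies that the open ball $B(p+R\zeta,R)$ misses $A$, equivalently $\iprod{\zeta}{x-p}\le\enorm{x-p}^2/(2R)$ for all $x\in A$. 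Applying the ball condition to $u_0$ and $u_1$ and using $u_0+u_1=2m$ together with $\enorm{m-(p+R\zeta)}=R-t$, a sum-of-squares computation yields $t\le R-\sqrt{R^2-\ell^2/4}=:h(\ell)$. Second, writing $u_0-p=(u_0-m)+t\zeta$ and inserting this into the proximal-normal inequality, one solves for $\iprod{\zeta}{u_0-m}$ and simplifies to $\enorm{u_0-p}^2\le R(\ell^2/4-t^2)/(R-t)$; this quantity is nondecreasing in $t$ on $[0,h(\ell)]$, so substituting $t=h(\ell)$ and using $\ell^2/4=h(\ell)(2R-h(\ell))$ gives $\enorm{u_0-p}^2\le\ell^2/4+h(\ell)^2=4R^2\sin^2\bigl(\tfrac12\arcsin(\ell/2R)\bigr)$, and likewise for $u_1$. (If $t=0$, then $m\in A$ and $\enorm{u_i-m}=\ell/2=2R\sin\tfrac{\alpha}{2}\cos\tfrac{\alpha}{2}$ with $\alpha=\arcsin(\ell/2R)$, which is even smaller.)

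Granting the bisection estimate, I would iterate it dyadically. Fix $x_0,x_1\in A$, set $\delta=\enorm{x_0-x_1}\in(0,2R)$ and $\alpha_0=\arcsin(\delta/2R)\in(0,\pi/2)$. Starting from the two vertices $x_0,x_1$, at each step replace every edge $[u_0,u_1]$ by the two edges $[u_0,p]$, $[p,u_1]$ with $p=P_A((u_0+u_1)/2)$. By induction on the level $k$, the $2^k+1$ vertices $v_{k,j}\in A$ satisfy $\enorm{v_{k,j}-v_{k,j+1}}\le 2R\sin(\alpha_0/2^k)$, so the level-$k$ polygonal path $\Gamma_k$ has length at most $2^k\cdot 2R\sin(\alpha_0/2^k)<2R\alpha_0$. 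Parametrising $\Gamma_k$ on $[0,1]$ with its $j$-th vertex at $j/2^k$, successive paths agree at level-$k$ dyadic parameters and the edge lengths shrink geometrically, so $(\Gamma_k)$ is Cauchy in the uniform norm and converges uniformly to a curve $\gamma$ with $\gamma(0)=x_0$, $\gamma(1)=x_1$ and $\gamma(j/2^k)=v_{k,j}\in A$ for every dyadic argument; since $A$ is closed, $\gamma([0,1])\subset A$. Finally, by lower semicontinuity of length under uniform convergence, $\length(\gamma)\le\liminf_k\length(\Gamma_k)\le\lim_k 2^k\cdot 2R\sin(\alpha_0/2^k)=2R\alpha_0=2R\arcsin(\delta/2R)$, and $\gamma$ is Lipschitz, hence rectifiable, so $(2)$ holds.

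The main obstacle is getting the \emph{exact} constant $2R\arcsin(\delta/2R)$ rather than merely a bound of the correct order: this is what forces the sharp bisection estimate $\enorm{u_0-p}\le 2R\sin(\tfrac12\arcsin(\ell/2R))$ — the value one obtains on a circle of radius $R$ — in place of the lossy triangle-inequality bound $\enorm{u_0-p}\le\ell/2+h(\ell)$, and it is precisely this sharpness that makes the iterated edge lengths equal $2R\sin(\alpha_0/2^k)$ and the total lengths converge to $2R\alpha_0$. A secondary, more routine difficulty is the passage to the limit: the polygonal paths have edges that need not lie in $A$, so one must verify that their uniform limit is a genuine rectifiable curve contained in the closed set $A$ with the stated length bound, which the dyadic/Cauchy bookkeeping above handles together with closedness of $A$ and lower semicontinuity of length.
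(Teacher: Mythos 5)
The paper does not actually prove this Proposition --- it is quoted from the monograph cited as \cite{IvMonEng} --- so there is no in-paper proof to match; still, your $(1)\Rightarrow(2)$ argument is essentially the Hilbert-space specialization of the paper's Algorithm (dyadic projection of midpoints, geometric decay of edge lengths, passage to a uniform limit), sharpened so as to keep the exact constant. I checked the key bisection estimate and it is correct: the parallelogram identity with the center $p+R\zeta$ gives $t\le R-\sqrt{R^2-\ell^2/4}=h(\ell)$, the proximal-normal inequality gives $\enorm{u_0-p}^2\le R(\ell^2/4-t^2)/(R-t)$, and this expression is indeed maximal at $t=h(\ell)$ with value $2Rh(\ell)=4R^2\sin^2\bigl(\tfrac12\arcsin(\ell/2R)\bigr)$; the dyadic bookkeeping, the containment of the limit curve in the closed set $A$, and the lower semicontinuity of length are all fine. (The supporting-ball property you use is exactly assertion (3) of Proposition \ref{prop:projection_continuity}, valid in particular in Hilbert space.)

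There is, however, one genuine gap, in $(2)\Rightarrow(1)$. The characterization you invoke --- $A$ is proximally smooth with constant $R$ iff the metric projection is single-valued on the open $R$-neighborhood --- must be read as ``\emph{nonempty} and single-valued''. In an infinite-dimensional Hilbert space nonemptiness of $P_A$ on the tube is not automatic for a closed set, and it cannot be dropped: if some $y$ with $0<\dist{y}{A}<R$ has $P_A(y)=\emptyset$, then $d_A$ fails to be Fr\'echet differentiable at $y$ (Fr\'echet differentiability at $y\notin A$ forces every minimizing sequence to converge to $y-d_A(y)\nabla d_A(y)$), so $A$ is already not proximally smooth with constant $R$. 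Your argument only rules out a point of the tube with \emph{two distinct} nearest points, so the reduction is incomplete as stated (in finite dimensions the issue disappears). The gap is repairable by the very same sphere-projection comparison: given $y$ with $d=\dist{y}{A}<R$ and a minimizing sequence $(a_n)\subset A$ with $\enorm{y-a_n}\le d+\epsilon_n$, connect $a_n$ and $a_m$ by a curve from $(2)$, project it onto $\BF{d}{y}$, and compare the resulting length bound $2R\arcsin\!\left(\delta_{nm}/2R\right)$ with the spherical distance $2d\arcsin\!\left((\delta_{nm}-\epsilon_n-\epsilon_m)/2d\right)$, where $\delta_{nm}=\enorm{a_n-a_m}$; the strict monotonicity of $r\mapsto 2r\arcsin(\delta/2r)$ then forces $\delta_{nm}\to 0$, so the sequence is Cauchy and, by closedness, $P_A(y)\neq\emptyset$. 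With that paragraph added, your proof is complete.
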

\noindent This result plays a crucial role in proofs of many other important results related to the properties and applications of proximally smooth sets. For example, it implies the existence and uniqueness of the  shortest path connecting two sufficiently close points of a proximally smooth set in a Hilbert space, and that a ``locally'' proximally smooth set in a Hilbert space is proximally smooth.  

However,  even in a sufficiently smooth and convex 
Banach space, the existence of rectifiable curve between to sufficiently close points of a proximally smooth set has been unknown yet. One might argue that the definition of a proximally smooth set implies locally connectedness of a proximally smooth set
since the metric projection of a sufficiently short segment with endpoints in a proximally  smooth set onto the set itself has to be continuous (to be more precise, this argument works in a uniformly smooth and uniformly smooth spaces for segments strictly shorter than $2R$). It is not clear whether  the curve
constructed in such a way is rectifiable since the metric projection onto a proximally smooth set is H\"older continuous \cite[Theorem 3.2]{ivanov2015sharp}, but not Lipschitz continuous in a Banach space not isomorphic to a  Hilbert space. Even if one can show that such a curve is rectifiable, the same rather unfortunate property of the metric projection implies that this curve has quite nasty behavior.

In this paper we provide  an algorithm of constructing a rectifiable curve between two sufficiently close points  of a proximally smooth
set in a uniformly convex and uniformly smooth Banach space. 
Our \Href{Algorithm}{algorithm} returns a reasonably short curve between two sufficiently close points of a proximally smooth sets, is iterative and uses a certain modification of the metric projection.  
We collect two important properties of the curve constructed with the use of our algorithm in the two following Theorems.
\begin{thm}\label{thm:rectifable_curve} 
Let $X$ be a uniformly convex and uniformly smooth  Banach space whose modulus of smoothness is of power type $s.$
  Then there are positive constants $\beta_L$ and $L$ satisfying the following property. 
Let  $A \subset X$ be  a proximally smooth set with positive constant  $R,$  and let  $x_0, x_1 \in A$ with   
$\frac{\norm{x_0 -x_1}}{R} < \beta_L.$
Then \Href{Algorithm}{algorithm} returns the curve $\gamma$  in $A$ with endpoints $x_0$ and $x_1$
such that   inequality  
\begin{equation}
\label{eq:thm_length_of_curve}
\length{ \gamma } \leq \norm{x_0 - x_1}
\left( 1 + L \left(\frac{\norm{x_0 - x_1}}{R}\right)^{s(s-1)}\right)
\end{equation}
holds.
\end{thm}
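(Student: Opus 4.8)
The plan is to analyze the iterative algorithm step by step and to estimate the total length as a geometric-type series. At each iteration we have a current pair of points in $A$ at some distance $d$, and the algorithm replaces the segment between them by a modified metric projection onto $A$ of (roughly) the midpoint of that segment, producing two new pairs each at distance approximately $d/2$ plus a small error. The first task is to quantify that error: if $x, y \in A$ with $\norm{x-y} = d < 2R$ and $m = \frac{x+y}{2}$, then the modified projection $p$ of $m$ onto $A$ lies within distance $O\!\left(R (d/R)^{?}\right)$ of $m$, where the exponent comes from the modulus of smoothness being of power type $s$. Consequently $\norm{x - p}$ and $\norm{y - p}$ are each at most $\frac{d}{2}\bigl(1 + c\,(d/R)^{\theta}\bigr)$ for an appropriate power $\theta$ depending on $s$. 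I expect $\theta$ to be $s(s-1)$ or a closely related quantity; pinning down the exact exponent and checking it propagates correctly through the recursion is the crux.

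Next I would set up the recursion. Let $\ell_n$ denote the common (up to the multiplicative error) length scale of the segments at level $n$, with $\ell_0 = \norm{x_0 - x_1}$; the length of the polygonal approximation at level $n$ is about $2^n \ell_n$, and from the single-step estimate $\ell_{n+1} \le \frac{1}{2}\ell_n\bigl(1 + c (\ell_n/R)^{\theta}\bigr)$. Since $\ell_n \lesssim 2^{-n} \ell_0$, the product $\prod_{n\ge 0}\bigl(1 + c (\ell_n/R)^{\theta}\bigr)$ converges, and its logarithm is bounded by $c \sum_n (\ell_n/R)^{\theta} \le c' (\ell_0/R)^{\theta}$ because the series is dominated by a convergent geometric series with ratio $2^{-\theta}$. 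This yields $\lim_n 2^n \ell_n \le \ell_0 \bigl(1 + L (\ell_0/R)^{\theta}\bigr)$, which is exactly \eqref{eq:thm_length_of_curve} once $\theta = s(s-1)$. Here one must also check that the polygonal curves converge to an actual curve $\gamma$ in $A$ (using closedness of $A$ and uniform Cauchyness of the parametrizations), and that $\length{\gamma}$ is bounded by $\liminf 2^n \ell_n$ by lower semicontinuity of length.

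Two points require care beyond the bookkeeping. First, the smallness hypothesis $\norm{x_0 - x_1}/R < \beta_L$ must be chosen so that at \emph{every} level $\ell_n < 2R$ (indeed comfortably below $2R$), so that the modified metric projection is well-defined and single-valued and the single-step estimate applies; since $\ell_n$ is decreasing this is guaranteed by a condition on $\ell_0$ alone, but the constant $\beta_L$ depends on the constants in the uniform convexity/smoothness moduli. Second — and this is the main obstacle — one needs the single-step perturbation estimate with the \emph{right} exponent: a naive bound using only that the projection is H\"older (as recalled in the introduction via \cite[Theorem 3.2]{ivanov2015sharp}) is too weak, so the argument must exploit the specific geometry of projecting the midpoint of a chord with both endpoints already on $A$, combining the defining differentiability of the distance function in the $R$-neighborhood with power-type estimates on the duality mapping. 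Getting the exponent $s(s-1)$ rather than something worse, and verifying it is consistent across all iterations, is where the real work lies; the rest is summation of a convergent series.
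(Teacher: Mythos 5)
Your global architecture coincides with the paper's: track the maximal segment length $\Delta_i$ of the $i$-th polygonal approximation, prove a one-step contraction of the form $\Delta_i\le\frac{\Delta_{i-1}}{2}\bigl(1+c\,(\Delta_{i-1}/R)^{\theta}\bigr)$, sum the resulting series (dominated by a geometric one because $\Delta_i\lesssim(\mu/2)^i\Delta_0$), and pass to the limit to get a continuous rectifiable curve. However, the proposal never establishes the one-step estimate with the exponent $\theta=s(s-1)$, and you yourself defer it as ``the crux''; since the whole content of inequality \eqref{eq:thm_length_of_curve} is precisely that exponent, this is a genuine gap rather than a routine omission. Concretely, your intermediate step ``$p$ lies within distance $O(R(d/R)^{?})$ of $m$, \emph{consequently} $\norm{x-p},\norm{y-p}\le\frac d2(1+c(d/R)^{\theta})$'' does not follow from a displacement bound alone: the best one can prove for the displacement is $\dist{x_\la}{A}\le 8R\la(1-\la)\mglx{\norm{x_0-x_1}/R}\lesssim d\,(d/R)^{s-1}$ (the paper's Lemma~\ref{lem:dist_to_pr_sm_set}, proved via the supporting-ball characterization in Proposition~\ref{prop:projection_continuity} and a supporting functional of $x_\la-y$), and feeding this into the triangle inequality only yields a per-step relative error $O\bigl((d/R)^{s-1}\bigr)$, i.e.\ a strictly weaker exponent than $s(s-1)$ for $s\in(1,2]$.

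The missing idea is the combination of the \emph{slice}-projection with the modulus $\zeta_X^{+}$. The algorithm does not take an arbitrary nearest point: the new vertex is chosen in $P_A([x_0,x_1])\cap\bigl(H_{x_0-x_1}+\frac{x_0+x_1}{2}\bigr)$, so its offset from the midpoint is quasi-orthogonal to the chord $x_1-x_0$; Lemma~\ref{lem:weakly_convex_sausage_waist} (a similarity argument) bounds this offset by $R^\prime(d,R)\lesssim d\,(d/R)^{s-1}$, and then quasi-orthogonality turns the length increase into a \emph{second-order} effect measured by $\zeta_X^{+}$: the new half-chord has length at most $\frac d2\,\zetap{2R^\prime(d,R)/d}\le\frac d2\bigl(1+\mglx{4R^\prime(d,R)/d}\bigr)\le\frac d2\bigl(1+C\,(d/R)^{s(s-1)}\bigr)$, using Proposition~\ref{prop:zetap_mglx_equivalence} and the power-type bound on $\mglx{\cdot}$. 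This is exactly where $s(s-1)$ comes from, and it is the step your plan lacks (your mention of ``power-type estimates on the duality mapping'' gestures at it but does not supply the quasi-orthogonality device or the modulus $\zeta_X^{+}$). The remaining items you list (choice of $\beta_L$ so that the recursion stays in the admissible range, convergence of the parametrizations, lower semicontinuity of length) match the paper's treatment via Claim~\ref{claim:nasty}, Claim~\ref{claim:bounder} and the limit argument in Theorem~\ref{thm:length}, and would be routine once the one-step estimate is in place.
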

\begin{thm}\label{thm:rectifable_curve_inclusion} 
Let $X$ be a uniformly convex and uniformly smooth  Banach space whose modulus of smoothness is of power type $s.$
  Then there are positive constants $\beta_I$ and $L_I$ satisfying the following property. 
Let  $A \subset X$ be  a proximally smooth  with positive constant  $R$
set,  and let  $x_0, x_1 \in A$ with   
$\frac{\norm{x_0 -x_1}}{R} < \beta_I.$
Then \Href{Algorithm}{algorithm}  returns the curve $\gamma$  in $A$ with endpoints $x_0$ and $x_1$
such that   inclusion 
\begin{equation}
\label{eq:thm_inclusion_curve}
 \gamma \subset  
 \conv \left\{
 x_0, \BF{r}{\frac{x_0 + x_1}{2}}, x_1 \right\}
\end{equation}
holds, where $\BF{r}{\frac{x_0 + x_1}{2}}$ is the closed ball centered at $\frac{x_0 + x_1}{2}$ of radius 
\[
r = L_I \norm{x_0 - x_1 } 
\left(\frac{\norm{x_0 - x_1}}{R}\right)^{s-1}. 
\]
\end{thm}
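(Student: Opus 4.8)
The plan is to exploit the iterative structure of the Algorithm: it produces $\gamma$ as a limit of polygonal paths whose vertices are obtained, at each stage, by applying the modified metric projection to midpoints of segments joining previously constructed vertices. The key geometric input is a one-step estimate: if $y_0, y_1 \in A$ are sufficiently close and $m = \tfrac{y_0+y_1}{2}$, then the new vertex $p$ produced from $y_0, y_1$ satisfies $\norm{p - m} \le C \,\norm{y_0 - y_1}\left(\tfrac{\norm{y_0-y_1}}{R}\right)^{s-1}$. This is precisely the sort of bound one gets from proximal smoothness combined with the power-type $s$ modulus of smoothness — it should already be available (or immediate) from the length estimate machinery behind Theorem \ref{thm:rectifable_curve}, since the deviation of a curve of length $\norm{y_0-y_1}(1+O((\norm{y_0-y_1}/R)^{s(s-1)}))$ from its chord is controlled by the same order. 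First I would isolate this one-step deviation lemma and fix the constant $C$.

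Next I would set up the recursion on the dyadic scale. Write $d_0 = \norm{x_0 - x_1}$; at generation $k$ the segments joining consecutive vertices have length $d_k$, and the one-step lemma gives $d_{k+1} \le \tfrac{1}{2} d_k + C\, d_k (d_k/R)^{s-1}$, so that (for $d_0/R$ small, using $\beta_I$) one has $d_k \le 2^{-k} d_0 \cdot (1 + o(1))$, i.e. the edge lengths contract geometrically. The deviation of the generation-$(k+1)$ vertex from the \emph{generation-$k$ polygonal path} is then at most $C\,d_k(d_k/R)^{s-1} \lesssim C\, 2^{-k(s-1)}\cdot 2^{-k}\, d_0 (d_0/R)^{s-1}$ up to the contraction slack. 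Summing a telescoping bound over all generations $k \ge 0$ shows that every point of $\gamma$ lies within distance $r := L_I\, d_0 (d_0/R)^{s-1}$ of the initial segment $[x_0, x_1]$, where $L_I$ is $C$ times the convergent geometric series $\sum_{k\ge 0} 2^{-k(s-1)}$ (times a harmless factor from the contraction slack, absorbed by shrinking $\beta_I$). Since every point of $[x_0,x_1]$ is within distance $\tfrac{1}{2}d_0$ of $m = \tfrac{x_0+x_1}{2}$, each point $z \in \gamma$ satisfies $\norm{z - m} \le \tfrac12 d_0 + r$; I would then note $\tfrac12 d_0 \le r$ once $d_0/R$ is small enough (again shrinking $\beta_I$), or alternatively carry the sharper bound and observe directly that $z$ lies in the convex hull of $x_0$, $x_1$ and a ball about $m$ of the stated radius. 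Concretely, writing $z = (1-t)x_0 + t x_1 + (z - \pi(z))$ where $\pi(z)$ is the nearest point of $[x_0,x_1]$, the point $z$ differs from the chord point $\pi(z)$ by a vector of norm $\le r$; since $\pi(z) \in [x_0,x_1] \subset \conv\{x_0, x_1\}$ and the ball $\BF{r}{m}$ contains $m$ plus all perturbations of norm $\le r$, a short argument places $z$ in $\conv\{x_0, \BF{r}{m}, x_1\}$, which is what \eqref{eq:thm_inclusion_curve} asserts.

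The main obstacle is the one-step deviation lemma in the Banach (non-Hilbert) setting: in a Hilbert space the nearest-point projection of a midpoint onto a proximally smooth set deviates from the chord by an explicit amount coming from the $R$-ball support property, but in a uniformly smooth space of power type $s$ one must instead use the modified projection built into the Algorithm and track how the modulus of smoothness enters. I expect this to reduce to estimating, for the modified projection $p$ of $m$, both $\norm{p - m}$ and the ``tilt'' of the supporting functional at $p$, and then converting the $s$-smoothness inequality into the claimed $(\norm{y_0-y_1}/R)^{s-1}$ order — the same computation that underlies \eqref{eq:thm_length_of_curve}. A secondary, purely bookkeeping, difficulty is making the geometric summation uniform: one must choose $\beta_I$ so small that the multiplicative slack $(1+o(1))$ accumulated across all generations stays bounded by a fixed constant, so that the series defining $L_I$ genuinely converges; this is routine but needs to be stated carefully since there are infinitely many generations.
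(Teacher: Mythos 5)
Your argument, as written, establishes only a tube estimate: every point of $\gamma$ lies within distance $r = L_I\norm{x_0-x_1}\left(\frac{\norm{x_0-x_1}}{R}\right)^{s-1}$ of the chord $[x_0x_1]$. That part is sound in outline and matches the first half of the paper's argument (the one-step deviation is \Href{Corollary}{cor:waist_distance_weak_c}, the geometric summation is \Href{Claim}{claim:sum} and \Href{Lemma}{lem:final_estimate}). The gap is in your final step: the set $\conv\left\{x_0, \BF{r}{\frac{x_0+x_1}{2}}, x_1\right\}$ is not a tube but a spindle that pinches to the points $x_0$ and $x_1$; at a chord point at distance $\delta$ from $x_0$, the admissible transverse deviation inside this convex hull is of order $\frac{2\delta}{\norm{x_0-x_1}}\,r$, which vanishes as $\delta\to 0$. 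So ``$\gamma$ stays within distance $r$ of the chord'' does not imply the claimed inclusion: for instance $x_0 + r v$, with $v$ a unit vector quasi-orthogonal to $x_1-x_0$, lies in the tube but outside the convex hull. Your fallback ``$\tfrac12\norm{x_0-x_1}\le r$ once $\norm{x_0-x_1}/R$ is small'' goes the wrong way, since $r/\norm{x_0-x_1}=L_I\left(\frac{\norm{x_0-x_1}}{R}\right)^{s-1}\to 0$, so for small data $r\ll\norm{x_0-x_1}$ and the ball $\BF{r}{\frac{x_0+x_1}{2}}$ only covers the middle of the chord.

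The missing idea is the control of $\gamma$ near the endpoints, which is the bulk of \Href{Section}{sec:inclusion} in the paper. There one first proves a lower bound on the along-chord coordinate, $g_2(t)\ge \Delta_0/4$ for $t\in\left[\frac12,1\right]$ (\Href{Lemma}{lem:final_estimates2}); combined with the tube bound this places $f\!\left(\left[\frac12,1\right]\right)$ inside a cone with apex $x_0$ (\Href{Corollary}{cor:inclusion_second_half}). One then exploits the self-similarity of the algorithm --- $f$ restricted to $[0,2^{-i}]$ is exactly the curve returned by the algorithm for the endpoints $x_0$ and $f(2^{-i})$ --- together with the geometric decay of $\Delta_i$ and of the cone radii $r_i$ to show that the cones $G_i$ are nested, so the entire curve lies in a single cone with apex $x_0$ (\Href{Lemma}{lem:final_estimate2}); by symmetry the same holds with apex $x_1$, and these two cones yield the convex-hull inclusion. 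Nothing in your proposal plays the role of this endpoint/cone analysis; your one-step deviation lemma and geometric decay are the right ingredients for it, but the nesting of cones anchored at the endpoints must be carried out explicitly, and without it you have proved a cylinder statement rather than the theorem.
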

We note that we will estimate constants $\beta_L, L, \beta_I, L_I$ using  constants related to the smoothness of Banach space $X.$ Moreover, we do not use the uniform convexity of $X$ directly; the reason for using this condition on a space is being able to use different definitions of a weakly convex set, which are equivalent to the proximally smoothness in a 
uniformly convex and uniformly smooth Banach space. This condition might be relaxed, for example, in a finite dimensional space, but it will add complications and little to the ideas. Also, since the complement of the interior of the unit ball is proximally smooth with constant one and by basic properties of the modulus of smoothness, the bound on $r$ in  \Href{Theorem}{thm:rectifable_curve_inclusion} is asymptotically tight.
The bound on the length of the constructed curve in  
\Href{Theorem}{thm:rectifable_curve} coincides up to a constant factor with the bound on the shortest path between two points on a fixed distance in a proximally smooth set in uniformly smooth space whose modulus of smoothness is of power type 2 (for example, in a Hilbert space and in $L_p$ spaces with $p \geq 2$).

Since the metric projection onto a proximally smooth set in a uniformly convex and uniformly smooth Banach space is H\"older continuous \cite[Theorem 3.2]{ivanov2015sharp}, we get the following result as an immediate consequence of \Href{Theorem}{thm:rectifable_curve}.
\begin{cor}
Let $X$ be a uniformly convex and uniformly smooth  Banach space whose moduli of smoothness and convexity are of power type.
Let  $A \subset X$ be  a proximally smooth set with positive constant  $R,$  and let  $x_0, x_1 \in A$ with   
$\frac{\norm{x_0 -x_1}}{R} < 2.$
Then there is a rectifiable curve $\gamma$ in $A$ with endpoints $x_0$ and $x_1.$
\end{cor}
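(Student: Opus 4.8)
The plan is to bootstrap from \Href{Theorem}{thm:rectifable_curve}, which only guarantees a curve when $\frac{\norm{x_0-x_1}}{R}<\beta_L$, to the full range $\frac{\norm{x_0-x_1}}{R}<2$ by a chaining argument along the metric projection of the segment $[x_0,x_1]$. First I would recall that for a proximally smooth set $A$ in a uniformly convex and uniformly smooth Banach space and any segment $[x_0,x_1]$ with $\norm{x_0-x_1}<2R$, the metric projection $P_A$ is single-valued and continuous on $[x_0,x_1]$ — this is exactly the locally connectedness remark made in the introduction, valid since the segment is strictly shorter than $2R$. Thus $t\mapsto P_A\bigl((1-t)x_0+tx_1\bigr)$ is a continuous path in $A$ from $x_0$ to $x_1$; it need not be rectifiable, but it is uniformly continuous on the compact interval $[0,1]$.

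Next I would use uniform continuity to choose a partition $0=t_0<t_1<\dots<t_N=1$ fine enough that consecutive projected points $y_k:=P_A\bigl((1-t_k)x_0+t_kx_1\bigr)$ satisfy $\frac{\norm{y_k-y_{k+1}}}{R}<\beta_L$ for every $k$. Here one must be slightly careful: the quantitative estimates in \Href{Theorem}{thm:rectifable_curve} depend on the ambient space's modulus of smoothness but not on $A$ or $R$ beyond the scale-invariant ratio, so $\beta_L$ is an absolute constant of $X$ and the partition can indeed be chosen. Since the $y_k$ all lie in $A$ and consecutive ones are close in the required sense, \Href{Theorem}{thm:rectifable_curve} produces, for each $k$, a rectifiable curve $\gamma_k\subset A$ joining $y_k$ to $y_{k+1}$ with $\length{\gamma_k}\le \norm{y_k-y_{k+1}}\bigl(1+L(\norm{y_k-y_{k+1}}/R)^{s(s-1)}\bigr)\le 2\norm{y_k-y_{k+1}}$, say, if we also demand $\beta_L$ small enough that the correction term is at most $1$. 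Concatenating $\gamma_0,\gamma_1,\dots,\gamma_{N-1}$ yields a curve $\gamma\subset A$ from $x_0$ to $x_1$ whose length is at most $2\sum_k\norm{y_k-y_{k+1}}<\infty$, hence rectifiable.

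The main obstacle is ensuring the partition can be chosen with the stated property, i.e. that the modulus of continuity of $P_A$ on $[x_0,x_1]$ is controlled well enough to force $\norm{y_k-y_{k+1}}$ below the threshold $\beta_L R$. This is where the H\"older continuity of the metric projection, \cite[Theorem 3.2]{ivanov2015sharp}, enters: it gives an explicit modulus of continuity for $P_A$ on the $R$-neighborhood of $A$ (in particular on the segment, which lies within distance $<R$ of $A$ when $\norm{x_0-x_1}<2R$), so a uniform partition of $[0,1]$ into $N$ pieces gives $\norm{y_k-y_{k+1}}\le C\bigl(\norm{x_0-x_1}/N\bigr)^{\theta}$ for the H\"older exponent $\theta\in(0,1]$ determined by the moduli of smoothness and convexity of $X$; choosing $N$ large makes this smaller than $\beta_L R$. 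One should also check the endpoint cases $t_0=0$, $t_N=1$: since $x_0,x_1\in A$ they are their own projections, so $y_0=x_0$ and $y_N=x_1$ automatically, and no extra argument is needed there. I would close by noting that rectifiability is all that is claimed — no bound on the length uniform in $N$ is asserted — so the crude estimate $\length{\gamma}\le 2\sum_k\norm{y_k-y_{k+1}}$ suffices and the proof is complete.
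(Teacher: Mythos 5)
Your proposal is correct and follows exactly the route the paper intends: the corollary is stated as an immediate consequence of \Href{Theorem}{thm:rectifable_curve} combined with the (H\"older) continuity of the metric projection from \cite{ivanov2015sharp}, and your chaining of projected points $y_k$ of a fine partition of $[x_0,x_1]$, followed by applying the theorem to each consecutive pair and concatenating finitely many rectifiable pieces, is precisely the argument the paper leaves implicit.
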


The rest of the paper is organized as follows.
In the next \Href{Section}{sec:notation}  we give the standard terminology related to weakly convex sets  and to the geometry of the unit ball of a Banach space. In  \Href{Section}{sec:distance_func} we study the distance function  to  a proximally smooth set  restricted to  a segment with endpoints in this set. 
In \Href{Section}{sec:curve_construction}
we describe our \Href{Algorithm}{algorithm}  for the construction of the rectifiable curve with endpoints in a proximally smooth set, and also summarize the assumptions needed to show the correctness of the algorithm. Then in \Href{Section}{sec:length_bound} we estimate the length of  curve returned by \Href{Algorithm}{algorithm} and prove
 \Href{Theorem}{thm:rectifable_curve}. In \Href{Section}{sec:inclusion} we prove \Href{Theorem}{thm:rectifable_curve_inclusion}. Finally, in the last section \Href{Section}{sec:tech_lemmas} we prove several purely technical results used in the proofs.

\section{Terminology and basic properties}
\label{sec:notation}
\subsection{Properties of the unit ball}

Let $X$ be a {\it real Banach space}, and $X^*$ be its {\it conjugate space}. 
We use $\iprod{p}{x}$ to denote the \emph{value of a functional} $p \in X^*$ \emph{at a vector} $x \in X.$
For $r > 0$ and $c \in X$ we denote by $\BF{r}{c}$ the \emph{closed ball} 
{\it with center $c$ and radius $r.$}


We will use the notation $[xy]$ for the {\it segment} with  endpoints $x$ and $y.$

Define
\begin{equation*}
\mcox{\e} = \inf \left\{ 1 - \frac{\norm{x + y}}{2} \st x,y\in \operatorname{B}_1 (0),\ \norm{x -y} \geq \e\right\}
\end{equation*}
and
\begin{equation*}
\mglx{\tau} = \sup \left\{\frac{\norm{x + y} + \norm{x - y}}{2} - 1 \st \norm{x} = 1, \norm{y} = \tau \right\}.
\end{equation*}
Functions $\mcox{\cdot} \colon [0,2] \to [0, 1]$ and $\mglx{\cdot} \colon \R^+ \to \R^+$
 are  referred to as the \emph{moduli of convexity and smoothness of} $X$, respectively.
The modulus of convexity is of \emph{power type} $s > 0$ if for some constant $C_{cv}$ inequality
\begin{equation}
\label{eq:modulus_smoothness_power_type}
\mcox{\epsilon} \geq C_{cv} \tau^s
\end{equation}
holds for any $\tau \in [0, 2).$

A Banach space $X$ is called \emph{uniformly convex} if $\mcox{
\e}>0$ for all $\e>0$, and \emph{uniformly smooth} if $\frac{\mglx{\tau}}{\tau}\to 0$ as $\tau\to 0.$
We refer the reader to the book \cite{DiestelEng}  as a comprehensive survey on these moduli and their geometric properties.

In what follows, we consider  only uniformly smooth Banach spaces. In such spaces, for any non-zero vector $x,$ 
there is a unique unit functional $p$  attaining its norm on
 $x.$  
Let $x$ be a non-zero vector of $X$ and $p$ be a unit functional attaining its norm on $x,$ we use
$H_{x}$ to denote the hyperplane $\{y \in X \st \iprod{y}{p}=0\}.$
We will say that $y$ is {\it quasi-orthogonal}  to vector $x \in X\setminus \{0\}$
and write  $y\urcorner x$
if $y \in H_x.$

Note that  the following conditions are equivalent: \\ \noindent
 -- $y$  is quasi-orthogonal to  $x$; \\ \noindent
 --  for any $\la \in \R$  vector $x+ \la y$
 lies in the supporting hyperplane to the  ball  $\BF{\norm{x}}{0}$  at $x;$ \\ \noindent
 -- for any $\la \in \R$ inequality $\norm{x + \la y} \geq \norm{x}$ holds;\\ \noindent
 --  $x$ is orthogonal to  $y$ in the sense of Birkhoff--James (see \cite[Chapter 2]{DiestelEng} and \cite{AlonsoMartiniWu_birkhoff_orthogonality}).

\subsection{Modulus of smoothness and related functions}
The modulus of smoothness of a Banach space is a strictly increasing convex function satisfying the following inequality
of Day--Nordlander type (see \cite[Chapter 3]{DiestelEng})
\begin{equation}
\label{eq:day_nord_mglx}
\sqrt{1 + \tau^2} - 1 \leq \mglx{\tau} \leq \tau
\quad \text{for all} \ \tau \in [0, + \infty).
\end{equation}

The modulus of smoothness is of \emph{power type} $s$ if for some constant $C_{sm},$  
\begin{equation}
\label{eq:modulus_smoothness_power_type}
\mglx{\tau} \leq C_{sm} \tau^s
\quad \text{for all } \ \tau \in [0, + \infty).
\end{equation}
It follows that the modulus of smoothness of a uniformly smooth Banach space  might be of power type $s$ only for some $s$
in $(1, 2].$

In our computations we will use two functions related to the modulus of smoothness of a Banach space. 

Define function $\omega_X:  [0,+\infty) \to  [0,+\infty)$ by
$$
	\omx{\tau} = \frac{\mglx{\tau}}{\tau}.
$$
Since the modulus of smoothness of a uniformly smooth Banach space is a strictly increasing convex function, we conclude that  $\omega_X(\cdot)$ is a strictly increasing function. 
Thus, the inverse function $\omxi{\cdot}$ is also strictly increasing.

The second function $\zeta_{X}^{+}: [0, + \infty)  \to [0, + \infty)$ is defined by 
\[
\zetap{\e} =  \sup\left\{\norm{x + \e y} : \,\, 
\norm{x} = \norm{y} = 1,  \, 
y \urcorner x \right\}.
\]
Thus, $\zetap{\cdot} -1$ bounds  the   \emph{deviation of a point in a supporting hyperplane from the unit ball}.
This modulus of a Banach space was studied in \cite{ivanov2017new}, where  it was shown that it is equivalent to the modulus of smoothness near zero.
\begin{prp}
\label{prop:zetap_mglx_equivalence}
	Let  $X$ be an arbitrary Banach space. Then
\[
 \mglx{\frac{\e}{2(1+\e)}} \leq \zetap{\e}  -1 \leq \mglx{2\e}, \quad  \e \in\left[0, \frac{1}{2}\right].
 \]
\end{prp}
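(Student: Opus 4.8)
The plan is to relate the quantity $\zetap{\e}-1 = \sup\{\norm{x+\e y}-1 : \norm{x}=\norm{y}=1,\ y\urcorner x\}$ to the modulus of smoothness by exploiting the characterization of quasi-orthogonality recalled in the excerpt: $y\urcorner x$ means $\norm{x+\la y}\geq\norm{x}=1$ for all $\la\in\R$, equivalently $x$ is Birkhoff--James orthogonal to $y$. The two inequalities will be obtained separately.

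For the \emph{upper bound} $\zetap{\e}-1\leq\mglx{2\e}$, I would fix unit vectors $x,y$ with $y\urcorner x$ and estimate $\norm{x+\e y}$. The idea is to symmetrize: since $\norm{x+\la y}\geq 1$ for every $\la$, in particular $\norm{x-\e y}\geq 1$, so
\[
\norm{x+\e y} \leq \frac{\norm{x+\e y}+\norm{x-\e y}}{2} + \bigl(\norm{x+\e y} - \tfrac{\norm{x+\e y}+\norm{x-\e y}}{2}\bigr),
\]
but more cleanly one writes $\norm{x+\e y} \le \norm{x+\e y} + (\norm{x-\e y}-1) = \bigl(\norm{x+\e y}+\norm{x-\e y}\bigr) - 1$, hence $\norm{x+\e y}-1 \le \norm{x+\e y}+\norm{x-\e y} - 2 = 2\bigl(\tfrac{\norm{x+\e y}+\norm{x-\e y}}{2}-1\bigr) \le 2\,\mglx{\e}$ by the definition of the modulus of smoothness with $\norm{x}=1$, $\norm{\e y}=\e$. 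This actually gives the slightly stronger bound $2\mglx{\e}$, and since $\mglx{}$ is increasing, $2\mglx{\e}\le\mglx{2\e}$ would follow if $\mglx{}$ were superadditive in the relevant sense; in any case $\mglx{2\e}\ge\mglx{\e}$, and one should check whether the paper wants exactly $\mglx{2\e}$ — the convexity of $\mglx{}$ with $\mglx{0}=0$ gives $2\mglx{\e}=2\mglx{\tfrac12\cdot 2\e}\le\mglx{2\e}$, which closes it. So the upper bound follows from convexity of the modulus of smoothness plus the trivial estimate using one quasi-orthogonality inequality.

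For the \emph{lower bound} $\mglx{\frac{\e}{2(1+\e)}}\le\zetap{\e}-1$, the direction is reversed: given the modulus of smoothness, I must \emph{produce} a nearly extremal pair $x,y$ with $y\urcorner x$ witnessing a large value of $\norm{x+\e y}$. Start from vectors $u,v$ with $\norm{u}=1$, $\norm{v}=\tau$ (with $\tau$ to be chosen of order $\e$) nearly attaining the sup in the definition of $\mglx{\tau}$, so $\tfrac{\norm{u+v}+\norm{u-v}}{2}-1$ is close to $\mglx{\tau}$. The issue is that $v$ need not be quasi-orthogonal to $u$. The standard fix is to replace $v$ by its component in the supporting hyperplane $H_u$: write $v = tw + y'$ where $w$ is a unit functional-normalizing direction and $y'\in H_u$, i.e. project $v$ onto $H_u$ along $u$; then $y'\urcorner u$ by construction, and one normalizes $y = y'/\norm{y'}$, $x=u$. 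One then checks that $\norm{u+\e y}$ is within the claimed error of $1+\mglx{\tau}$, which forces the specific argument $\frac{\e}{2(1+\e)}$ in the bound via estimates on $\norm{y'}$ (this is where the factor $2(1+\e)$ and the restriction $\e\le\tfrac12$ come from).

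The \textbf{main obstacle} is the lower bound: controlling the distortion introduced when one replaces an almost-extremal pair for $\mglx{\tau}$ by a genuinely quasi-orthogonal pair. One must bound how much $\norm{v-y'}$ and $\enorm{\,\norm{y'}-\tau\,}$ can be, and how these propagate through $\norm{u+\e y}$; the triangle inequality combined with $\norm{u+\la y'}\ge 1$ and careful bookkeeping of the scaling $\e\mapsto \e/(2(1+\e))$ yields the result. Since this is exactly the content of \cite{ivanov2017new}, I would either reproduce that argument or cite it; for completeness the cleanest route is the projection-onto-$H_u$ construction described above with explicit constants tracked through the $\e\le\tfrac12$ regime.
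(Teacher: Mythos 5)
Note first that the paper does not prove this proposition at all: it is quoted from \cite{ivanov2017new}, so there is no in-paper argument to compare against, and your plan has to be judged on its own. Your upper bound is complete and correct: quasi-orthogonality gives $\norm{x-\e y}\ge 1$, hence $\norm{x+\e y}-1\le \norm{x+\e y}+\norm{x-\e y}-2\le 2\mglx{\e}$, and $2\mglx{\e}\le \mglx{2\e}$ does follow from convexity of $\mglx{\cdot}$ together with $\mglx{0}=0$; taking the supremum over admissible pairs gives $\zetap{\e}-1\le\mglx{2\e}$ (in fact the stronger bound $2\mglx{\e}$).

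The lower bound, however, is only a sketch, and the one quantitative claim you do make is slightly off: from a nearly extremal pair $(u,v)$ for $\mglx{\tau}$ you cannot expect the single quantity $\norm{u+\e y}$ to be close to $1+\mglx{\tau}$; the modulus involves the \emph{average} of $\norm{u+v}$ and $\norm{u-v}$, so you must exploit that both $w$ and $-w$ are quasi-orthogonal to $u$. The decomposition you propose does close the gap, and no near-extremal pairs are needed — the estimate holds for every pair. Indeed, let $u$ be a unit vector, $p_u$ its norming functional, and $\norm{v}=\sigma:=\frac{\e}{2(1+\e)}$. Write $v=tu+w$ with $t=\iprod{p_u}{v}$ and $w\in H_u$, so $|t|\le\sigma$, $\norm{w}\le 2\sigma$, and $w\urcorner u$, $-w\urcorner u$. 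If $w\ne 0$,
\[
\norm{u\pm v}=(1\pm t)\norm{u\pm\tfrac{w}{1\pm t}}\le(1\pm t)\,\zetap{\tfrac{\norm{w}}{1\pm t}}\le(1\pm t)\,\zetap{\e},
\]
where the last step uses monotonicity of $\zetap{\cdot}$ and $\frac{\norm{w}}{1\pm t}\le\frac{2\sigma}{1-\sigma}=\frac{2\e}{2+\e}\le\e$ (this computation is exactly where the factor $\frac{\e}{2(1+\e)}$ comes from); the case $w=0$ is trivial. Averaging the two inequalities gives $\frac{\norm{u+v}+\norm{u-v}}{2}-1\le\zetap{\e}-1$, and taking the supremum over such $(u,v)$ yields $\mglx{\frac{\e}{2(1+\e)}}\le\zetap{\e}-1$. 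With this bookkeeping supplied (or with an explicit appeal to \cite{ivanov2017new}, which is all the paper itself does), your proposal is correct.
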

It is not hard to see that $\zeta_X^{+}$ is strictly increasing, and hence, its inverse function $\left(\zeta_X^{+}\right)^{-1}$ is well-defined and is strictly increasing.

\subsection{Weakly convex sets}

The distance from a point $x \in X$ to a set $A \subset X$ is defined as 
$$
\dist{x}{A} = \inf\limits_{a \in A} \norm{x - a}.
$$
The {\it metric projection} of a point $x $ onto a set $A$ is defined as any element of the set 
$$
	P_A(x) = \{a \in A \st \norm{a-x} = \dist{x}{A}\}.
$$
We call the set 
$
\left\{ x \in X \st 0 < \dist{x}{A} < R \right\}
$ 
the \emph{open $R$-neighborhood} of a set $A.$
\begin{dfn}
A  set $A \subset X$ is called proximally smooth 	with constant $R$ if it is closed and  the distance function
 $x \mapsto \dist{x}{A}$ is continuously differentiable on 
 the open $R$-neighborhood of $A.$ 
\end{dfn}

The geometric properties of proximally smooth sets are hidden in the definition. To clarify these geometrical properties, which are very useful in this paper,
we introduce two equivalent (in certain spaces) to the proximal smoothness properties.

\begin{prp}[\cite{IvBal2009}]
\label{prop:projection_continuity}
Let $X$ be a uniformly convex and uniformly smooth Banach space, let $A\subset X$ be a closed set,  and let $R>0.$ 
The following assertions are equivalent:
\begin{enumerate}
 \item $A$ is proximally smooth with constant $R.$ 
 \item \label{ass:equiv_pr_supp}  the projection map $x \to P_A(x)$ is single valued and continuous on the open $R$-neighborhood of $A.$
 \item\label{ass:equiv_ps_supp}  for any $u$ in the open $R$-neighborhood of $A$  and any $x \in P_A(u)$  	 inequality
 \[
 \dist{x + \frac{R}{\norm{u-x}}(u-x)}{A} \geq R
 \]
 holds.  
\end{enumerate}
\end{prp}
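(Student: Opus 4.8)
The plan is to prove the cyclic chain $(1)\Rightarrow(2)\Rightarrow(3)\Rightarrow(1)$, writing $U$ for the open $R$-neighborhood of $A$; here $(1)\Rightarrow(2)$ and $(3)\Rightarrow(1)$ are the soft directions, while $(2)\Rightarrow(3)$ is the analytic core where the geometry of $X$ is used essentially and quantitatively. For $(1)\Rightarrow(2)$: fix $u\in U$ and $x\in P_A(u)$, and set $t=\dist{u}{A}=\norm{u-x}\in(0,R)$. Moving from $u$ toward $x$ along $[ux]$ one checks directly that $\dist{\cdot}{A}$ decreases at unit rate there (for $a\in A$ one has $\norm{u_s-a}\ge\norm{u-a}-\norm{u-u_s}\ge t-s$, which matches $\norm{u_s-x}$), so $\iprod{\nabla\dist{u}{A}}{\tfrac{u-x}{t}}=1$; combined with the $1$-Lipschitz property this gives $\norm{\nabla\dist{u}{A}}=1$, so $\nabla\dist{u}{A}$ is the unique (by uniform smoothness) norming functional of $u-x$. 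Since $X$ is strictly convex, a given unit functional attains its norm at a unique unit vector, hence $u-x$ — and therefore $x$ — is determined by $\nabla\dist{u}{A}$ and $t$; thus $P_A$ is single valued on $U$ and $P_A(u)=u-\dist{u}{A}\,\sigma\!\left(\nabla\dist{u}{A}\right)$, where $\sigma$ sends a unit functional to the unit vector it norms. The map $\sigma$ is norm-to-norm continuous because $X$ is uniformly convex, $\nabla\dist{\cdot}{A}$ is continuous by $(1)$, and the distance function is continuous, so $P_A$ is continuous; this is $(2)$.

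For $(3)\Rightarrow(1)$: first, $(3)$ gives single valuedness of $P_A$ by a strict-convexity argument — if $x_0,x_1\in P_A(u)$, applying $(3)$ to $u$ and $x_1$ the point $y_1=x_1+\tfrac{R}{t}(u-x_1)$ satisfies $\dist{y_1}{A}\ge R$, so $\norm{x_0-y_1}\ge R$; but $\norm{x_0-y_1}\le\norm{x_0-u}+\norm{u-y_1}=t+(R-t)=R$, so there is equality in the triangle inequality, and by strict convexity $u$ lies on the affine segment $[x_0y_1]$ at distance $t$ from $x_0$, which forces $x_0=x_1$. For differentiability, with $x=P_A(u)$, $y=x+\tfrac{R}{t}(u-x)$ and $\dist{y}{A}=R$ (from $(3)$ and $\dist{y}{A}\le\norm{y-x}=R$), one has the sandwich
\[
R-\norm{w-y}\;\le\;\dist{w}{A}\;\le\;\norm{w-x}
\]
for all $w$ near $u$ (the lower bound because every $a\in A$ has $\norm{a-y}\ge R$), with equality at $w=u$; both bounding functions are Fréchet differentiable at $u$ by uniform smoothness (as $u\ne x,y$), and their gradients at $u$ both equal the norming functional $p$ of $u-x$, so $\dist{\cdot}{A}$ is Fréchet differentiable at $u$ with gradient $p$. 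Continuity of this gradient reduces to continuity of $u\mapsto u-P_A(u)$ together with norm-to-norm continuity of the duality map (uniform smoothness); the former I would obtain from $(3)$ by a quantitative ``thin lens'' estimate — for $u,u'$ close, the external balls $\BF{R}{y}$ and $\BF{R}{y'}$ (the latter built from $u'$ as $\BF{R}{y}$ is from $u$) both miss $\vn A$, so $P_A(u)$ and $P_A(u')$ both lie in $\BF{t+\delta}{u}\setminus\vn\BF{R}{y}$ with $\delta\asymp\norm{u-u'}$, a set whose diameter shrinks with $\delta$ at a rate controlled by the moduli of convexity and smoothness of $X$ (this is essentially the Hölder continuity of the metric projection). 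Combining, $\dist{\cdot}{A}$ is continuously differentiable on $U$, i.e. $(1)$.

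The hard part is $(2)\Rightarrow(3)$. Fix $u\in U$, put $x=P_A(u)$, $v=\tfrac{u-x}{\norm{u-x}}$, and consider $\phi(\tau)=\dist{x+\tau v}{A}$ for $\tau\in(0,R]$. One sees $\phi(\tau)=\tau$ for $\tau\le\norm{u-x}$ (for $a\in A$, $\norm{x+\tau v-a}\ge\norm{u-a}-\norm{u-(x+\tau v)}\ge\tau$), and $\{\tau:\phi(\tau)=\tau\}$ is a closed down-set, hence equals $(0,T]$ with $T\ge\norm{u-x}$; the assertion $(3)$ is precisely $T=R$. The difficulty is to rule out $T<R$: at $u_T=x+Tv$ single valuedness gives $P_A(u_T)=x$, but just beyond $T$ the points $P_A(x+\tau v)$ differ from $x$ and merely converge to $x$ (by the continuity in $(2)$), and a purely metric argument does not prevent them from drifting tangentially off the ray while $\phi(\tau)$ slips slightly below $\tau$. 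Closing this gap is exactly where uniform convexity and uniform smoothness of $X$ enter essentially: one has to show that such a tangential drift, together with $x$ being the unique nearest point of $A$ to $u_T$, is incompatible with the second-order flatness forced by the quantities $\omega_X$ and $\zeta^{+}_X$ of $X$. In a Hilbert space there is a shortcut — the integral curve of $\nabla\dist{\cdot}{A}$ through $u$ is a unit-speed curve along which the distance grows at unit rate, hence joins a point of $A$ to a point at distance $R$ by a curve of length $R$, and in a strictly convex space a curve whose length equals the distance between its endpoints is the straight segment, so this curve is $[xy]$ and $\dist{y}{A}=R$ — but in a general Banach space Peano's theorem fails and this flow need not exist, so one is forced back to the quantitative continuation argument above, which I expect to constitute the bulk of the proof.
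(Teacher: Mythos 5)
Note first that the paper does not prove this proposition at all: it is quoted verbatim from \cite{IvBal2009}, so there is no in-paper argument to compare against. Judged on its own terms, your proposal is not a proof but a proof plan with the central implication missing. The analytic core is $(2)\Rightarrow(3)$, and you say so yourself: after setting up $\phi(\tau)=\dist{x+\tau v}{A}$, observing $\{\tau:\phi(\tau)=\tau\}=(0,T]$ and reducing the claim to $T=R$, you explicitly state that ruling out $T<R$ is ``exactly where uniform convexity and uniform smoothness enter essentially,'' that the Hilbert-space gradient-flow shortcut is unavailable, and that you ``expect'' the quantitative continuation argument to constitute the bulk of the proof --- without carrying it out. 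That continuation argument \emph{is} the theorem; with it absent, the chain $(1)\Rightarrow(2)\Rightarrow(3)\Rightarrow(1)$ does not close and nothing is proved beyond the two soft directions.

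There are also gaps in the directions you do treat. Throughout you take for granted that $P_A(u)\neq\emptyset$ for every $u$ in the open $R$-neighborhood: in an infinite-dimensional space a closed set need not be proximinal, so under $(1)$ the phrase ``single valued'' in $(2)$ requires an existence argument (typically: minimizing sequences are Cauchy, via uniform convexity together with the differentiability hypothesis), and under $(3)$ your sandwich argument for Fr\'echet differentiability begins with ``$x=P_A(u)$,'' which again presupposes existence that condition $(3)$ --- being vacuous where $P_A(u)=\emptyset$ --- does not hand you for free. Finally, in $(3)\Rightarrow(1)$ the continuity of the gradient is delegated to a sketched ``thin lens'' estimate, i.e.\ to the H\"older continuity of the metric projection; that is itself a nontrivial quantitative theorem (the present paper cites \cite{ivanov2015sharp} for it), not a step one can wave through. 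The sound pieces --- the squeeze argument giving differentiability at points where the projection exists, and the strict-convexity argument for uniqueness under $(3)$ --- are standard and correct, but they are the easy part of the equivalence.
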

 Roughly speaking, the last property here implies that the set can be supported by a ball of fixed radius $R$ at a point of its boundary.

\subsection{Auxiliary geometric constructions}
\label{subsec:construction}
\begin{dfn}
Let $A$ be  proximally smooth with constant $R,$
let $x_0$ and $x_1$  be two distinct points of $A$ with $\frac{\norm{x_0 - x_1}}{R} < 2,$ 
  we say that an arbitrary point of the set 
  \[ 
  P_{A}([x_0, x_1]) \cap \left(H_{x_0 - x_1} + \frac{x_0 + x_1}{2}\right)
  \] is  a \textit{slice-projection}   of the midpoint  $\frac{x_0 + x_1}{2}$ of the segment $[x_0 x_1]$ onto $A.$ 
  \end{dfn}
 \Href{Proposition}{prop:projection_continuity} and the separation lemma imply that the slice-projection is non-empty in a uniformly smooth and uniformly convex Banach space.

Given a point $x \in X$ and a set $D \subset X,$ we denote the  cone
\[
x + \left\{ 
\sum\limits_1^n \lambda_i (s_i - x) \st 
n \in \N; \lambda_i \geq 0 \  \text{and }\ s_i \in D  
 \ \text{for all} \ i \in [n]
\right\}
\]
as
$ \mathrm{cone}(x, D).$ 
Note that $ \mathrm{cone}(x, D)$ is a convex cone for any nonempty set $D.$  

In our computations we will extensively use the following quantity, which describes in a certain way the distortion of the distance function.
By definition put
\begin{equation}\label{eq:def_Rprime}
 R^\prime(\tau, R) = R \frac{8   \mglx{\frac{{\tau}}{R}}}
{1 - 8   \omx{\frac{\tau}{R}}} =
\tau \frac{8   \omx{\frac{{\tau}}{R}}}
{1 - 8   \omx{\frac{\tau}{R}}}.
\end{equation}

\section{Distance to a proximally smooth set}
\label{sec:distance_func}
In this section we bound the distance between the midpoint of a segment with endpoints in a proximally smooth set $A$ and its slice-projection onto $A.$

\begin{lem}\label{lem:dist_to_pr_sm_set}
Let $X$ be a uniformly convex and uniformly smooth Banach space, let $R>0$ and  $A \subset X$ be a proximally smooth with constant $R$ set. Let $x_0, x_1 \in A$ with $\frac{\norm{x_0-x_1}}{R} < 2$ and $ \la
\in [0,1]$. Then the following bound on the distance from point $x_{\la} = (1 - \la)x_0 + \la x_1$ to  set $A$ holds
\begin{equation}
 \label{eq:dist_to_pr_sm_set1}
 \dist{x_\la}{A} \leq 8 R \la (1 - \la) \mglx{\frac{\norm{x_0 - x_1}}{R}}.
 \end{equation} 
\end{lem}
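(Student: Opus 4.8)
The statement bounds the distance from the chord point $x_\lambda$ to the proximally smooth set $A$. I would prove this by exploiting the ball-support property from assertion \eqref{ass:equiv_ps_supp} of \Href{Proposition}{prop:projection_continuity}: each point of $A$ admits a supporting ball of radius $R$, i.e. the complement of an open ball of radius $R$ is a ``local barrier'' for $A$ at every boundary point. Since $x_0, x_1 \in A$ and $A$ stays outside all such open $R$-balls, the chord $[x_0 x_1]$ cannot penetrate far into any single open ball of radius $R$ whose boundary passes through (or near) $x_0$ and $x_1$. The precise bound on ``how far a chord dips inside a ball of radius $R$ through its endpoints'' is exactly the kind of quantity controlled by the modulus of smoothness — this is why the factor $\mglx{\norm{x_0-x_1}/R}$ appears.

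**Main steps.** First, I would like to say: pick a ball $\BF{R}{c}$ with $x_0, x_1$ on its boundary sphere and such that $\dist{z}{A}$ for $z$ on the chord is at most $\dist{z}{\partial \BF{R}{c}}$ — but in a general Banach space one cannot simply place such a ball, and $A$ need not sit outside a single fixed ball. So the honest route is iterative/limiting: for $\lambda$ dyadic-rational (or more simply, reduce to the midpoint case $\lambda = 1/2$ and induct), one controls $\dist{x_{1/2}}{A}$ using the support ball at a near projection point, then bootstraps. Concretely, I would (i) handle $\lambda=\tfrac12$: let $y$ be a (slice-)projection of $x_{1/2}$; the ball-support inequality \eqref{ass:equiv_ps_supp} says $A$ avoids the open ball $B$ of radius $R$ through $y$ in the direction $x_{1/2}-y$; one then estimates the ``sagitta'' of the segment $[x_0 x_1]$ relative to a sphere of radius $R$ passing suitably near $x_0, x_1$, and a Day–Nordlander / \eqref{eq:day_nord_mglx}-type computation converts the chordal deviation into $\mglx{\norm{x_0-x_1}/R}$, up to the absolute constant $8$. (ii) For general $\lambda$, either reduce to the dyadic case by repeated bisection and pass to the limit using continuity of $\dist{\cdot}{A}$, absorbing the geometric-series of contributions into the same constant; or run the same sphere-sagitta estimate directly with the asymmetric weight $\lambda(1-\lambda)$ coming from the parametrization of a chord of the sphere.

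**The geometric core.** The heart of the argument is the planar/two-dimensional estimate: if a sphere of radius $R$ in a uniformly smooth Banach space passes through two points at distance $d=\norm{x_0-x_1}$ apart, then the chord between them lies within distance $\mathcal{O}(R\,\mglx{d/R})$ of the sphere, and more precisely the point at parameter $\lambda$ is within $\mathcal{O}(R\,\lambda(1-\lambda)\,\mglx{d/R})$. This is where the modulus of smoothness enters honestly: in Hilbert space the sagitta is $R - \sqrt{R^2 - (d/2)^2} \approx d^2/(8R)$ matching $\mglx{\tau}\approx \tau^2/2$, and in a general space the one-sided bound $\mglx{\tau}\le \tau$ and the convexity of $\mglx{\cdot}$ give the analogous control with a worse constant. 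I expect this geometric lemma — reconciling the distance-to-set bound with the modulus of smoothness with the clean constant $8$ — to be the main obstacle; everything else (using \eqref{ass:equiv_ps_supp}, existence of projections via uniform convexity, continuity of the distance function, the bisection/limiting argument) is routine. Note also the bound is consistent with the definition \eqref{eq:def_Rprime} of $R^\prime(\tau,R)$, which is presumably assembled precisely from iterating this lemma, so I would phrase the proof to feed cleanly into that quantity.
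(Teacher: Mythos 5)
You have the right raw ingredients (the support--ball assertion of \Href{Proposition}{prop:projection_continuity} and the idea that the modulus of smoothness controls how deep a chord with endpoints outside a ball of radius $R$ can dip into it), but the quantitative core is left unproven, and the pointer you give for it is the wrong tool. The paper's proof is a direct, one-shot argument for each fixed $\la$, with no bisection and no limit: take $y \in P_A(x_\la)$, apply the support--ball property at $y$ to get $\norm{c - x_i} \ge R$ for $c = y + R\frac{x_\la - y}{\norm{x_\la - y}}$ and $i=0,1$ (this is where one uses that $x_0,x_1\in A$ lie outside the open ball of radius $R$ about $c$), then let $p$ be the norming functional of $x_\la - y$ and apply the \emph{definition} of $\mglx{\cdot}$ to the pair $\frac{x_\la-y}{\norm{x_\la-y}}$, $\frac{y-x_0}{R}$, obtaining $\iprod{p}{x_0 - y} \leq 2R\,\mglx{\frac{\norm{x_0-y}}{R}}$ and similarly for $x_1$; the bound $\norm{y-x_i}\le \min\{2\la,1\}\norm{x_0-x_1}$ (from $y$ being a nearest point), the identity $\dist{x_\la}{A}=\iprod{p}{x_\la-y}$, the convex combination with weights $1-\la,\la$, and the convexity of $\mglx{\cdot}$ then produce exactly the factor $8R\la(1-\la)$. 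Your ``Day--Nordlander type computation'' does not deliver this step: inequality \eqref{eq:day_nord_mglx} only compares $\mglx{\cdot}$ with the Hilbertian modulus, whereas converting the chordal penetration into $\mglx{\cdot}$ requires the norming-functional argument above, and getting $\la(1-\la)$ rather than a midpoint-only bound requires the convexity of $\mglx{\cdot}$. Your framing of the geometric core also misdescribes the configuration: the support sphere passes through $y=P_A(x_\la)$, not through or near $x_0,x_1$; one only knows $x_0,x_1$ avoid the open ball, and this already gives the needed configuration for \emph{every} $\la$, so the ``placement'' worry that drives you to bisection does not arise.

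The fallback you actually propose --- prove the midpoint case, then handle general $\la$ by dyadic bisection and a limiting argument ``absorbing the geometric series into the same constant'' --- is a genuine gap. After one bisection the relevant chords are $[x_0\, y_{1/2}]$ and $[y_{1/2}\, x_1]$ with $y_{1/2}$ a projection point, not sub-chords of $[x_0x_1]$, so transporting the estimate back to $x_\la$ costs an extra error term at every scale; making that series converge requires a smallness assumption of the type $\frac{\norm{x_0-x_1}}{R} < \beta$ (this is precisely what assumptions like $\mu<2$ buy later in the paper), whereas the lemma is asserted for the full range $\frac{\norm{x_0-x_1}}{R} < 2$, and neither the constant $8$ nor the precise weight $\la(1-\la)$ would survive such an accounting without substantial extra work. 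Note also that the logical order is reversed: this lemma is the elementary one-step estimate from which the iterative construction and the quantity $R^\prime(\tau,R)$ of \eqref{eq:def_Rprime} are assembled, so proving it by iteration both complicates it and makes the presentation dangerously close to circular.
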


\begin{proof}
\Href{Proposition}{prop:projection_continuity} implies that $P_A(x_\la)$ is nonempty for all $\la \in [0,1].$ Fix an arbitrary $\la\in (0,1)$ and consider $y\in P_A(x_\la)$.
Using assertion \ref{ass:equiv_ps_supp} of \Href{Proposition}{prop:projection_continuity}, we get that $\dist{y+R\frac{x_\la-y}{\norm{x_\la-y}}}{A}\ge R$. 
Hence $\norm{y+R\frac{x_\la-y}{\norm{x_\la-y}}-x_0}\ge R$,
and   $\norm{\frac{x_\la-y}{\norm{x_\la-y}}-\frac{y-x_0}{R}}\ge 1$. By the definition of the modulus of smoothness, we get that 
$$2\mglx{\norm{\frac{y-x_0}{R}}}\ge \norm{\frac{x_\la-y}{\norm{x_\la-y}}+\frac{y-x_0}{R}}+ \norm{\frac{x_\la-y}{\norm{x_\la-y}}-\frac{y-x_0}{R}}-2\ge$$
$$\norm{\frac{x_\la-y}{\norm{x_\la-y}}+\frac{y-x_0}{R}}-1.$$

Let $p$ be the unit functional attaining its norm on vector 
$x_{\la}-y.$ Then

$$\norm{\frac{x_\la-y}{\norm{x_\la-y}}+\frac{y-x_0}{R}}-1\ge\iprod{p}{\frac{x_\la-y}{\norm{x_\la-y}}+\frac{y-x_0}{R}}-1=\frac{1}{R}\iprod{p}{y-x_0}.$$

Therefore, we obtain that
\begin{equation} \label{eq:dist_y_eps}
\iprod{p}{x_0 - y} \leq 2R \mglx{
\frac{\norm{x_0 - y}}{R} }.
\end{equation}

Since $y\in P_{A}(x_\la)$ and $x_0, x_1 \in A$, 
\[
\norm{y-x_{\la}}=\dist{x_{\la}}{A} \leq 
\min\{\norm{x_{\la}-x_{0}},\norm{x_{\la}-x_{1}} \} \leq
\]
\[
\min\{\norm{x_{\la}-x_{0}},\norm{x_{\la}-x_{1}}\} \leq 
\min{\{\la, (1 - \la)\}} \norm{x_0 - x_1}.
\]
Therefore,
\[
\norm{y-x_0}  \leq  
\norm{y- x_{\la}}  + \norm{x_{\la} - x_0 }  
\leq \min\{\!2\la, 1\!\} \norm{x_0  - x_1}.
\]
This, the monotonicity of the modulus of smoothness and inequality  \eqref{eq:dist_y_eps} yield  inequality
\begin{equation}\label{eq:Phi_e_lambda}
\iprod{p}{x_0 - y} \leq \Phi(\la),
\end{equation}
 where
\begin{equation}\label{eq:Phi_of_mglx}
\Phi(\la)=2R \mglx{\frac{\min{\{2\la, 1\}}
\norm{x_0 - x_1}}{R}}.
\end{equation}
Similarly, 
\begin{equation}\label{eq:iprod_less_Phi}
\iprod{p}{x_1 - y} \leq \Phi(1-\la).
\end{equation}
Multiplying inequalities  \eqref{eq:Phi_e_lambda} and \eqref{eq:iprod_less_Phi} ­  by $(1-\la)$ and ­ 
$\la$ respectively, and then summing them, one has
$$
\iprod{p}{x_\la - y} \leq (1-\la)\Phi(\la) +
\la\Phi(1-\la).
$$
This and the inequality
$\dist{x_{\la}}{A} \leq \norm{x_{\la}-y} =
\iprod{p}{x_\la - y}$ imply that
\begin{equation}\label{eq:dist_less_Phi}
\dist{x_{\la}}{A} \le (1-\la)\Phi(\la) + \la\Phi(1-\la).
\end{equation}

Using once more the convexity of function $\mglx{\cdot}$,   and the identities 
$\mglx{0}=0$ and \eqref{eq:Phi_of_mglx}, we get that
\[
\Phi(\la)\le 2R\min\{2\la, 1\} \mglx{\frac{\norm{x_0 -x_1}}{R}} \leq 
4\la R \mglx{\frac{\norm{x_0 - x_1}}{R}}.
\]
Thus, inequality \eqref{eq:dist_less_Phi} implies that
$$
\dist{x_{\la}}{A} \leq 8R \la (1-\la) \mglx{\frac{\norm{x_0 - x_1}}{R}}.
$$ 
\end{proof}

\Href{Proposition}{prop:projection_continuity} and the separation lemma imply  the following.

\begin{lem}\label{lem:slice_proj_existence}
Let $X$ be a uniformly convex and uniformly smooth  Banach space, let $A \subset X$ be a proximally smooth with constant $R$ set. 
 Then  for any    $x_0, x_1 \in A$ with   $ \frac{\norm{x_0 - x_1}}{R} <  2,$ there exists a slice-projection of the midpoint of $[x_0x_1]$ onto $A.$
\end{lem}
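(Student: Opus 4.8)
The plan is to show that the set $P_A([x_0,x_1]) \cap (H_{x_0-x_1} + \tfrac{x_0+x_1}{2})$ is nonempty by a continuity/intermediate-value argument along the segment. Write $x_\lambda = (1-\lambda)x_0 + \lambda x_1$ for $\lambda \in [0,1]$. By assertion~\eqref{ass:equiv_pr_supp} of \Href{Proposition}{prop:projection_continuity}, since $\tfrac{\norm{x_0-x_1}}{R} < 2$ and each $x_\lambda$ lies in the closed segment between two points of $A$, we have $\dist{x_\lambda}{A} \leq \tfrac12\norm{x_0-x_1} < R$, so $x_\lambda$ lies in the open $R$-neighborhood of $A$ (or on $A$ itself); hence $P_A(x_\lambda)$ is a single point, and the map $\lambda \mapsto P_A(x_\lambda)$ is continuous on $[0,1]$ (using \Href{Lemma}{lem:dist_to_pr_sm_set} to rule out the degenerate possibility that some $x_\lambda \in A$ only at the endpoints; in fact if $x_\lambda \in A$ for an interior $\lambda$ the statement is easy, see below).

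First I would handle the easy degenerate case: if $x_\lambda \in A$ for some $\lambda \in (0,1)$, then $P_A(x_\lambda) = \{x_\lambda\}$ and $x_\lambda$ lies on the segment, so one still needs its projection to land in the correct hyperplane — this is not automatic, so instead I will run the main argument uniformly. Let $p \in X^*$ be the unit functional attaining its norm on $x_1 - x_0$ (unique by uniform smoothness), and consider the real-valued function
\[
f(\lambda) = \iprod{p}{P_A(x_\lambda) - \tfrac{x_0+x_1}{2}}.
\]
This $f$ is continuous on $[0,1]$ by continuity of the metric projection. The slice-projection is nonempty precisely when $f(\lambda) = 0$ for some $\lambda$, since $P_A(x_\lambda) \in H_{x_0-x_1} + \tfrac{x_0+x_1}{2}$ is equivalent to $\iprod{p}{P_A(x_\lambda) - \tfrac{x_0+x_1}{2}} = 0$. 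At the endpoints, $P_A(x_0) = x_0$ and $P_A(x_1) = x_1$, so $f(0) = \iprod{p}{x_0 - \tfrac{x_0+x_1}{2}} = -\tfrac12\iprod{p}{x_1-x_0} = -\tfrac12\norm{x_1-x_0} < 0$ and $f(1) = +\tfrac12\norm{x_1-x_0} > 0$. By the intermediate value theorem there is $\lambda^* \in (0,1)$ with $f(\lambda^*) = 0$, and then $P_A(x_{\lambda^*})$ is the desired slice-projection.

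The main (and essentially only) obstacle is justifying that the metric projection $\lambda \mapsto P_A(x_\lambda)$ is genuinely continuous up to and including the endpoints $\lambda = 0, 1$, where $\dist{x_\lambda}{A} \to 0$; this is exactly where the equivalence in \Href{Proposition}{prop:projection_continuity} (single-valuedness and continuity of $P_A$ on the whole open $R$-neighborhood, together with $P_A(x_0)=x_0$, $P_A(x_1)=x_1$) is invoked, and where uniform convexity of $X$ is used indirectly. Everything else is the elementary observation that $f$ changes sign. I would also remark that the argument does not require $x_0 \neq x_1$ to be stated separately since the hypothesis $\tfrac{\norm{x_0-x_1}}{R} < 2$ with distinctness is part of the definition of slice-projection; when $x_0 = x_1$ the claim is vacuous or trivial.
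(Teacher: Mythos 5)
Your argument is correct and is essentially the paper's intended proof: the paper merely remarks that \Href{Proposition}{prop:projection_continuity} and the separation lemma give the existence, and your intermediate-value argument with the norming functional of $x_1-x_0$ is exactly that separation argument made explicit (continuity of $\lambda\mapsto P_A(x_\lambda)$ along the segment plus a sign change of $f$). The one point you flag but should simply write out is continuity at parameters where $x_\lambda\in A$ (in particular the endpoints), which is not literally covered by the Proposition since the open $R$-neighborhood excludes $A$; it follows from the elementary estimate $\norm{P_A(x_\mu)-x_\lambda}\leq \dist{x_\mu}{A}+\norm{x_\mu-x_\lambda}\leq 2\norm{x_\mu-x_\lambda}$.
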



\begin{lem}\label{lem:weakly_convex_sausage_waist}
Let $X$ be a uniformly convex and uniformly smooth  Banach space, let   $A \subset X$ be a proximally smooth with constant $R$ set.
Let $x_0, x_1 \in A$ with   $\frac{\norm{x_0 - x_1}}{R} <   \omxi{1/8},$ the following inclusion holds
\[
P_A([x_0x_1]) \subset 
\conv\left\{x_0, \; \BF{r^{\prime}}{\frac{x_0 + x_1}{2}} \cap 
\left(H_{x_1 - x_0} + \frac{x_0 + x_1}{2}\right),\; x_1 \right\}, 
\]
where
$ 
r^\prime = R^\prime(\norm{x_0 - x_1}, R)
$  (see \eqref{eq:def_Rprime}).

\end{lem}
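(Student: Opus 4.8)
The goal is to show that every metric projection of a point on the segment $[x_0x_1]$ lies in the convex hull of $x_0$, $x_1$, and a "waist" slice of a ball around the midpoint. I would argue pointwise: fix $\lambda \in [0,1]$, set $x_\lambda = (1-\lambda)x_0 + \lambda x_1$, pick $y \in P_A(x_\lambda)$, and produce an explicit representation $y = (1-\mu)x_0 + \mu x_1 + z$ with $z \in H_{x_1-x_0}$ and $\mu \in [0,1]$, then bound $\norm{z + \tfrac{(\mu - 1/2)}{\cdot}(x_1-x_0)}$... actually, more simply: decompose $y - \tfrac{x_0+x_1}{2}$ into its component along $x_1 - x_0$ and a component in the hyperplane $H_{x_1-x_0}$, and control the hyperplane component by $r' = R'(\norm{x_0-x_1},R)$. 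The cleanest route is to write $y$ as a convex combination of $x_0$, $x_1$, and a point of the form $\tfrac{x_0+x_1}{2} + w$ with $w \urcorner (x_1-x_0)$, so the inclusion reduces to showing $\norm{w} \le r'$.

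First I would extract the key linear estimate from the proof of \Href{Lemma}{lem:dist_to_pr_sm_set}: with $p$ the unit functional attaining its norm on $x_\lambda - y$, inequality \eqref{eq:dist_y_eps} (and its analogue with $x_1$) gives $\iprod{p}{x_0 - y} \le 2R\,\mglx{\norm{x_0-y}/R}$ and $\iprod{p}{x_1-y} \le 2R\,\mglx{\norm{x_1-y}/R}$, while \Href{Lemma}{lem:dist_to_pr_sm_set} itself bounds $\norm{y - x_\lambda} = \dist{x_\lambda}{A}$. Combining these with the elementary bounds $\norm{x_0 - y}, \norm{x_1-y} \le \norm{x_0-x_1}$ (which hold under the hypothesis since $\dist{x_\lambda}{A} \le \min\{\lambda,1-\lambda\}\norm{x_0-x_1}$, hence $\norm{x_i - y} \le 2\min\{\lambda,1-\lambda\}\cdot\text{something}$, but in any case $\le \norm{x_0-x_1}$), one gets that both $\iprod{p}{x_0-y}$ and $\iprod{p}{x_1-y}$ are at most $2R\,\mglx{\norm{x_0-x_1}/R}$. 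The point is that $p$ is "almost orthogonal" to the direction $x_1 - x_0$: since $\iprod{p}{x_1-x_0} = \iprod{p}{x_1-y} - \iprod{p}{x_0-y}$ and both terms are small and of opposite sign in the relevant combination, $\enorm{\iprod{p}{x_1-x_0}}$ is controlled.

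Next, I would set up the projection of $y$ along $x_1 - x_0$ onto the central slice. Write $y = c + t(x_1-x_0)/\norm{x_1-x_0} + u$ where $c = \tfrac{x_0+x_1}{2}$, $t \in \R$, and I want a decomposition respecting the quasi-orthogonality structure. The honest move: there is a point $q \in [x_0x_1]$ (namely the appropriate affine projection of $y$) and a residual vector $w$ with $y = q + w$; to land inside the stated convex hull I need $w$, once transported to the central slice, to have norm $\le r'$. The norm of $w$ is exactly $\dist{y}{[x_0x_1]}$-type quantity, which by the triangle inequality is at most $\norm{y - x_\lambda}$ plus a correction — but cruder: $\norm{y - q} \le \norm{y - x_\lambda} + \norm{x_\lambda - q}$, and I'd choose $q$ cleverly. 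Alternatively, and this is what I expect the authors do, one shows directly that $y \in \mathrm{cone}(x_0, \text{slice}) \cap \mathrm{cone}(x_1, \text{slice})$ by using the support inequality twice (once with $x_0$, once with $x_1$ playing the role of the supporting-ball center direction), which forces $y$ into the bounded region; the radius $r'$ emerges from solving the resulting inequality $\norm{w} \le 8\,\mglx{\norm{x_0-x_1}/R}\,R + 8\,\omx{\norm{x_0-x_1}/R}\norm{w}$ for $\norm{w}$, giving exactly the formula \eqref{eq:def_Rprime}. The hypothesis $\norm{x_0-x_1}/R < \omxi{1/8}$ is precisely what makes $8\,\omx{\norm{x_0-x_1}/R} < 1$, so the denominator is positive and the bound is meaningful.

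\textbf{Main obstacle.} The delicate part is the self-referential inequality: the bound on $\norm{w}$ (the deviation of $y$ from the segment) involves $\norm{y - x_0}$ or $\norm{y - x_1}$ inside the modulus of smoothness, and these in turn depend on $\norm{w}$. One must bootstrap: first get a crude bound $\norm{y - x_i} \le \norm{x_0-x_1}$, feed it into \eqref{eq:dist_y_eps} to get a first estimate on the hyperplane component, then observe that the quasi-orthogonal deviation picks up a factor $\omx{\cdot}$ times $\norm{w}$ from the definition of $\zeta_X^+$ (or directly from convexity of $\mglx{\cdot}$ and homogeneity), and finally solve for $\norm{w}$. Getting the constant $8$ to propagate correctly and verifying that the affine combination coefficients $\mu \in [0,1]$ (so that $y$ genuinely lies in the convex hull, not merely the affine hull of the three pieces) is the bookkeeping that requires care; the sign/convexity argument showing $\mu \in [0,1]$ should follow from $\iprod{p}{x_0 - y} \ge 0$ and $\iprod{p}{x_1-y}\ge 0$ together with $p$ nearly annihilating $x_1-x_0$, but this needs to be spelled out.
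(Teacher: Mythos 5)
Your overall plan---use only the distance bound of Lemma~\ref{lem:dist_to_pr_sm_set} and then transport the quasi-orthogonal deviation of a projection point to the central slice, the denominator $1-8\omx{\cdot}$ being controlled by the hypothesis $\frac{\norm{x_0-x_1}}{R}<\omxi{1/8}$---has the right shape and is close in spirit to the paper. The paper encloses $P_A([x_0x_1])$ in the ``sausage'' $\bigcup_{\la\in[0,1]}\BF{8R\la(1-\la)\mglx{\norm{x_0-x_1}/R}}{x_\la}$ and then runs a purely two-dimensional similarity argument in $\Lin\{y,x_1-x_0\}$ for a quasi-orthogonal unit vector $y$: since $x_1$ lies outside each ball (this is exactly where $\omx{\cdot}\le 1/8$ enters), the supporting line from $x_1$ to $\BF{r}{x_\la}$ meets the central line $\frac{x_0+x_1}{2}+\Lin\{y\}$ at distance at most $\frac{r}{\la-r/\norm{x_0-x_1}}\le r^{\prime}$ from the midpoint. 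No dual functionals beyond the statement of Lemma~\ref{lem:dist_to_pr_sm_set} are needed.

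As written, though, your proposal has a genuine gap: the decisive quantitative step is announced rather than proved, and the mechanism you propose for it is off in two ways. First, the ingredients you want to re-extract from the proof of Lemma~\ref{lem:dist_to_pr_sm_set}---the functional $p$, near-annihilation of $x_1-x_0$ by $p$, and the sign conditions $\iprod{p}{x_0-y}\ge 0$, $\iprod{p}{x_1-y}\ge 0$---are neither needed nor justified; the identity $(1-\la)\iprod{p}{x_0-y}+\la\iprod{p}{x_1-y}=\norm{x_\la-y}$ does not prevent one of the two terms from being negative when $\la$ is near $0$ or $1$. Second, your ``self-referential'' inequality $\norm{w}\le 8R\,\mglx{\cdot}+8\,\omx{\cdot}\norm{w}$ misattributes the origin of the denominator: it is not a bootstrap of $\norm{w}$ through $\zeta^{+}_X$, but the similarity (cone) amplification caused by the longitudinal shift of $y$ along the segment, since the affine parameter of $y$ can slide toward an endpoint by up to $\norm{y-x_\la}/\norm{x_0-x_1}$ before the deviation is rescaled to the central slice. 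Most importantly, your sketch never uses the factor $\la(1-\la)$ in $\dist{x_\la}{A}\le 8R\la(1-\la)\mglx{\norm{x_0-x_1}/R}$; this factor is essential, because the admissible quasi-orthogonal deviation inside $\conv\left\{x_0,\;\BF{r^{\prime}}{\frac{x_0+x_1}{2}}\cap\left(H_{x_1-x_0}+\frac{x_0+x_1}{2}\right),\;x_1\right\}$ at affine parameter $t$ shrinks like $2\min\{t,1-t\}r^{\prime}$ near the endpoints, so a deviation bound merely uniform in $\la$ cannot give the inclusion. A corrected pointwise version of your plan does work: write $y=x_t+w$ with $w\urcorner(x_1-x_0)$, observe $\norm{w}\le 2\norm{y-x_\la}$ (Birkhoff--James orthogonality) and $|t-\la|\le\norm{y-x_\la}/\norm{x_0-x_1}$, and verify $\norm{w}\le 2\min\{t,1-t\}r^{\prime}$ using the full $\la(1-\la)$ bound---but this is precisely the bookkeeping you left open, and it is the content of the paper's similarity computation.
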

\begin{proof}
We fix a unit vector $y$ quasi-orthogonal to $x_1 - x_0$ 
and consider the two-dimensional space $X_2 = \Lin\{y, x_1 - x_0\}$ with the induced norm. Fix $\la \in (0,1)$ and set  $r =  8R \la (1-\la) \mglx{\frac{\norm{x_0 - x_1}}{R}}$ and $x_\la = \la x_0 + (1-\la)x_1.$ Note that \Href{Lemma}{lem:dist_to_pr_sm_set} implies that $P_A([x_0,x_1])\subset \bigcup\limits_{\la\in[0,1]} \BF{r}{x_\la}$.

First, we will show that $ x_1 \notin \BF{r}{x_\la}.$
That is,  we need to verify the following inequality  $r < \norm{x_\la - x_1} = \la \norm{x_1 - x_0},$ which is equivalent to
\[
 8 (1-\la) \frac{R}{\norm{x_0 - x_1}} \mglx{\frac{\norm{x_0 - x_1}}{R}} < 1.
\]
Since $1 - \la \in (0,1),$ this inequality holds whenever $\omx{\frac{\norm{x_0 - x_1}}{R}} \leq 1/8.$
Thus,  $ x_1 \notin \BF{r}{x_\la}.$

Denote the intersection point  of ray $x_\la x_1$ with the boundary of the ball $\BF{r}{x_\la}$ by $v$ and let $\ell$ be one of the two lines  passing through $x_1$ supporting $\BF{r}{x_\la}$.  The tangent point of $\ell$ and  $\BF{r}{x_\la}$ is denoted by $w.$  Note that $y$ is the directional vector of the line supporting $\BF{r}{x_\la}$ at $v.$ Therefore, the lines $\ell$ and $x_{1/2} + \Lin\{y\}$ are not parallel and their intersection point, denoted by $z,$ lies in the same half-plane with the point $w.$  
By similarity, it suffices to set $r^\prime$ equal  to any upper bound on $\norm{x_{1/2} - z}$ that does not depend on $\la$ and $y.$ 

Let us estimate $\norm{x_{1/2} - z}.$ Denote the intersection point of the ray $x_1 x_\la$ and the line $w + \Lin \{y\}$ by $v'.$ By similarity,
we get
\begin{equation}
\label{eq:similarity_waist_of_sauasage}
\norm{x_{1/2} - z} = \frac{1}{2} \norm{x_0 - x_1} \frac{\norm{w - v'}}{\norm{v' - x_1}}.
\end{equation}
Since $y \urcorner (x_1 - x_0),$ we have that $v' \in \BF{r}{x_\la}.$
Hence, we get 
\[
 \norm{w - v'} \leq 2r \quad \text{and} \quad \norm{v' - x_1} \geq \norm{x_\la - x_1} - r = \la \norm{x_1 - x_0} - r.
\] 
Combining these inequalities with inequality \eqref{eq:similarity_waist_of_sauasage}, we get
\[
\norm{x_{1/2} - z} =  \frac{r}{ \la - \frac{r}{\norm{x_0 - x_1}}} =
\frac{8 R (1 - \la) \mglx{\frac{\norm{x_0 - x_1}}{R}}}
{1 - 8  (1 - \la) \omx{\frac{\norm{x_0 - x_1}}{R}}} < 
\frac{8 R  \mglx{\frac{\norm{x_0 - x_1}}{R}}}
{1 - 8   \omx{\frac{\norm{x_0 - x_1}}{R}}} = r^{\prime}.
\]
This completes the proof.
\end{proof}

As an immediate corollary, we get.
\begin{cor}\label{cor:waist_distance_weak_c} 
Let $X$ be a uniformly convex and uniformly smooth  Banach space, let $A \subset X$ be a proximally smooth with constant $R$ set. 
Let  $x_0, x_1 \in A$ with   $\frac{\norm{x_0 - x_1}}{R} < \omxi{1/8},$   fix  $\la \in [0,1]$ and set $x_\la = \la x_0 + (1 - \la)x_1.$ Then there is a point $z_\la \in P_A ([x_0,x_1])$ such that  
\[\norm{z_\la - x_{\la}} \leq 4\la(1-\la) R^\prime (\norm{x_0 -x_1}, R)
\] and
 $(z_\la - x_\la) \urcorner (x_1 - x_0).$ Moreover, 
 the distance between $ \frac{x_0+x_1}{2}$ and any point of the slice projection of the midpoint  
 \( \frac{x_0+x_1}{2} \) of the segment $[x_0, x_1]$ onto $A$
 is at most $R^\prime (\norm{x_0 -x_1}, R).$
\end{cor}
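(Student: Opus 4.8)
The plan is to read the statement off \Href{Lemma}{lem:weakly_convex_sausage_waist} once, for each $\la$, a point of $P_A([x_0,x_1])$ has been produced in the affine hyperplane through $x_\la$ parallel to $H_{x_1-x_0}$. Write $L=\norm{x_0-x_1}$, let $p$ be the unit functional attaining its norm on $x_1-x_0$ (so $H_{x_1-x_0}=\{y\st\iprod{p}{y}=0\}$ and $\iprod{p}{x_1-x_0}=L$), and normalize $\iprod{p}{x_0}=0$, $\iprod{p}{x_1}=L$; then $\iprod{p}{x_\la}=(1-\la)L$, and $H_{x_1-x_0}+x_\la=\{y\st\iprod{p}{y}=(1-\la)L\}$.

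First I would produce $z_\la$. Since $\tfrac LR<\omxi{1/8}$ and $\omega_X$ is increasing, $\mglx{L/R}=\tfrac LR\,\omx{L/R}<\tfrac18$, so by \Href{Lemma}{lem:dist_to_pr_sm_set} every point of $[x_0,x_1]$ lies within distance $R/4$ of $A$; hence $P_A$ is single-valued and continuous on $[x_0,x_1]$ (\Href{Proposition}{prop:projection_continuity}, plus the standard argument at points of the segment that belong to $A$). The scalar function $s\mapsto\iprod{p}{P_A((1-s)x_0+sx_1)}$ is then continuous on $[0,1]$ and runs from $0$ to $L$, so by the intermediate value theorem there is $s^{*}$ with $\iprod{p}{P_A((1-s^{*})x_0+s^{*}x_1)}=(1-\la)L$; I set $z_\la:=P_A((1-s^{*})x_0+s^{*}x_1)\in P_A([x_0,x_1])$, which by construction satisfies $\iprod{p}{z_\la-x_\la}=0$, i.e.\ $(z_\la-x_\la)\urcorner(x_1-x_0)$. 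This is just \Href{Lemma}{lem:slice_proj_existence} carried out for a hyperplane transverse to $[x_0,x_1]$.

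Next I would bound $\norm{z_\la-x_\la}$ from \Href{Lemma}{lem:weakly_convex_sausage_waist}: $z_\la\in\conv\{x_0,D,x_1\}$ with $D=\BF{r'}{\frac{x_0+x_1}{2}}\cap\bigl(H_{x_1-x_0}+\frac{x_0+x_1}{2}\bigr)$ and $r'=R^\prime(L,R)$. By convexity of $D$, write $z_\la=\alpha x_0+\beta d+\gamma x_1$ with $d\in D$, $\alpha,\beta,\gamma\ge0$, $\alpha+\beta+\gamma=1$ (if $\beta=0$ then $z_\la\in[x_0,x_1]$ has $p$-coordinate $(1-\la)L$, forcing $z_\la=x_\la$, and there is nothing to prove). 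Every point of $D$ has $p$-coordinate $L/2$, so $\iprod{p}{z_\la}=(1-\la)L$ gives $\tfrac\beta2+\gamma=1-\la$, hence $\alpha=\la-\tfrac\beta2$; non-negativity of $\alpha$ and $\gamma$ yields $\beta\le2\min\{\la,1-\la\}\le4\la(1-\la)$. Finally the affine combination collapses, $z_\la-x_\la=(\alpha-\la)x_0+\beta d+(\gamma-(1-\la))x_1=\beta\bigl(d-\tfrac{x_0+x_1}{2}\bigr)$, so $\norm{z_\la-x_\la}=\beta\,\bigl\|d-\tfrac{x_0+x_1}{2}\bigr\|\le\beta r'\le4\la(1-\la)\,R^\prime(L,R)$. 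For the \emph{moreover} part, an arbitrary slice-projection point $z$ of $\tfrac{x_0+x_1}{2}$ lies in $P_A([x_0,x_1])$ with $\iprod{p}{z}=L/2$ (using $H_{x_0-x_1}=H_{x_1-x_0}$), so the same computation with $\la=\tfrac12$, now applied to this $z$, gives $z-\tfrac{x_0+x_1}{2}=\beta(d-\tfrac{x_0+x_1}{2})$ with $\beta\le1$, hence $\norm{z-\tfrac{x_0+x_1}{2}}\le r'=R^\prime(L,R)$.

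I do not expect a genuine obstacle here: this is a corollary of \Href{Lemma}{lem:weakly_convex_sausage_waist}. The two places that ask for a little care are the non-emptiness step (which merely repeats the separation/continuity argument of \Href{Lemma}{lem:slice_proj_existence} for a hyperplane not through the midpoint) and the coefficient bookkeeping --- in particular recognizing that the weight $\beta$ on the slice $D$ is controlled by $2\min\{\la,1-\la\}$ and that $2\min\{\la,1-\la\}\le4\la(1-\la)$, which converts the minimum in the naive estimate into the product $\la(1-\la)$ appearing in the statement.
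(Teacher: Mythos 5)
Your argument is correct and follows exactly the route the paper intends: the corollary is stated as an immediate consequence of Lemma~\ref{lem:weakly_convex_sausage_waist}, and you obtain it from that inclusion plus the continuity/intermediate-value argument for the projection along the segment (as in Lemma~\ref{lem:slice_proj_existence}), with the convex-combination bookkeeping $\beta\le 2\min\{\la,1-\la\}\le 4\la(1-\la)$ making the bound explicit. Nothing essential differs from the paper's (implicit) proof.
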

\section{Construction of a curve}
\label{sec:curve_construction}
\subsection{Assumptions on the distance between the endpoints}
Our algorithm of curve construction between two distinct points $x_0$ and $x_1$ of a proximally smooth set works  when points are sufficiently close.  Moreover, we need different bounds to prove the convergence of the algorithm and, for example, to prove the inclusion in \Href{Theorem}{thm:rectifable_curve_inclusion}.
We have decided to collect all the assumptions on the distance between the two starting points.

By definition put
\begin{equation}
\label{eq:mu}
\mu = 
\zetap{\frac{2 R^\prime(\norm{x_0 - x_1}, R)}{\norm{x_0 - x_1}}}
\end{equation}

and recall the definition of $R^\prime(\tau,R)$ (see \eqref{eq:def_Rprime}).

Assumptions on $\frac{\norm{x_0 - x_1}}{R}$:
\begin{enumerate}
\item\label{assump:1} $\frac{\norm{x_0 - x_1}}{R} < \omxi{1/8}.$
\item\label{assump:2}  $\mu < 2.$
\item\label{assump:3} $\frac{\mu^s}{2^{s-1}} < 1.$
\end{enumerate}

In the next lemma, we show that all these assumptions are fulfilled for a sufficiently small $\frac{\norm{x_0 - x_1}}{R}.$
\begin{lem}
\label{lem:small_mu}
Function
$
\zetap{\frac{2 R'(\tau, R)}{\tau}}  
$
is increasing  in $\tau$ on 
$\left[0, R \cdot \omxi{1/8}\right),$ and 
$ \mu \to 1$ as $\frac{\norm{x_0 -x_1}}{R} \to 0.$
\end{lem}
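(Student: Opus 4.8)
The plan is to reduce everything to the elementary behavior of the building blocks $\omx{\cdot}$, $\mglx{\cdot}$, $\zetap{\cdot}$ and the composite quantity $R'(\tau,R)$ near zero. First I would rewrite $R'(\tau,R)$ in the normalized variable $t = \tau/R \in [0,\omxi{1/8})$, using the second form in \eqref{eq:def_Rprime}, namely $R'(\tau,R) = \tau\cdot\dfrac{8\,\omx{t}}{1 - 8\,\omx{t}}$. Hence
\[
\frac{2R'(\tau,R)}{\tau} = \frac{16\,\omx{t}}{1 - 8\,\omx{t}},
\]
which depends on $\tau$ (for fixed $R$) only through $t$, and is manifestly a composition of increasing functions of $t$ on the stated interval: $t \mapsto \omx{t}$ is strictly increasing (stated in the excerpt, since $\mglx{\cdot}$ is strictly increasing and convex), and $u \mapsto \dfrac{16u}{1-8u}$ is strictly increasing on $[0,1/8)$. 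Since $\omx{t} < 1/8$ precisely when $t < \omxi{1/8}$, the argument stays in $[0,1/8)$. Therefore $\tau \mapsto \dfrac{2R'(\tau,R)}{\tau}$ is strictly increasing on $\bigl[0, R\cdot\omxi{1/8}\bigr)$, and composing with the strictly increasing $\zetap{\cdot}$ (strict monotonicity of $\zetap{\cdot}$ is recorded right after Proposition~\ref{prop:zetap_mglx_equivalence}) gives that $\zetap{\frac{2R'(\tau,R)}{\tau}}$ is increasing in $\tau$ on that interval. This settles the first assertion.

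For the limit, I would show $\dfrac{2R'(\tau,R)}{\tau} \to 0$ as $\tau \to 0^+$, and then invoke continuity of $\zetap{\cdot}$ at $0$ together with the normalization $\zetap{0} = 1$ (immediate from the definition: $\norm{x + 0\cdot y} = \norm{x} = 1$). That $\dfrac{2R'(\tau,R)}{\tau} \to 0$ follows from $\omx{t}\to 0$ as $t\to 0^+$, which is exactly uniform smoothness of $X$ (the defining property $\mglx{\tau}/\tau \to 0$); then $\dfrac{16\,\omx{t}}{1-8\,\omx{t}} \to 0$. Finally, by definition $\mu = \zetap{\frac{2R'(\norm{x_0-x_1},R)}{\norm{x_0-x_1}}}$, and $\frac{\norm{x_0-x_1}}{R}\to 0$ forces the argument of $\zetap{\cdot}$ to $0$, hence $\mu \to \zetap{0} = 1$. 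One may alternatively make this fully quantitative using Proposition~\ref{prop:zetap_mglx_equivalence}, $\zetap{\e} - 1 \le \mglx{2\e}$ for $\e \in [0,1/2]$, which bounds $\mu - 1$ by $\mglx{\bigl(\text{something}\bigr)}$ and again tends to $0$; I would probably include this as it gives an effective rate, but it is not needed for the bare statement.

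The only mild subtlety — hardly an obstacle — is bookkeeping the domain: one must check that as $\tau$ ranges over $[0, R\cdot\omxi{1/8})$ the inner argument $\frac{2R'(\tau,R)}{\tau}$ stays in the interval $[0,1/2]$ on which $\zetap{\cdot}$ is controlled by Proposition~\ref{prop:zetap_mglx_equivalence} (for the quantitative version) and, more basically, that the denominator $1 - 8\,\omx{t}$ stays positive so $R'$ is well-defined and nonnegative; both follow from $\omx{t} < 1/8$ on that interval. Since by \eqref{eq:day_nord_mglx} $\omx{t}\le 1$ always and $\omx{t}\to 0$, for $t$ small enough $\frac{16\,\omx{t}}{1-8\,\omx{t}}$ is as small as we like, so near $\tau=0$ everything lies comfortably inside $[0,1/2]$; for the monotonicity statement no such restriction is needed beyond $\omx{t}<1/8$. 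No step requires the uniform convexity of $X$, consistent with the remark in the paper.
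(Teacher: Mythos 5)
Your proposal is correct and follows essentially the same route as the paper: rewrite $\frac{2R'(\tau,R)}{\tau}=\frac{16\,\omx{\tau/R}}{1-8\,\omx{\tau/R}}$, get monotonicity from the monotonicity of $\omx{\cdot}$ (together with that of $u\mapsto\frac{16u}{1-8u}$ and $\zetap{\cdot}$), and get $\mu\to 1$ from $\omx{t}\to 0$ (uniform smoothness) combined with the fact that $\zetap{\e}\to 1$ as $\e\to 0$, which the paper draws from Proposition~\ref{prop:zetap_mglx_equivalence} and you obtain equivalently from $1\le\zetap{\e}\le 1+\e$. Your version merely spells out the domain bookkeeping that the paper leaves implicit.
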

\begin{proof}
By \Href{Proposition}{prop:zetap_mglx_equivalence},
it suffices to show that 
\[
\frac{R^\prime(\norm{x_0 -x_1}, R)}{\norm{x_0 -x_1}} \stackrel{\eqref{eq:def_Rprime}}{=} 
 \frac{8   \omx{\frac{{\norm{x_0 -x_1}}}{R}}}
{1 - 8   \omx{\frac{\norm{x_0 -x_1}}{R}}}
\]
tends to zero as $\frac{\norm{x_0 -x_1}}{R}$ tends to zero,
which is an immediate consequence of the uniformly smoothness of $X$. The monotonicity follows from the monotonicity of 
$\omx{\cdot}.$
\end{proof}

As for the first two of the assumptions, we can bound the corresponding constants using characteristics of a Banach space.
The following is a purely technical result, we formulate it as a separate statement and prove it later in  \Href{Section}{sec:tech_lemmas}.
\begin{claim}\label{claim:nasty}
Set 
\begin{equation}
\label{eq:definition_C2(X)_curve}
\beta_L = \omxi{\frac{(\zeta_X^{+})^{-1}(2)}{8(2 + (\zeta_X^{+})^{-1}(2))}}.
\end{equation}
Then $\beta_L \leq \omxi{\frac{1}{8}} < 2$, and for any positive constants
$\tau$ and $R$ satisfying  $\frac{\tau}{R} < \beta_L,$
inequality
$
\zetap{\frac{2 R'(\tau, R)}{\tau}} < 2
$
holds.
\end{claim}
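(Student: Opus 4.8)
The plan is to unwind the two assertions from the definition of $\beta_L$ and the monotonicity statements already available. First I would record that $(\zeta_X^{+})^{-1}(2)$ is a well-defined positive number (since $\zeta_X^{+}$ is strictly increasing with $\zeta_X^{+}(0)=1$), and that $\frac{(\zeta_X^{+})^{-1}(2)}{8(2+(\zeta_X^{+})^{-1}(2))} < \frac{1}{8}$ because the fraction $\frac{t}{2+t}$ is strictly below $1$ for every $t>0$. Since $\omega_X^{-1}$ is increasing, this immediately gives $\beta_L \le \omega_X^{-1}(1/8)$, and the bound $\omega_X^{-1}(1/8) < 2$ follows from the Day--Nordlander inequality~\eqref{eq:day_nord_mglx}: indeed $\omega_X(2) = \frac{\varrho_X(2)}{2} \ge \frac{\sqrt{5}-1}{2} > \frac{1}{8}$, so $2 > \omega_X^{-1}(1/8)$ by monotonicity of $\omega_X$.

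Next I would handle the main inequality $\zeta_X^{+}\!\left(\frac{2R'(\tau,R)}{\tau}\right) < 2$. By \Href{Lemma}{lem:small_mu}, the function $\tau \mapsto \zeta_X^{+}\!\left(\frac{2R'(\tau,R)}{\tau}\right)$ is increasing on $[0, R\,\omega_X^{-1}(1/8))$, and since $\beta_L \le \omega_X^{-1}(1/8)$ the point $\tau$ with $\tau/R < \beta_L$ lies in this interval; moreover by the same monotonicity the claimed inequality for all such $\tau$ reduces to the ``endpoint'' inequality, i.e.\ it suffices to show $\zeta_X^{+}\!\left(\frac{2R'(\tau,R)}{\tau}\right) \le 2$ in the limit $\tau/R \to \beta_L$, or more concretely that $\frac{2R'(\tau,R)}{\tau} \le (\zeta_X^{+})^{-1}(2)$ whenever $\tau/R < \beta_L$. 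Writing $t = (\zeta_X^{+})^{-1}(2)$ and recalling from~\eqref{eq:def_Rprime} that $\frac{R'(\tau,R)}{\tau} = \frac{8\,\omega_X(\tau/R)}{1 - 8\,\omega_X(\tau/R)}$, the inequality $\frac{2R'(\tau,R)}{\tau} \le t$ is equivalent, after clearing the positive denominator $1-8\,\omega_X(\tau/R)$ (positive because $\tau/R < \beta_L \le \omega_X^{-1}(1/8)$ forces $\omega_X(\tau/R) < 1/8$), to $16\,\omega_X(\tau/R) \le t\bigl(1 - 8\,\omega_X(\tau/R)\bigr)$, i.e.\ $\omega_X(\tau/R) \le \frac{t}{8(2+t)}$. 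But $\frac{t}{8(2+t)}$ is exactly $\omega_X(\beta_L)$ by the definition~\eqref{eq:definition_C2(X)_curve} of $\beta_L$, and $\tau/R < \beta_L$ together with the monotonicity of $\omega_X$ gives $\omega_X(\tau/R) < \omega_X(\beta_L) = \frac{t}{8(2+t)}$, which is the desired inequality (with strict sign, hence $\frac{2R'(\tau,R)}{\tau} < t$ and then $\zeta_X^{+}(\cdots) < 2$ by strict monotonicity of $\zeta_X^{+}$).

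The only genuinely delicate point is making sure the algebraic manipulation of the inequality $\frac{2R'(\tau,R)}{\tau} \le (\zeta_X^{+})^{-1}(2)$ is carried out with the correct direction of the inequality when multiplying through by $1 - 8\,\omega_X(\tau/R)$; this is why the bound $\beta_L \le \omega_X^{-1}(1/8)$ (which keeps that factor positive) is needed first, and it is no accident that the definition of $\beta_L$ in~\eqref{eq:definition_C2(X)_curve} is precisely the value of $\omega_X^{-1}$ that solves $\omega_X(\beta_L) = \frac{t}{8(2+t)}$ — everything else is bookkeeping. I would therefore present the argument in the order: (i) $t := (\zeta_X^{+})^{-1}(2) > 0$ and $\frac{t}{2+t} < 1$; (ii) hence $\beta_L \le \omega_X^{-1}(1/8)$ and, via~\eqref{eq:day_nord_mglx}, $\omega_X^{-1}(1/8) < 2$; (iii) for $\tau/R < \beta_L$ deduce $\omega_X(\tau/R) < \frac{t}{8(2+t)} < \frac18$, so the denominator in~\eqref{eq:def_Rprime} is positive and rearranging yields $\frac{2R'(\tau,R)}{\tau} < t$; (iv) apply $\zeta_X^{+}$ and its monotonicity to conclude $\zeta_X^{+}\!\left(\frac{2R'(\tau,R)}{\tau}\right) < 2$.
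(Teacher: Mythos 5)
Your argument is correct and essentially the paper's own: both reduce the main inequality, via the formula \eqref{eq:def_Rprime} for $R'(\tau,R)/\tau$ and the monotonicity of $\omega_X$ and $\zeta_X^{+}$, to the identity $\omega_X(\beta_L)=\frac{t}{8(2+t)}$ with $t=(\zeta_X^{+})^{-1}(2)$, and both obtain $\omega_X^{-1}(1/8)<2$ from the Day--Nordlander bound \eqref{eq:day_nord_mglx}. The only (harmless) difference is that for $\beta_L\le\omega_X^{-1}(1/8)$ you use the trivial estimate $\frac{t}{2+t}<1$, whereas the paper establishes the slightly stronger $\frac{t}{2+t}\le\frac{3}{5}$ (i.e.\ $\omega_X(\tau/R)\le\frac{3}{40}$), which it later reuses in Claims \ref{claim:bounder} and \ref{claim:sum}.
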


\subsection{Algorithm for the construction of a curve}
\noindent
 
\begin{algorithm}[H]
\label{algorithm}
\SetAlgoLined 
\KwData{A proximally smooth with constant $R$ set $A \subset X$, two distinct points $x_0,x_1$ in $A$ with $\frac{\norm{x_0 -x_1}}{R} < \beta_L$, where $\beta_L$ is given by \eqref{eq:definition_C2(X)_curve}.}
\KwResult{ A rectifiable curve $f([0,1]),$ where  $f \colon [0,1] \to A $ is a continuous function with 
$f(0) = x_0$ and $f(1) = x_1$}
Set $S_0 = \{0,1\}$ and $S_i = \{\frac{j}{2^i} \mid j \in [2^i]\}\} \cup \{0\}$ for $i \in \N.$
\begin{enumerate}
 \item Define $f$ at points of $S_0$ as follows: $f(0) = x_0$ and $f(1) = x_1.$
 \item  For every $i \in \N,$ we extend the domain of $f$ to the set $S_i \setminus S_{i-1}$ as follows:
 
set the value of $f$ at $\frac{2j-1}{2^i}$ to be a slice-projection of the midpoint  of the segment 
$\left[f\left(\frac{j-1}{2^{i-1}}\right) f\left(\frac{j}{2^{i-1}}\right)\right]$ for all $j \in [2^{i-1}]$ on $A$.
\item Continuously extend $f$ on $[0,1].$ 
 \end{enumerate} 
 \caption{Construction of a curve in a proximally smooth set}
\end{algorithm}
\begin{center}
\begin{figure}[t]
\begin{minipage}[h]{0.49\linewidth}
\center{		\begin{picture}(100,130)
		 \put(-40,0){\includegraphics[scale=0.6]{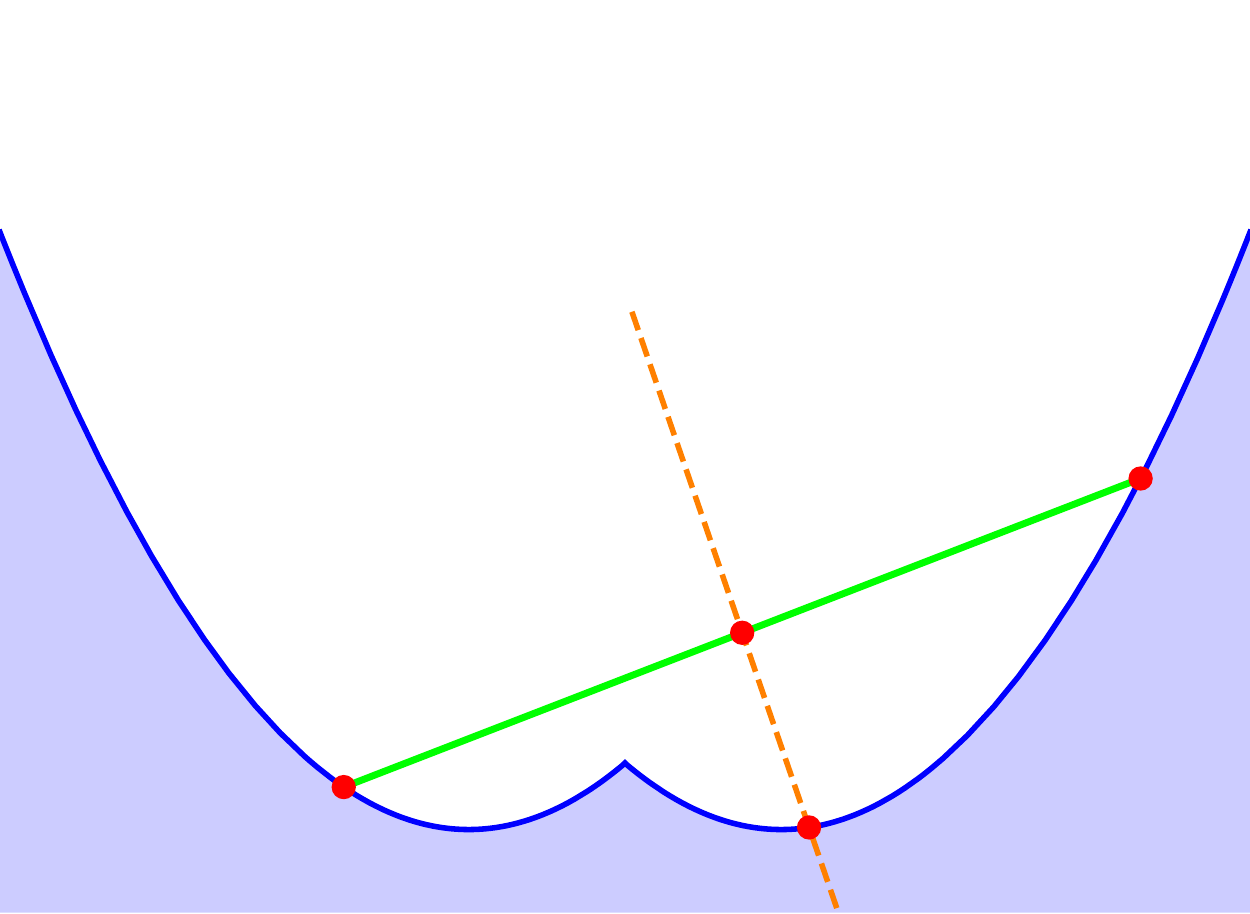}}
		 \put(15 , 30){$x_0$}
		 \put(145 , 80){$x_1$}
		 \put(87 , 58){$\frac{x_0 + x_1}{2}$}
		 \put(90 , 23){$f\!\left(\frac{1}{2} \right)$}
			\end{picture}}
\end{minipage}
\hfill
\begin{minipage}[h]{0.49\linewidth}
		\begin{picture}(100,130)
		 \put(30,0){\includegraphics[scale=0.6]{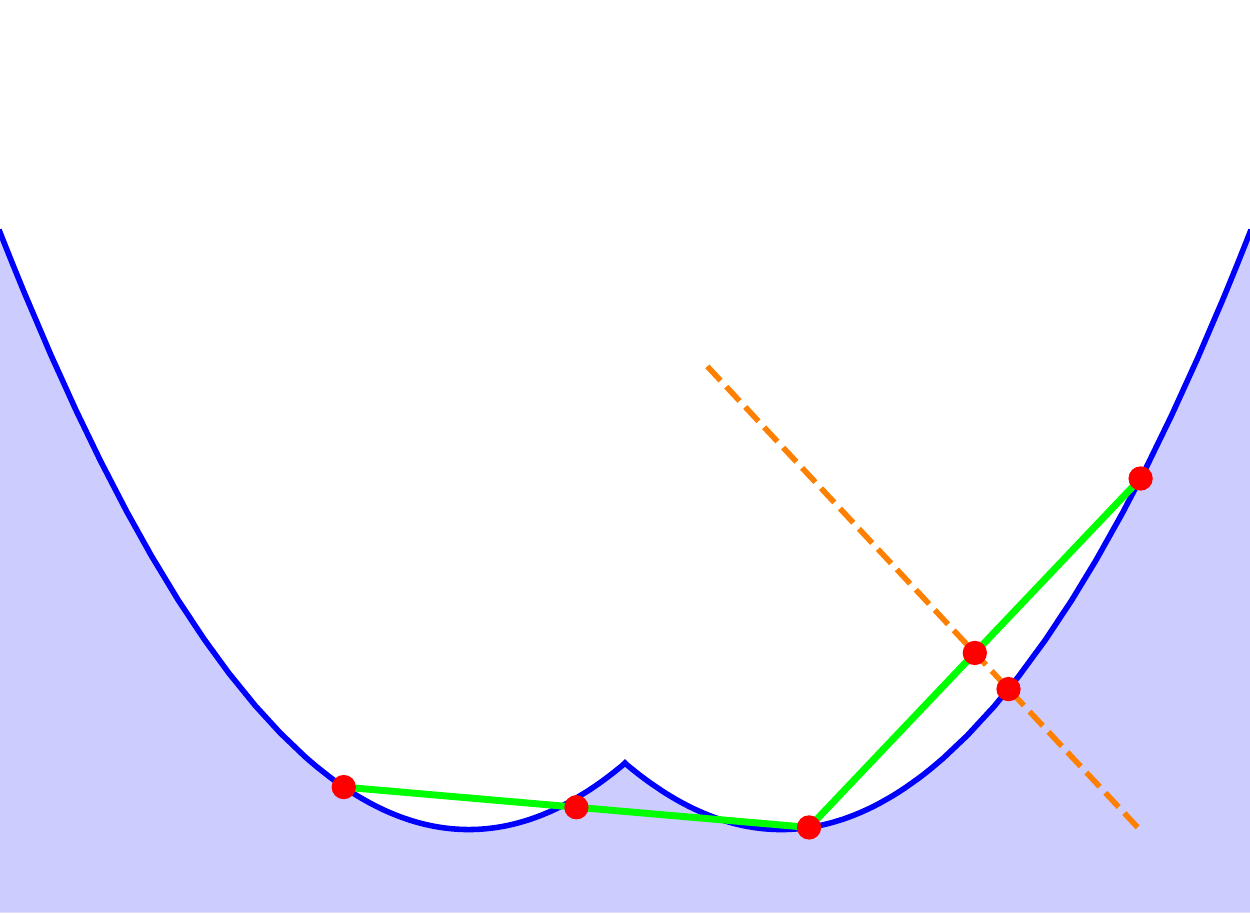}}
		 		 \put(85 , 30){$x_0$}
		 \put(215 , 80){$x_1$}
		 \put(208 , 33){$f\!\left(\frac{3}{4} \right)$}
		 \put(160 , 23){$f\!\left(\frac{1}{2} \right)$}
		 \put(114 , 29){$f\!\left(\frac{1}{4} \right)$}
		 \end{picture}
\end{minipage}
\caption{The first two iterations of the algorithm.}
\end{figure}
\end{center}
\subsection{Problems needed to be  justified}
 To show the correctness of  \Href{Algorithm}{algorithm},
 one needs to check:
 \begin{enumerate}
 \item For every $i \in \N$ and $j \in [2^{i-1}],$
 there exists a slice-projection of the midpoint of a segment
 $\left[f\left(\frac{j-1}{2^{i-1}}\right) f\left(\frac{j}{2^{i-1}}\right)\right]$ onto the set $A.$
 \item $f$ can be continuously extended from the rational numbers of $[0,1]$
 to the whole segment.
 \item Curve $f([0,1])$ is rectifiable.
 \end{enumerate}

According to \Href{Lemma}{lem:slice_proj_existence} to show the existence of a slice-projection at each step, it suffices to show that  the length  of segment $\left[f\left(\frac{j-1}{2^{i-1}}\right) f\left(\frac{j}{2^{i-1}}\right)\right]$ is less than 
$R \omxi{1/8}.$ We will justify these questions for
 $x_0, x_1$ and $R$ satisfying assumption \ref{assump:2}.

\section{Bound on length}
\label{sec:length_bound}
\Href{Theorem}{thm:rectifable_curve} is an immediate consequence of the following theorem.
\begin{thm}\label{thm:length}
Under the condition of  \Href{Theorem}{thm:rectifable_curve},
additionally let  the modulus of smoothness of $X$ satisfy inequality \eqref{eq:modulus_smoothness_power_type} and  
\[
\frac{\norm{x_0 -x_1}}{R} < 
\beta_L = \omxi{\frac{(\zeta_X^{+})^{-1}(2)}{8(2 + (\zeta_X^{+})^{-1}(2))}}.
\]
Then \Href{Algorithm}{algorithm} returns curve $\gamma$
satisfying
the following inequality:
\[
\length{\gamma} \leq \norm{x_0 - x_1}
\exp\left[\left(\frac{16}{5}\right)^s C_{sm}^{s+1}\frac{1}{1-\left(\frac{\mu}{2}\right)^{s(s-1)}}\left(\frac{\norm{x_0 - x_1}}{2R}\right)^{s(s-1)}\right],
\] 
where $\mu$ is given by \eqref{eq:mu}.
\end{thm}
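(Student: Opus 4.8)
The plan is to control the length of the limiting curve $\gamma$ by estimating, iteration by iteration, how much longer the piecewise-linear approximation becomes when we replace a chord $[f(\frac{j-1}{2^{i-1}})f(\frac{j}{2^{i-1}})]$ by the two chords through the new vertex $f(\frac{2j-1}{2^{i}})$. Write $d_i$ for the maximal length of a chord at level $i$, i.e. $d_i = \max_{j\in[2^i]}\norm{f(\frac{j-1}{2^i}) - f(\frac{j}{2^i})}$, and let $\ell_i$ be the total length of the level-$i$ polygon. Since the new vertex is a slice-projection of the midpoint, \Href{Corollary}{cor:waist_distance_weak_c} (with $\la = 1/2$) tells us that this vertex deviates from the midpoint of the chord by at most $\frac14 R^\prime(d,R)$ in a direction quasi-orthogonal to the chord, where $d$ is the length of that chord. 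The first task is therefore a purely two-dimensional triangle estimate: given a segment of length $d$ and a point at quasi-orthogonal distance $\rho$ from its midpoint, bound the sum of the lengths of the two new segments. Using the definition of $\zeta_X^+$ this sum is at most $d\cdot\zetap{2\rho/d}$, and with $\rho \le \frac14 R^\prime(d,R)$ and the definition \eqref{eq:mu} of $\mu$ one gets that each chord of length $d$ is replaced by two chords of total length at most $\mu\, d$, while each of the two halves has length at most $\frac{\mu}{2} d$. Hence $d_{i+1} \le \frac{\mu}{2} d_i$ and $\ell_{i+1} \le \mu\,\ell_i$ — but the latter is far too lossy, so instead I track the \emph{increment}: $\ell_{i+1} - \ell_i \le (\mu - 1)\ell_i$ is still lossy; the right bookkeeping is to estimate the increment contributed by a single chord of length $d$ as $(\zetap{2\rho/d} - 1)\,d$, and to use \Href{Proposition}{prop:zetap_mglx_equivalence} together with the power-type bound \eqref{eq:modulus_smoothness_power_type} to see that this is at most $d\cdot \mglx{4\rho/d} \lesssim C_{sm} d (\rho/d)^s \lesssim C_{sm} d \big(\omx{d/R}\big)^s$, which in turn is of the form $\const\cdot C_{sm}^{s+1} d^{s(s-1)+1} R^{-s(s-1)}$ after expanding $\omx{d/R} = \mglx{d/R}\cdot R/d \le C_{sm}(d/R)^{s-1}$.

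With this per-chord increment estimate in hand, the second step is a summation over levels. At level $i$ there are $2^i$ chords, each of length at most $d_i \le (\mu/2)^i \norm{x_0-x_1}$, so the total increment from level $i$ to level $i+1$ is at most $2^i \cdot \const\, C_{sm}^{s+1} d_i^{\,s(s-1)+1} R^{-s(s-1)}$. Plugging in $d_i \le (\mu/2)^i\norm{x_0-x_1}$ gives a bound of the form $\const\, C_{sm}^{s+1}\norm{x_0-x_1}\,\big(\norm{x_0-x_1}/(2R)\big)^{s(s-1)}\cdot q^i$ where $q = 2\,(\mu/2)^{\,s(s-1)+1} = (\mu/2)^{s(s-1)}\cdot\big(2(\mu/2)\big) = \mu\,(\mu/2)^{s(s-1)}$ — wait, more carefully $2\cdot(\mu/2)^{s(s-1)+1} = (\mu/2)^{s(s-1)}\cdot 2\cdot(\mu/2) = \mu(\mu/2)^{s(s-1)}$; the point is that assumption \ref{assump:3}, $\mu^s/2^{s-1} < 1$, is exactly what forces $q < 1$ (indeed $\mu(\mu/2)^{s(s-1)} < 1$ follows from $\mu < 2$ and $(\mu/2)^{s(s-1)} \le (\mu/2)^{s-1} = \mu^{s-1}/2^{s-1} < 1/\mu$ using $s(s-1)\ge s-1$ and $\mu \le 2$). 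Summing the geometric series in $i$ produces the factor $\frac{1}{1-(\mu/2)^{s(s-1)}}$ appearing in the statement, and absorbing the constant $\const$ into $(16/5)^s$ (this is where the somewhat arbitrary-looking numerical constant comes from: it collects the $8$'s from $R^\prime$, the factors of $2$ and $4$ from \Href{Corollary}{cor:waist_distance_weak_c} and \Href{Proposition}{prop:zetap_mglx_equivalence}, and the $1/4$ deviation bound) gives $\length{\gamma} - \norm{x_0-x_1} \le \norm{x_0-x_1}\big[(16/5)^s C_{sm}^{s+1}\frac{1}{1-(\mu/2)^{s(s-1)}}(\norm{x_0-x_1}/(2R))^{s(s-1)}\big]$, and the inequality $1 + t \le e^t$ upgrades this additive bound to the multiplicative $\exp[\cdots]$ form stated. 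Along the way one must check that $d_i < R\,\omxi{1/8}$ for every $i$ (so that \Href{Lemma}{lem:slice_proj_existence} applies at every step), which is immediate from $d_i \le (\mu/2)^i\norm{x_0-x_1} \le \norm{x_0-x_1} < R\beta_L \le R\,\omxi{1/8}$ using \Href{Claim}{claim:nasty}, and that the increments are summable so that $f$ extends continuously and the curve is rectifiable — this is automatic from the finite length bound.

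The main obstacle I expect is the first step: the triangle/deviation estimate that converts "the new vertex deviates quasi-orthogonally by $\rho$ from the chord midpoint" into "the two new chords have total length at most $d\cdot\zetap{2\rho/d}$", and in particular getting clean constants. The subtlety is that $\zeta_X^+$ is defined for a \emph{unit} vector plus an $\e$-multiple of a \emph{quasi-orthogonal unit} vector, so one has to normalize correctly: the half-chord from $x_0$ to the new vertex has the form (half the old chord vector) plus (the quasi-orthogonal deviation vector), and after scaling by $2/d$ this is a unit vector plus a quasi-orthogonal vector of norm $2\rho/d$, whence its norm is at most $\zetap{2\rho/d}$. One then has to verify $2\rho/d \le 2R^\prime(d,R)/d \cdot \frac12$, fitting the argument of $\zetap{\cdot}$ in \eqref{eq:mu}, and check $2\rho/d \le 1/2$ so that \Href{Proposition}{prop:zetap_mglx_equivalence} is applicable — both follow from assumption \ref{assump:1} via the formula \eqref{eq:def_Rprime} for $R^\prime$. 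A secondary bookkeeping nuisance is making sure the recursion for $d_i$ and for the increments uses the \emph{same} chord-length $d$ at each application (the bound $\zetap{\cdot}$ is monotone, so one may always use the worst chord $d_i$ at level $i$), and that replacing $\mu$-dependent quantities by their bound at the top level $d_0 = \norm{x_0-x_1}$ is legitimate because $\omx{\cdot}$ and hence $R^\prime(\cdot,R)/\cdot$ and $\zetap{\cdot}$ are increasing (\Href{Lemma}{lem:small_mu}). Everything else is routine estimation.
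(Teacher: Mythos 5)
Your skeleton --- bound the two new half-chords through the slice-projection via its quasi-orthogonal deviation and $\zeta_X^{+}$, deduce $\Delta_i\le\left(\frac{\mu}{2}\right)^i\norm{x_0-x_1}$, convert $\zetap{\cdot}-1$ into $\const\cdot C_{sm}^{s+1}\left(\Delta_i/R\right)^{s(s-1)}$ via Proposition~\ref{prop:zetap_mglx_equivalence} and \eqref{eq:modulus_smoothness_power_type}, sum a geometric series and finish with $1+t\le e^t$ --- is the paper's. The genuine gap is in the summation bookkeeping. You bound the level-$i$ increment of length by (number of chords) times (worst per-chord increment), i.e.\ by $2^i\,\const\,C_{sm}^{s+1}\Delta_i^{\,1+s(s-1)}R^{-s(s-1)}$, and after inserting $\Delta_i\le\left(\frac{\mu}{2}\right)^i\Delta_0$ the ratio of your geometric series is, as you yourself compute, $q=\mu\left(\frac{\mu}{2}\right)^{s(s-1)}$, not $\left(\frac{\mu}{2}\right)^{s(s-1)}$. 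This has two consequences. First, $q<1$ requires assumption~\ref{assump:3}, which is \emph{not} a hypothesis of Theorem~\ref{thm:length}: the condition $\norm{x_0-x_1}/R<\beta_L$ only guarantees $\mu<2$ (Claim~\ref{claim:nasty}), and $\mu$ can be arbitrarily close to $2$ in the allowed range, in which case $q>1$ (for instance $s=2$ and $\mu\in(4^{1/3},2)$ give $q=\mu^3/4>1$) and your series diverges; assumption~\ref{assump:3} is the extra condition reserved for $\beta_I$ in the inclusion results. Second, even when your series converges it produces the factor $\frac{1}{1-\mu(\mu/2)^{s(s-1)}}$, which is larger than the stated $\frac{1}{1-(\mu/2)^{s(s-1)}}$; your sentence that the sum ``produces the factor $\frac{1}{1-(\mu/2)^{s(s-1)}}$'' contradicts your own ratio. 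The repair is the multiplicative accounting the paper actually uses: each per-chord increment is at most $\psif{\Delta_i}$ times that chord's length, so the level-$i$ increment is at most $\psif{\Delta_i}\,\length{\gamma_i}$, hence $\length{\gamma_{i+1}}\le\left(1+\psif{\Delta_i}\right)\length{\gamma_i}\le e^{\psif{\Delta_i}}\length{\gamma_i}$ (the paper phrases this as $\length{\gamma_i}\le 2^i\Delta_i$ together with $\Delta_i\le\frac{\Delta_{i-1}}{2}\zetap{2R^\prime(\Delta_{i-1},R)/\Delta_{i-1}}$), and then the relevant geometric ratio is $\left(\frac{\mu}{2}\right)^{s(s-1)}<1$, for which $\mu<2$ suffices.

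Two smaller points. Corollary~\ref{cor:waist_distance_weak_c} at $\la=\frac12$ gives deviation at most $R^\prime(d,R)$, since $4\la(1-\la)=1$, not $\frac14 R^\prime(d,R)$; this does not hurt the structure, because all you need is $2\rho/d\le 2R^\prime(d,R)/d$, matching the argument in \eqref{eq:mu}, but your assertions that $2\rho/d$ is only half of that quantity and that ``$2\rho/d\le 1/2$ follows from assumption~\ref{assump:1}'' are unjustified: under $\norm{x_0-x_1}/R<\beta_L$ one only gets $2R^\prime(d,R)/d<(\zeta_X^{+})^{-1}(2)\le 3$, so the hypothesis $\e\le\frac12$ of Proposition~\ref{prop:zetap_mglx_equivalence} is not automatic at the first levels (the paper's Claim~\ref{claim:bounder} applies the proposition in the same range without comment, so this wrinkle is shared, but your stated justification is wrong). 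The remaining ingredients --- existence of slice-projections at every level since $\Delta_i\le\Delta_0<R\beta_L\le R\,\omxi{1/8}$, the normalization argument for $\zeta_X^{+}$, using monotonicity to evaluate everything at $\Delta_0$, and deducing continuity and rectifiability from the uniform length bound --- agree with the paper.
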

\begin{proof}
We denote by  $\gamma_i$  the polygonal curve with consecutive vertices of
$\left\{f(t) \mid t \in S_i\right\}$ and by $\Delta_i$  the largest length of a segment of $\gamma_i,$
$i \in \N \cup \{0\}.$

We start with an upper bound on $\Delta_i.$ 
By construction, 
we have that
\begin{equation}
\label{eq:delta_i_trivial}
\Delta_i \leq \frac{\Delta_{i-1}}{2} \zetap{\frac{2 R^\prime(\Delta_{i-1}, R)}{\Delta_{i-1}}}.
\end{equation}

To have a meaningful bound, one needs to guarantee that the argument of $\zetap{\cdot}$ is less than one. 
Using  \Href{Claim}{claim:nasty},  we see that $\Delta_i < \Delta_{i-1}$ starting with $i =1.$ Hence, we have
\begin{lem}
\begin{equation}
\label{eq:Delta_bound_0}
\Delta_i \leq \frac{\mu}{2} \Delta_{i-1}  \leq \left(\frac{\mu}{2}\right)^i \Delta_{0}  \to 0 \quad \text{as} \ i \to \infty.
\end{equation}
\end{lem}

Denote $\psif{\tau} = \zetap{\frac{2 R^\prime(\tau, R)}{\tau}} -1.$
Using \eqref{eq:Delta_bound_0} in \eqref{eq:delta_i_trivial}, we obtain
\begin{equation*}
\Delta_i \leq \frac{\Delta_{i-1}}{2} \left(1 + \psif{\left(\frac{\mu}{2}\right)^i \Delta_{0} }\right) \leq 
\frac{\Delta_{i-1}}{2} \exp\left[\psif{\left(\frac{\mu}{2}\right)^i \Delta_{0} }\right] .
\end{equation*}
Therefore, we have
\begin{equation*}
\Delta_i \leq 
\frac{\Delta_{0}}{2^i} \exp\left[\sum_0^i \psif{\left(\frac{\mu}{2}\right)^i \Delta_{0} }\right] .
\end{equation*}
Finally,
\begin{equation}
\label{eq:length_i}
\length{\gamma_i} \leq 2^i \Delta_i \leq \Delta_0 \exp\left[\sum_0^i \psif{\left(\frac{\mu}{2}\right)^i \Delta_{0} }\right] \leq
\Delta_0 \exp\left[\sum_0^\infty \psif{\left(\frac{\mu}{2}\right)^i \Delta_{0} }\right].
\end{equation}

We need to bound the series in the rightmost part in \eqref{eq:length_i}. It is a purely technical task that involves only routine computations.  We formulate the following inequality and prove it later in Section \ref{sec:tech_lemmas}.
\begin{claim}\label{claim:bounder}
Let $0 < \Delta_0 < R \cdot \beta_L.$  Then
\[
\exp\left[\sum_0^\infty \psif{\left(\frac{\mu}{2}\right)^i \Delta_{0} }\right]
\leq
\exp\left[\left(\frac{16}{5}\right)^s C_{sm}^{s+1}\frac{1}{1-\left(\frac{\mu}{2}\right)^{s(s-1)}}\left(\Delta_0\right)^{s(s-1)}\right].
\]
\end{claim}
Thus, passing to the limit in \eqref{eq:length_i} as $i$ tends to infinity and using \Href{Claim}{claim:bounder}, we get 
that the lengths of $\gamma_i$ are uniformly bounded.
Therefore,   function $f$ constructed above is uniformly continuous on the rational numbers of  interval $[0,1].$ By routine, it can be extended to the continuous function on the whole interval with values in $A.$ Thus, $f$ defines a continuous curve in $A$.
Using \eqref{eq:length_i} again, one sees that the first variation of $\gamma = f([0,1])$ is bounded. Thus,
$\gamma$ is rectifiable.
\end{proof}

\section{Inclusion}
\label{sec:inclusion}
In this Section we prove \Href{Theorem}{thm:rectifable_curve_inclusion}. 
The proof consists of several steps. Firstly, we show that the curve returned by \Href{Algorithm}{algorithm} is in a cylinder
of a certain radius around line $x_0 x_1.$ Then we
show that the second part of the curve, that is $f([1/2, 1]),$
belongs to a certain convex cone with apex at $x_0.$
Finally, we prove that all parts of the curve of the form
$f([1/2^{k}, 1/2^{k-1}])$ are in a certain cone with apex at $x_0.$

Again, 
we denote by  $\gamma_i$  the polygonal curve with consecutive vertices of
$\left\{f(t) \mid t \in S_i\right\}$ and by $\Delta_i$  the largest length of a segment of $\gamma_i,$
$i \in \N \cup \{0\}.$
Denote   an intersection point of the hyperplane
$H_{x_0 -x_1}+f(t)$ and the line $x_0 x_1$ by $g(t).$
Define $g_2 \colon [0,1] \to [0, \infty)$ and 
$g_1 \colon  [0,1] \to [0, \infty)$ by
\[
g_2 (t) = \norm{x_0 -g(t)} 
\quad \text{and} \quad
g_1 (t) = \norm{f(t) -g(t)}.
\]

We choose  {$\beta_I$} in such a way that
inequality $\frac{\norm{x_0 - x_1}}{R} < \beta_I$ implies
assumption \ref{assump:3}. By \Href{Lemma}{lem:small_mu}
such a constant exists.

To bound  $g_1$ from above and  $g_2$ from below, we need the following purely technical result, which we prove in the next section.
\begin{claim}\label{claim:sum}
Under the conditions of  \Href{Theorem}{thm:rectifable_curve_inclusion}, additionally let
the modulus of smoothness of $X$ satisfy inequality \eqref{eq:modulus_smoothness_power_type} and let 
$\frac{{\norm{x_0 -x_1}}}{R}$ satisfy assumption \ref{assump:3},  then inequality
\[
\sum_{j=0}^k\frac{R^\prime(\Delta_j, R)}
{2^{k-j}}
<
\frac{24 C_{sm}}{1 - \frac{\mu^{s}}{2^{s-1}}}
\cdot\left(\frac{\Delta_0}{R}\right)^{s-1}
\cdot \frac{\Delta_0}{2^{k}}
\]
holds.
\end{claim}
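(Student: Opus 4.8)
The plan is to bound each term $R'(\Delta_j, R)$ separately using the power-type hypothesis on the modulus of smoothness, then sum the resulting geometric-type series. First I would recall from \eqref{eq:def_Rprime} that
\[
R'(\Delta_j, R) = \Delta_j \frac{8\,\omx{\Delta_j/R}}{1 - 8\,\omx{\Delta_j/R}}.
\]
Since $\Delta_j < \Delta_0 < R\beta_L \le R\,\omxi{1/8}$ by Assumption \ref{assump:1} (which is implied by $\frac{\norm{x_0-x_1}}{R} < \beta_L$ via Claim \ref{claim:nasty}), the denominator satisfies $1 - 8\,\omx{\Delta_j/R} \ge 1 - 8\,\omx{\Delta_0/R}$; in fact, pushing $\Delta_0$ a bit below $R\beta_L$ one gets a denominator bounded below by, say, $\tfrac13$ or a similarly explicit constant, so that $R'(\Delta_j, R) \le 24\, \Delta_j\, \omx{\Delta_j/R}$. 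Now apply the power-type bound $\mglx{\tau} \le C_{sm}\tau^s$, i.e. $\omx{\tau} = \mglx{\tau}/\tau \le C_{sm}\tau^{s-1}$, to obtain
\[
R'(\Delta_j, R) \le 24\, C_{sm}\, \Delta_j \left(\frac{\Delta_j}{R}\right)^{s-1} = 24\, C_{sm}\, \frac{\Delta_j^{\,s}}{R^{s-1}}.
\]

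Next I would insert the geometric decay $\Delta_j \le \left(\frac{\mu}{2}\right)^j \Delta_0$ from \eqref{eq:Delta_bound_0}, giving $\Delta_j^{\,s} \le \left(\frac{\mu}{2}\right)^{js}\Delta_0^{\,s}$. Substituting into the sum and pulling out the $j$-independent factors,
\[
\sum_{j=0}^k \frac{R'(\Delta_j, R)}{2^{k-j}}
\le \frac{24\, C_{sm}\, \Delta_0^{\,s}}{R^{s-1}\, 2^k} \sum_{j=0}^k 2^j \left(\frac{\mu}{2}\right)^{js}
= \frac{24\, C_{sm}\, \Delta_0^{\,s}}{R^{s-1}\, 2^k} \sum_{j=0}^k \left(\frac{\mu^s}{2^{s-1}}\right)^{j}.
\]
Under Assumption \ref{assump:3}, $\frac{\mu^s}{2^{s-1}} < 1$, so the geometric series is bounded by $\frac{1}{1 - \mu^s/2^{s-1}}$. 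Finally, rewriting $\Delta_0^{\,s}/R^{s-1} = \Delta_0 \cdot (\Delta_0/R)^{s-1}$ yields exactly
\[
\sum_{j=0}^k \frac{R'(\Delta_j, R)}{2^{k-j}}
\le \frac{24\, C_{sm}}{1 - \mu^s/2^{s-1}} \left(\frac{\Delta_0}{R}\right)^{s-1} \frac{\Delta_0}{2^k},
\]
and the strict inequality in the statement follows because the bound $R'(\Delta_j,R)\le 24\,\Delta_j\,\omx{\Delta_j/R}$ is already strict for $j\ge 1$ (or one simply notes that the partial geometric sum is strictly below its infinite value).

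The only genuinely delicate point is the first step: one must verify that $\frac{\norm{x_0-x_1}}{R} < \beta_L$ (hence also $\frac{\norm{x_0-x_1}}{R} < \beta_I$, since $\beta_I \le \beta_L$) forces $1 - 8\,\omx{\Delta_0/R}$ to stay above the explicit constant $\tfrac13$ needed to turn the factor $\frac{8}{1-8\,\omx{\cdot}}$ into the clean constant $24$. This is where Claim \ref{claim:nasty}'s choice of $\beta_L$ — built precisely so that $\zetap{2R'(\tau,R)/\tau} < 2$ and a fortiori $R'(\tau,R)/\tau$ is small — does the work: one chases the definitions to see that $\beta_L \le \omxi{1/8}$ already makes $8\,\omx{\Delta_0/R} \le 1$, and a marginally smaller threshold (which is harmless since we only need \emph{some} explicit $\beta_I$) drives it below $\tfrac{2}{3}$. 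Everything after that is the routine geometric-series estimate sketched above.
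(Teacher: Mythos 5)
Your proposal is correct and follows essentially the same route as the paper: bound the denominator $1-8\,\omx{\Delta_j/R}$ below by an explicit constant, apply $\mglx{\tau}\le C_{sm}\tau^{s}$ and the decay $\Delta_j\le\left(\frac{\mu}{2}\right)^{j}\Delta_0$, and sum the resulting geometric series $\sum_j\left(\frac{\mu^{s}}{2^{s-1}}\right)^{j}$ under assumption \ref{assump:3}. The only cosmetic difference is the denominator constant: the paper gets $1-8\,\omx{\Delta_j/R}\ge \frac{2}{5}$ directly from the bound $\beta_L\le\omxi{3/40}$ established inside the proof of \Href{Claim}{claim:nasty} (so no shrinking of the threshold is needed), while you obtain an equally serviceable $\frac13$ by taking $\beta_I$ marginally smaller, which is harmless and yields the same constant $24$.
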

\begin{lem}\label{lem:final_estimate}
Under the condition of  \Href{Theorem}{thm:rectifable_curve_inclusion}, additionally let
the modulus of smoothness of $X$ satisfy inequality \eqref{eq:modulus_smoothness_power_type} and 
$\frac{{\norm{x_0 -x_1}}}{R}$ satisfy assumption \ref{assump:3}.
For any $t \in [0,1],$ inequality 
\[
g_1(t) \leq \frac{48 C_{sm}}{1 - \frac{\mu^{s}}{2^{s-1}}}
\cdot\left(\frac{\Delta_0}{R}\right)^{s-1} \Delta_0
\]
holds. 
\end{lem}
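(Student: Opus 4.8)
The plan is to reduce the statement to a telescoping estimate along the dyadic construction and then invoke \Href{Claim}{claim:sum}. Fix $t \in [0,1]$. If $t$ is a dyadic rational, say $t \in S_k \setminus S_{k-1}$, then $f(t)$ is a slice-projection of the midpoint of some segment $[f(a) f(b)]$ with $a, b$ consecutive points of $S_{k-1}$, and by \Href{Corollary}{cor:waist_distance_weak_c} the deviation $g_1(t) = \norm{f(t) - g(t)}$ is controlled by $R^\prime(\norm{f(a) - f(b)}, R) \le R^\prime(\Delta_{k-1}, R)$, using that $R^\prime(\cdot, R)$ is increasing in its first argument (which follows from the monotonicity of $\omx{\cdot}$). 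The key point is that this local deviation is measured \emph{relative to the chord} $[f(a) f(b)]$, not relative to the line $x_0 x_1$; so to bound $g_1(t)$ I must also account for how far the chord $[f(a) f(b)]$ itself has drifted from the line $x_0 x_1$. That drift is, in turn, bounded by the deviations $g_1$ at the endpoints $a, b$, which live at dyadic levels $< k$.

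Concretely, I would set $h_k = \sup\{ g_1(t) : t \in S_k \}$ and derive the recursion
\begin{equation*}
h_k \le \frac{h_{k-1}}{2} + R^\prime(\Delta_{k-1}, R),
\end{equation*}
the first term coming from the fact that $g(t)$ for a new midpoint-type node sits (by the quasi-orthogonality built into the slice-projection) essentially at the average of the $g$-values of its two parents, so the perpendicular offset of the parent chord at that point is at most the average of $h_{k-1}$-type quantities, hence $\le h_{k-1}$; more carefully one gets the factor $\tfrac12$ by pairing the two parents. Here I should be slightly careful: the modulus $\zetap{\cdot}$ enters when transferring ``quasi-orthogonal to $x_1 - x_0$'' between nearby chords, but since all the relevant direction changes are $O(\Delta_{k-1})$ and the $\zetap{\cdot}$ correction is a higher-order term already absorbed into the definition of $R^\prime$ and $\mu$, it does not spoil the recursion. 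Unrolling,
\begin{equation*}
h_k \le \sum_{j=0}^{k-1} \frac{R^\prime(\Delta_j, R)}{2^{k-1-j}} + \frac{h_0}{2^k} = 2\sum_{j=0}^{k-1} \frac{R^\prime(\Delta_j, R)}{2^{k-j}},
\end{equation*}
since $h_0 = g_1(0) = 0$ (the endpoints $x_0, x_1$ lie on the line). Now \Href{Claim}{claim:sum} bounds the sum $\sum_{j=0}^{k} R^\prime(\Delta_j,R) 2^{j-k}$ by $\tfrac{24 C_{sm}}{1 - \mu^s/2^{s-1}}(\Delta_0/R)^{s-1}\,\Delta_0/2^k \le \tfrac{24 C_{sm}}{1 - \mu^s/2^{s-1}}(\Delta_0/R)^{s-1}\Delta_0$, and the factor $2$ from unrolling turns the constant $24$ into $48$, yielding the claimed bound for all dyadic $t$.

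Finally, I would pass from dyadic $t$ to arbitrary $t \in [0,1]$ by continuity: $f$ is continuous on $[0,1]$ (established in the proof of \Href{Theorem}{thm:length}), the map $t \mapsto g(t)$ is continuous since it is the affine projection of $f(t)$ along $H_{x_0 - x_1}$ onto the line $x_0 x_1$, and hence $g_1 = \norm{f - g}$ is continuous; as dyadic rationals are dense and the bound is uniform in $k$, it extends to all of $[0,1]$. The main obstacle is getting the recursion constant right --- specifically, verifying that the perpendicular offset of a parent chord, evaluated at the new node, is genuinely at most (the average of) the parents' $g_1$-values up to the negligible $\zetap{\cdot}$ distortion, rather than something larger; once that factor $\tfrac12$ is secured, the rest is assembling \Href{Corollary}{cor:waist_distance_weak_c} and \Href{Claim}{claim:sum}.
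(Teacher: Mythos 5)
Your overall strategy (dyadic induction, \Href{Corollary}{cor:waist_distance_weak_c}, \Href{Claim}{claim:sum}, then continuity) is the paper's, but the recursion at the heart of your argument is false, and this is a genuine gap. With $h_k=\sup\{g_1(t)\st t\in S_k\}$, the averaging step only gives $\norm{b-a}\le\frac{g_1(\text{left parent})+g_1(\text{right parent})}{2}\le h_{k-1}$: both parents of a deep node can have near-maximal deviation (only nodes adjacent to $0$ or $1$ have a parent with $g_1=0$), so ``pairing the two parents'' does not produce the factor $\tfrac12$. In fact the recursion $h_k\le \tfrac{h_{k-1}}{2}+R^\prime(\Delta_{k-1},R)$ cannot hold: since $S_{k-1}\subset S_k$ one trivially has $h_k\ge h_{k-1}$, and combining this with your recursion would force $h_k\le 2R^\prime(\Delta_k,R)\to 0$, i.e.\ $g_1\equiv 0$ on all dyadic points, meaning the curve lies on the line $x_0x_1$ --- already false for an arc of a circle of radius $R$ in the plane. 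Correspondingly, your unrolled bound $h_k\le 2\sum_{j=0}^{k-1}R^\prime(\Delta_j,R)/2^{k-j}$ is $O(2^{-k})$ by \Href{Claim}{claim:sum}, which is absurdly strong; you only land on the stated estimate because you then discard the $2^{-k}$ factor. (A smaller inaccuracy: the local term should be $2R^\prime$, not $R^\prime$, since both $\norm{a-f(t)}$ and $\norm{g(t)-b}$ contribute.)

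The repair is the undamped version of the same induction, which is what the paper does: from the triangle inequality $g_1(\text{new node})\le\norm{g(t)-b}+\norm{b-a}+\norm{a-f(t)}$, with $\norm{g(t)-b}\le\norm{f(t)-a}\le R^\prime(\Delta_{k-1},R)$ (the projection along $H_{x_0-x_1}$ onto the line $x_0x_1$ is $1$-Lipschitz) and $\norm{b-a}\le h_{k-1}$, one gets $h_k\le h_{k-1}+2R^\prime(\Delta_{k-1},R)$, hence $g_1\le 2\sum_{i=0}^{k}R^\prime(\Delta_i,R)$ on $S_k$; the deviations accumulate additively across levels rather than being halved. This still yields the claimed constant: multiplying \Href{Claim}{claim:sum} through by $2^k$ shows $\sum_{j=0}^{k}2^{j}R^\prime(\Delta_j,R)<\frac{24C_{sm}}{1-\mu^{s}/2^{s-1}}\left(\frac{\Delta_0}{R}\right)^{s-1}\Delta_0$, so a fortiori the undamped sum satisfies the same bound, and the factor $2$ gives the $48$. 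Your concluding density/continuity argument is fine as written.
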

\begin{proof}
We will greedy estimate $g_1(t),$ $t \in (0,1)$ from above.
We proceed by induction on $k$ and will prove the following
\begin{equation}
\label{eq:g_1_greedy_second_half}
g_1 \! \left(\frac{2j-1}{2^k}\right) \leq 
2\sum_{i=0}^k {R^\prime(\Delta_i, R)}
\quad \forall\ k \in \N,\ j \in [2^{k-1}].
\end{equation}

\Href{Lemma}{lem:weakly_convex_sausage_waist} yields the case 
$k=1.$  Suppose inequality \eqref{eq:g_1_greedy_second_half} holds for $k-1.$ Let us prove it for $k.$
Fix $j \in [2^{k-1}]$ and
denote 
\[a =  \frac{f\!\left(\frac{2j-2}{2^{k}}\right) + f\!\left(\frac{2j}{2^{k}}\right)}{2}
\quad \text{and} \quad 
b = x_0x_1 \cap (H_{x_0 - x_1} + a).
\]
Then, by the triangle inequality,
we get
\[
g_1 \! \left(\frac{2j-1}{2^k}\right) =
\norm{g \! \left(\frac{2j-1}{2^k}\right) - 
f \! \left(\frac{2j-1}{2^k}\right)} \leq
\norm{g \! \left(\frac{2j-1}{2^k}\right) - b} 
+ \norm{b -a} + 
\norm{a - f \! \left(\frac{2j-1}{2^k}\right)}.
\]
Again, by the triangle inequality, 
\[
\norm{b - a }
\leq
\frac{g_1 \!\left(\frac{2j-2}{2^{k}}\right) + g_1 \!\left(\frac{2j}{2^{k}}\right)}{2},
\]
and by construction, we have
\[
\norm{ g \! \left(\frac{2j-1}{2^k}\right) - b} \leq 
 \norm{f \! \left(\frac{2j-1}{2^k}\right) - a} 
 \leq R^\prime( \Delta_k, R).
\] 
Thus, 
\[
g_1 \! \left(\frac{2j-1}{2^k}\right) \leq 
2  \norm{f \! \left(\frac{2j-1}{2^k}\right) - a} + 
\frac{g_1 \!\left(\frac{2j-2}{2^{k}}\right) + g_1 \!\left(\frac{2j}{2^{k}}\right)}{2}
\leq 2 R^\prime( \Delta_k, R) + 2\sum_{i=0}^{k-1} {R^\prime(\Delta_i, R)}.
\]
Inequality \eqref{eq:g_1_greedy_second_half} is proven.

Thus, by \Href{Claim}{claim:sum} and continuity,
we have that
\[
g_1(t) \leq \frac{48 C_{sm}}{1 - \frac{\mu^{s}}{2^{s-1}}}
\cdot\left(\frac{\Delta_0}{R}\right)^{s-1} \Delta_0
\]
for any $t \in [0,1].$

\end{proof}
\Href{Lemma}{lem:final_estimate} says that the curve returned by the algorithm lies in a certain cylinder around line 
$x_0 x_1 .$ To understand curve's behavior near endpoints, we need a more subtle argument.
\begin{lem}\label{lem:final_estimates2}
Under the condition of  \Href{Theorem}{thm:rectifable_curve_inclusion}, additionally let
the modulus of smoothness of $X$ satisfy inequality \eqref{eq:modulus_smoothness_power_type} and 
$\frac{{\norm{x_0 -x_1}}}{R}$ satisfy assumption \ref{assump:3}.
For any $t \in \left[\frac12 , 1\right],$ inequality 
$g_2(t) \ge \frac{\Delta_0}{4}$
holds.
\end{lem}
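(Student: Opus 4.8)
The plan is to show that $g_2$ stays bounded below on $[1/2,1]$ by tracking how the dyadic midpoint construction can move the projection $g(t)$ of $f(t)$ onto the line $x_0x_1$. The natural quantity to control is the displacement of $g$ along the segment at each refinement step. First I would set up the recursion: for $t=\frac{2j-1}{2^k}\in S_k\setminus S_{k-1}$ with $j\in[2^{k-1}]$, the point $f(t)$ is a slice-projection of the midpoint of $\left[f\!\left(\frac{j-1}{2^{k-1}}\right)f\!\left(\frac{j}{2^{k-1}}\right)\right]$, so the hyperplane $H_{x_0-x_1}+f(t)$ passes through the midpoint $a$ of the segment $\left[f\!\left(\frac{2j-2}{2^{k}}\right)f\!\left(\frac{2j}{2^{k}}\right)\right]$ (using the endpoint relabelling as in the proof of \Href{Lemma}{lem:final_estimate}). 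Hence $g(t)=x_0x_1\cap(H_{x_0-x_1}+a)$, and I would compare this to $b=x_0x_1\cap(H_{x_0-x_1}+a')$ where $a'$ is instead the midpoint of $\left[g\!\left(\frac{2j-2}{2^{k}}\right)g\!\left(\frac{2j}{2^{k}}\right)\right]$ — a point that genuinely lies on the line $x_0x_1$, so its ``$g$-value'' is exactly $\frac{g_2(\frac{2j-2}{2^k})+g_2(\frac{2j}{2^k})}{2}$. The difference $\left|g_2(t)-\frac{g_2(\frac{2j-2}{2^k})+g_2(\frac{2j}{2^k})}{2}\right|$ is then bounded by $\norm{a-a'}$ projected onto the line direction, hence by $\frac{g_1(\frac{2j-2}{2^k})+g_1(\frac{2j}{2^k})}{2}\cdot(\text{const})$, which by \Href{Lemma}{lem:final_estimate} is $O\!\left((\Delta_0/R)^{s-1}\Delta_0\right)$, i.e.\ a controllably small perturbation of the ``average'' behaviour.

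Next I would exploit the fact that at $t=1/2$ we have $g_2(1/2)=\frac12\norm{x_0-x_1}=\frac{\Delta_0}{2}$ exactly (the hyperplane through $f(1/2)$ is $H_{x_0-x_1}+\frac{x_0+x_1}{2}$ by the definition of slice-projection), and at $t=1$ trivially $g_2(1)=\norm{x_0-x_1}=\Delta_0>\frac{\Delta_0}{4}$. So on the dyadic net $S_k\cap[1/2,1]$, the values $g_2$ are obtained from the two ``good'' values $\frac{\Delta_0}{2}$ and $\Delta_0$ by an iterated averaging scheme with per-step additive errors summing (across all scales, as in \Href{Claim}{claim:sum}) to at most $\frac{24 C_{sm}}{1-\mu^s/2^{s-1}}\left(\frac{\Delta_0}{R}\right)^{s-1}\Delta_0$, and by assumption~\ref{assump:3} together with the smallness of $\beta_I$ (shrinking $\beta_I$ further if necessary) this total error is at most $\frac{\Delta_0}{4}$. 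I would make this precise by an induction on $k$ analogous to~\eqref{eq:g_1_greedy_second_half}: show $g_2\!\left(\frac{2j-1}{2^k}\right)\ge \min\!\left\{g_2\!\left(\frac{2j-2}{2^k}\right),g_2\!\left(\frac{2j}{2^k}\right)\right\}-2R^\prime(\Delta_k,R)$ for the relevant $j$ (those whose subdivision stays inside $[1/2,1]$), which telescopes to $g_2\!\left(\frac{2j-1}{2^k}\right)\ge \frac{\Delta_0}{2}-2\sum_{i=0}^k R^\prime(\Delta_i,R)$; the last sum is bounded by the quantity in \Href{Claim}{claim:sum} (with $k$ large, the weights $2^{-(k-j)}$ work in our favour, or one simply uses the cruder $\sum_{i=0}^\infty R^\prime(\Delta_i,R)$ which converges by~\eqref{eq:Delta_bound_0}), hence is $<\frac{\Delta_0}{4}$ after possibly decreasing $\beta_I$. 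Passing to the limit $k\to\infty$ and using continuity of $f$ (already established) gives $g_2(t)\ge\frac{\Delta_0}{4}$ for all dyadic $t\in[1/2,1]$, and then for all $t\in[1/2,1]$.

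The main obstacle I anticipate is the bookkeeping for the boundary of the interval $[1/2,1]$: the induction on dyadic midpoints is clean only when both ``parents'' $\frac{2j-2}{2^k}$ and $\frac{2j}{2^k}$ already lie in $[1/2,1]$, which holds precisely for $j$ in a suitable sub-range, and one must check that $t=1/2$ and its right-neighbours in each $S_k$ are handled correctly (the leftmost midpoint in $[1/2,1]$ at level $k$ has $\frac{x_0+x_1}{2}$ as one parent, whose $g_2$-value is exactly $\frac{\Delta_0}{2}$, so there is no loss there). A secondary subtlety is that $g(t)$ is defined via the hyperplane through $f(t)$, not through the segment midpoint, so one must verify that the hyperplane $H_{x_0-x_1}+f(t)$ does pass through the midpoint $a$ — this is exactly the slice-projection property, since $f(t)\in H_{x_0-x_1}+a$ by construction of the algorithm. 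Once these two points are pinned down, the estimate reduces to the same convergent-sum argument already packaged in \Href{Claim}{claim:sum}, and the conclusion follows by choosing $\beta_I$ small enough that $\frac{24 C_{sm}}{1-\mu^s/2^{s-1}}\left(\frac{\Delta_0}{R}\right)^{s-1}<\frac18$.
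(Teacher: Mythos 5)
Your overall skeleton (induction over dyadic points of $[1/2,1]$ with base value $g_2(1/2)=\Delta_0/2$, a per-step loss of order $R^\prime(\Delta_k,R)$, summation of these losses across scales as in \Href{Claim}{claim:sum}, smallness of $\beta_I$, and continuity) is exactly the paper's argument, and the ``precise'' induction you state, $g_2\!\left(\frac{2j-1}{2^k}\right)\ge \min\{g_2(\text{parents})\}-2R^\prime(\Delta_k,R)$, is essentially the inequality \eqref{eq:g_2_greedy_second_half} proved there. However, the justification you give for the per-step control contains a genuine error. You claim that, by the slice-projection property, $f(t)\in H_{x_0-x_1}+a$ (where $a$ is the midpoint of the parent chord), hence $g(t)=x_0x_1\cap(H_{x_0-x_1}+a)$. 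This is false for every level $k\ge 2$: the slice-projection at level $k$ is taken in the hyperplane quasi-orthogonal to the \emph{parent chord} $f\!\left(\frac{j}{2^{k-1}}\right)-f\!\left(\frac{j-1}{2^{k-1}}\right)$, which after the first step is no longer parallel to $x_0-x_1$ (the $f$-values drift off the line). So $g(t)$ need not coincide with $b=x_0x_1\cap(H_{x_0-x_1}+a)$, and the discrepancy $\norm{g(t)-b}$ is precisely the quantity that must be estimated; the paper bounds it by $\norm{f(t)-a}\le R^\prime(\Delta_k,R)$, using that $f(t)-g(t)$ and $a-b$ both lie in $H_{x_0-x_1}$ (so their images under the norming functional $p$ of $x_0-x_1$ vanish) together with \Href{Corollary}{cor:waist_distance_weak_c} applied to the parent segment.

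Your substitute error analysis does not repair this. Comparing $a$ with the midpoint $a'$ of $\left[g\!\left(\frac{2j-2}{2^k}\right)g\!\left(\frac{2j}{2^k}\right)\right]$ and bounding by the parents' $g_1$-values is both off target and vacuous: since $f(t')-g(t')\in H_{x_0-x_1}$ for each parent, the $p$-projection of $a-a'$ is zero, so this comparison contributes no error at all and, in particular, does not see the actual deviation $\norm{g(t)-b}$. Moreover, a per-step error of size $O\!\left((\Delta_0/R)^{s-1}\Delta_0\right)$ uniformly in $k$ (which is what invoking \Href{Lemma}{lem:final_estimate} would give) is not summable over infinitely many scales; one needs the scale-dependent bound $R^\prime(\Delta_k,R)$, which decays geometrically by \eqref{eq:Delta_bound_0}. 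Once the per-step inequality is derived as above, the rest of your plan (telescoping from $g_2(1/2)=\Delta_0/2$, bounding $\sum_i R^\prime(\Delta_i,R)$ by a convergent geometric series, taking $\beta_I$ small enough that the total loss is at most $\Delta_0/4$, and concluding by continuity) is correct and matches the paper; your attention to the boundary indices $j>2^{k-2}$, ensuring both parents lie in $[1/2,1]$, is also the right bookkeeping.
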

\begin{proof}
We will greedy estimate $g_2(t),$ $t \in \left(\frac12 , 1\right)$ from below.
We proceed by induction on $k$ and will prove the following
\begin{equation}
\label{eq:g_2_greedy_second_half}
g_2 \! \left(\frac{2j-1}{2^k}\right) \ge
\frac{\Delta_0}{2}-\sum_{i=0}^k {R^\prime(\Delta_i, R)}
\quad \forall\ k \in \N,\ j \in [2^{k-1}], \ j>2^{k-2}.
\end{equation}

The construction of the curve and definition of $g_2(\cdot)$ yield that 
\[
g_2\left(\frac12\right)=\frac{\Delta_0}{2}\ge \frac{\Delta_0}{2}- {R^\prime(\Delta_0, R)}.
\] 

Thus, we have the induction basis. Suppose now that \eqref{eq:g_2_greedy_second_half} holds for some $k-1$. Let us now prove that it holds for $k$. 

Fix $j \in [2^{k-1}]$, $j>2^{k-2}$ and
denote 
\[a =  \frac{f\!\left(\frac{2j-2}{2^{k}}\right) + f\!\left(\frac{2j}{2^{k}}\right)}{2}
\quad \text{and} \quad 
b = x_0x_1 \cap (H_{x_0 -x_1} + a).
\]
Note that $b $ and $g(t)$ lie on the line $x_0 x_1.$
\begin{center}
\begin{figure}[t]
		\begin{picture}(100,130)
		 \put(-160,10){\includegraphics[scale=1.2]{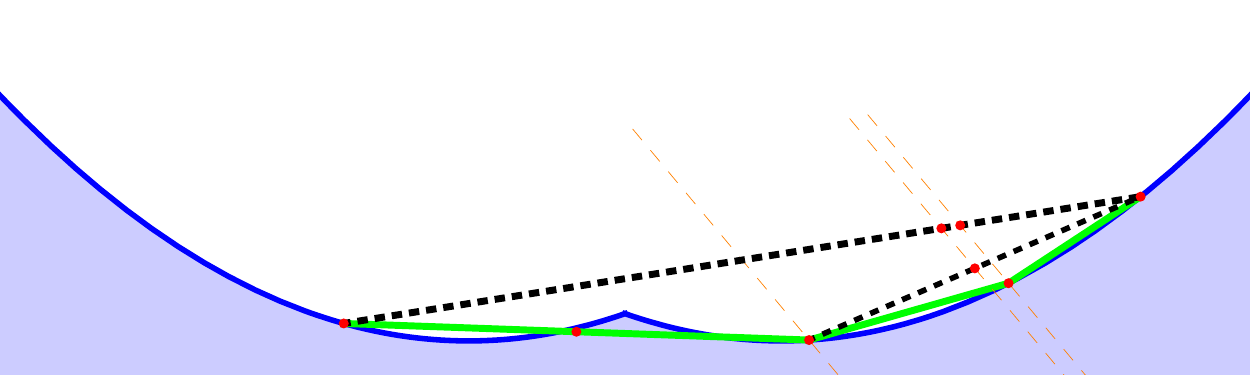}}
		 		 \put(-44 , 35){$x_0$}
		 \put(225 , 78){$x_1$}
		 \put(188 , 30){$f\!\left(\frac{3}{4} \right)$}
		 \put(100 , 3){$f\!\left(\frac{1}{2} \right)$}
		  \put(168 , 72){$g\!\left(\frac{3}{4} \right)$}
		   \put(153 , 63){$b$}
		    \put(162 , 46){$a$}
		     \put(80 , 93){$H_{x_0 - x_1} + a$}
		 \end{picture}
\caption{ Illustration for the proof of \Href{Lemma}{lem:final_estimate}. Here, $k=2$ and $j =2.$ Thus, 
$2a = f\!\left(\frac{1}{2}\right) + f(1) = f\!\left(\frac{1}{2}\right) + x_1$. The dashed lines denotes hyperplanes parallel to $H_{x_0 - x_1}.$ }
\end{figure}
\end{center}
By the triangle inequality, we obtain that
\[
g_2 \! \left(\frac{2j-1}{2^k}\right) =
\norm{x_0 - g \! \left( \frac{2j-1}{2^k}\right)} 
\ge
\norm{x_0 - b} 
-\norm{g \! \left(\frac{2j-1}{2^k}\right) - b} 
.
\]
By the definition of $b$, we have  
\[
\norm{b - x_0 }
=\frac{g_2 \!\left(\frac{2j-2}{2^{k}}\right) + g_2 \!\left(\frac{2j}{2^{k}}\right)}{2}.
\]
As $f(t)-g(t)$ is quasi-perpendicular to $x_0-x_1$, we have that 
\[
\norm{g \! \left(\frac{2j-1}{2^k}\right) - b}\leq \norm{f\! \left(\frac{2j-1}{2^k}\right)-a}.
\] 
Thus, 
\[
g_2\! \left(\frac{2j-1}{2^k}\right) \ge 
\frac{g_2 \!\left(\frac{2j-2}{2^{k}}\right) + g_2 \!\left(\frac{2j}{2^{k}}\right)}{2}
-  \norm{f \! \left(\frac{2j-1}{2^k}\right) - a} 
\ge  \frac{\Delta_0}{2} - R^\prime( \Delta_k, R) -\sum_{i=0}^{k-1} {R^\prime(\Delta_i, R)}.
\]
Inequality \eqref{eq:g_2_greedy_second_half} is proven.

\Href{Claim}{claim:sum}, the assumptions on $\Delta_0$, and the continuity of $g(\cdot)$ imply that $g_2(t)\ge\frac{\Delta_0}{4}$ for all $t\in \left[\frac12,1\right].$
\end{proof}

\begin{cor}
\label{cor:inclusion_second_half}
Under the condition of  \Href{Theorem}{thm:rectifable_curve_inclusion}, additionally let
the modulus of smoothness of $X$ satisfy inequality \eqref{eq:modulus_smoothness_power_type} and 
$\frac{{\norm{x_0 -x_1}}}{R}$ satisfy assumption \ref{assump:3}.
For any $t \in \left[\frac12 , 1\right],$ 
the set 
$\{f(t) \st t \in [\frac{1}{2}, 1] \}$ is a subset of
\[
\mathrm{cone} \left(x_0, \BF{L_1 \Delta_0}{x_1}\cap (H_{x_0 -x_1} + x_1)\right),
\] 
where 
\[
L_1 = \frac{400 C_{sm}}{1 - \frac{\mu^{s}}{2^{s-1}}} t\left(\frac{\Delta_0}{R}\right)^{s-1}.
\]
\end{cor}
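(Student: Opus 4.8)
The plan is to convert the two quantitative estimates already proved — that the curve lies in a thin cylinder around the line $x_0 x_1$ (\Href{Lemma}{lem:final_estimate}) and that on the second half the foot $g(t)$ of the quasi-perpendicular from $f(t)$ stays at distance $\ge \Delta_0/4$ from $x_0$ (\Href{Lemma}{lem:final_estimates2}, with $\Delta_0 = \norm{x_0-x_1}$) — into the asserted cone inclusion, by one elementary similar-triangles computation carried out with the functional $p$ that attains its norm on $x_1 - x_0$.

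First I would record the analytic description of $g$. Since $f$ is continuous (established in the proof of \Href{Theorem}{thm:length}) and $g(t)$ is by definition the intersection of $H_{x_0-x_1}+f(t)$ with the line $x_0 x_1$, one gets $g(t) = x_0 + \sigma(t)(x_1-x_0)$ with $\sigma(t) = \iprod{p}{f(t)-x_0}/\Delta_0$; hence $\sigma$ is continuous, $g_2(t) = \norm{x_0 - g(t)} = |\sigma(t)|\,\Delta_0$, and $\iprod{p}{f(t)-g(t)} = 0$. Because $\sigma(\tfrac12) = \tfrac12$, $\sigma(1) = 1$, and $|\sigma(t)| = g_2(t)/\Delta_0 \ge \tfrac14$ on $[\tfrac12,1]$ by \Href{Lemma}{lem:final_estimates2}, the continuous function $\sigma$ cannot change sign there, so $\sigma(t) > 0$ and $\iprod{p}{f(t)-x_0} = g_2(t) > 0$ for every $t \in [\tfrac12,1]$; in particular $f(t) \neq x_0$.

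Then, for a fixed $t \in [\tfrac12,1]$, I would look at the ray $\{x_0 + \la(f(t)-x_0) \st \la \ge 0\}$ and, splitting $f(t) - x_0 = (g(t)-x_0) + (f(t)-g(t))$ with the first summand parallel to $x_1-x_0$ and the second lying in $H_{x_0-x_1}$, evaluate $p$ to see that the ray meets the hyperplane $H_{x_0-x_1}+x_1$ exactly at
\[
h(t) = x_1 + \frac{\Delta_0}{g_2(t)}\bigl(f(t)-g(t)\bigr), \qquad \la = \frac{\Delta_0}{g_2(t)} > 0 .
\]
Therefore $\norm{h(t)-x_1} = \tfrac{\Delta_0}{g_2(t)}\,\norm{f(t)-g(t)} = \tfrac{\Delta_0}{g_2(t)}\,g_1(t) \le 4\,g_1(t)$ by the lower bound on $g_2$, and feeding in the bound on $g_1$ from \Href{Lemma}{lem:final_estimate} gives $\norm{h(t)-x_1} \le \tfrac{192\,C_{sm}}{1 - \mu^s/2^{s-1}}\bigl(\tfrac{\Delta_0}{R}\bigr)^{s-1}\Delta_0 \le L_1 \Delta_0$ (the stated $L_1$ is comfortably larger, even with the harmless factor $t\ge\tfrac12$). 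Thus $h(t) \in \BF{L_1\Delta_0}{x_1} \cap (H_{x_0-x_1}+x_1)$, and since $f(t) - x_0 = \tfrac{g_2(t)}{\Delta_0}\,(h(t)-x_0)$ with a nonnegative coefficient, $f(t) \in \mathrm{cone}\bigl(x_0,\ \BF{L_1\Delta_0}{x_1}\cap(H_{x_0-x_1}+x_1)\bigr)$. As $t$ was arbitrary in $[\tfrac12,1]$, this is the claim.

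I expect the only mildly delicate step to be the sign argument in the second paragraph — guaranteeing that $g(t)$ stays on the $x_1$-side of $x_0$, so that the ray from $x_0$ through $f(t)$ crosses the hyperplane through $x_1$ on the correct side and the parameter $\la$ is positive. This is forced by the continuity of $\sigma$ together with the uniform lower bound $g_2(t) \ge \Delta_0/4$; everything else is the bookkeeping of a single similar-triangles identity, so I would not expect real difficulty there.
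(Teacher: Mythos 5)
Your proposal is correct and follows exactly the route the paper intends: the corollary is stated as an immediate consequence of Lemma \ref{lem:final_estimate} (the cylinder bound $g_1(t)\le \frac{48 C_{sm}}{1-\mu^s/2^{s-1}}(\Delta_0/R)^{s-1}\Delta_0$) and Lemma \ref{lem:final_estimates2} (the lower bound $g_2(t)\ge\Delta_0/4$ on $[\tfrac12,1]$), combined through the similarity/ray argument you carry out, which inflates the deviation by the factor $\Delta_0/g_2(t)\le 4$ and lands within $192\le 400t$ of the stated constant. Your explicit treatment of the sign of $\sigma$ and of the intersection point $h(t)$ fills in details the paper leaves implicit, but the approach is the same.
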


Define 
\[
r_i = \frac{1}{1 - \frac{\mu^s}{2^{s-1}}} \cdot \frac{400 C_{sm}}{1 - \frac{\mu^s}{2^{s-1}}}
\cdot\left(\frac{\Delta_i}{R}\right)^{s-1}\Delta_i 
\quad \text{and} \quad 
G_i = \mathrm{cone}\left(x_0, 
\BF{r_i}{f\left(\frac{1}{2^{i-1}}\right)}\right),
\ i \in \N.
\]
\begin{lem}\label{lem:final_estimate2}
Under the condition of  \Href{Theorem}{thm:rectifable_curve_inclusion}, additionally let
the modulus of smoothness of $X$ satisfy inequality \eqref{eq:modulus_smoothness_power_type} and 
$\frac{{\norm{x_0 -x_1}}}{R}$ satisfy assumption \ref{assump:3}. Then
$f([0,1]) \subset G_1.$ 
\end{lem}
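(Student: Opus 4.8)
The plan is to bootstrap from the three previous results: Corollary~\ref{cor:inclusion_second_half} handles the ``right half'' $f([\tfrac12,1])$, and the cone $G_1$ should absorb the whole curve once we understand the recursive structure of the construction. The starting observation is that Algorithm~\ref{algorithm} is self-similar: the piece $f([0,\tfrac1{2^{i-1}}])$ is itself the curve produced by the algorithm applied to the endpoints $x_0$ and $f\!\left(\tfrac1{2^{i-1}}\right)$, whose separation is at most $\Delta_{i-1}$. Hence everything proven for the right half of the full curve applies, after rescaling, to the right half of each such sub-curve; in particular $f\!\left(\left[\tfrac1{2^i},\tfrac1{2^{i-1}}\right]\right)$ sits inside a cone with apex $x_0$ over a small ball around $f\!\left(\tfrac1{2^{i-1}}\right)$, which is exactly the cone $G_i$ up to the choice of the constant in $r_i$.

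First I would record the self-similarity precisely and note that $\Delta_i \le (\mu/2)^i \Delta_0$ by \eqref{eq:Delta_bound_0}, so assumptions \ref{assump:1}--\ref{assump:3} continue to hold for each sub-curve; this legitimizes invoking Corollary~\ref{cor:inclusion_second_half} at every scale. Applying it to the endpoints $x_0, f\!\left(\tfrac1{2^{i-1}}\right)$ gives
\[
f\!\left(\left[\tfrac1{2^i},\tfrac1{2^{i-1}}\right]\right) \subset
\mathrm{cone}\!\left(x_0,\ \BF{L_1 \Delta_{i-1}}{f\!\left(\tfrac1{2^{i-1}}\right)} \cap \bigl(H_{x_0 - f(1/2^{i-1})} + f\!\left(\tfrac1{2^{i-1}}\right)\bigr)\right) \subset G_i,
\]
where the last inclusion holds because $r_i$ was defined with exactly the extra factor $\tfrac1{1-\mu^s/2^{s-1}}$ needed to dominate $L_1 \Delta_{i-1}$ scaled by the ratio $\Delta_{i-1}/\Delta_i$ (note $(\Delta_{i-1}/R)^{s-1}\Delta_{i-1} \le (\mu/2)^{-(s-1)}\cdot\ldots$ — the geometric factors collapse into the displayed constant). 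Since $f([0,1]) = \bigcup_{i\ge 1} f\!\left(\left[\tfrac1{2^i},\tfrac1{2^{i-1}}\right]\right) \cup \{x_0\}$ and $x_0 \in G_1$ trivially, it remains only to show $G_i \subset G_1$ for all $i$.

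The heart of the matter, and the step I expect to be the main obstacle, is the nesting $G_i \subset G_1$. This is not automatic: although $r_i \to 0$, the ball $\BF{r_i}{f(1/2^{i-1})}$ is centered at a point that itself drifts toward $x_0$, so one must check that the narrowing cone $G_i$ stays inside $G_1$ despite moving its base closer to the apex. The natural way is induction: show $G_{i+1} \subset G_i$, using that $f\!\left(\tfrac1{2^i}\right) \in G_{i+1}$'s base region and that $f\!\left(\tfrac1{2^i}\right) \in \BF{r_i}{f(1/2^{i-1})}$ — this last containment follows from Corollary~\ref{cor:inclusion_second_half} (or from Lemma~\ref{lem:weakly_convex_sausage_waist} applied at scale $i$) giving $\norm{f(1/2^i) - f(1/2^{i-1})}$ bounded in terms of $\Delta_{i-1}$, comfortably below $r_i$ once the power-type estimate is inserted. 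Then a cone with apex $x_0$ over a ball of radius $r_{i+1}$ around a point lying inside the radius-$r_i$ ball of $G_i$'s base, with $r_{i+1}$ small enough relative to $r_i$ and to the distance $\norm{x_0 - f(1/2^i)}$ (controlled below by Lemma~\ref{lem:final_estimates2} applied at scale $i$), is contained in $G_i$ by a direct convexity/similarity argument. Chaining the inclusions gives $G_i \subset G_1$, completing the proof. The bookkeeping of which power of $\mu/2$ each $\Delta_i$ contributes, and verifying that the constant $\tfrac{400 C_{sm}}{1-\mu^s/2^{s-1}}$ with its extra geometric-series prefactor is genuinely large enough to make every nesting step valid, is the routine-but-delicate part I would defer to Section~\ref{sec:tech_lemmas}-style computation.
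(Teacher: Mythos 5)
Your overall architecture is the same as the paper's: use the self-similarity of \Href{Algorithm}{algorithm} together with \Href{Corollary}{cor:inclusion_second_half} at each scale to get $f([1/2^i,1/2^{i-1}])\subset G_i$, and then reduce everything to the nesting $G_{i+1}\subset G_i$. However, the step you identify as the heart of the matter contains a genuine error: the containment $f\!\left(\tfrac{1}{2^i}\right)\in \BF{r_i}{f\!\left(\tfrac1{2^{i-1}}\right)}$ is false, and no power-type estimate will rescue it. By self-similarity, $f\!\left(\tfrac1{2^i}\right)$ is a slice-projection of the \emph{midpoint} of $\left[x_0,\,f\!\left(\tfrac1{2^{i-1}}\right)\right]$, so $\norm{f\!\left(\tfrac1{2^i}\right)-f\!\left(\tfrac1{2^{i-1}}\right)}$ is of order $\tfrac12\norm{x_0-f\!\left(\tfrac1{2^{i-1}}\right)}$, i.e.\ comparable to $\Delta_i$; whereas $r_i$ carries the extra factor $\left(\tfrac{\Delta_i}{R}\right)^{s-1}$ and is therefore much smaller than $\Delta_i$ in the regime $\tfrac{\norm{x_0-x_1}}{R}<\beta_I$. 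Consequently the induction step as you describe it (a cone over a small ball centered at a point lying \emph{inside} the base ball of $G_i$) cannot be run: the center of the base ball of $G_{i+1}$ lies far outside the base ball of $G_i$, at roughly half the distance to the apex.

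The paper's fix, which is the actual content of this lemma, is to compare $f\!\left(\tfrac1{2^i}\right)$ not with $f\!\left(\tfrac1{2^{i-1}}\right)$ but with the midpoint $a_i$ of $\left[x_0, f\!\left(\tfrac1{2^{i-1}}\right)\right]$: \Href{Corollary}{cor:waist_distance_weak_c} (applied at scale $i$) gives $\norm{f\!\left(\tfrac1{2^i}\right)-a_i}\le R^\prime(\Delta_{i-1},R)$, which \emph{is} of the small order $\left(\tfrac{\Delta_{i-1}}{R}\right)^{s-1}\Delta_{i-1}$. Then the triangle inequality gives $G_{i+1}\subset \mathrm{cone}\left(x_0, \BF{r_{i+1}+\norm{f(1/2^i)-a_i}}{a_i}\right)$, and the homothety with center $x_0$ and ratio $2$ (which maps $a_i$ to $f\!\left(\tfrac1{2^{i-1}}\right)$) converts this into a cone over a ball of doubled radius centered at $f\!\left(\tfrac1{2^{i-1}}\right)$; the inequality one must verify is $2r_{i+1}+2R^\prime(\Delta_{i-1},R)\le r_i$, and it is exactly this factor-of-two loss plus the geometric-series bound of \Href{Claim}{claim:sum} that the prefactor $\frac{1}{1-\mu^s/2^{s-1}}\cdot\frac{400 C_{sm}}{1-\mu^s/2^{s-1}}$ in $r_i$ is built to absorb. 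Your appeal to $\norm{x_0-f(1/2^i)}$ being bounded below (via \Href{Lemma}{lem:final_estimates2} at scale $i$) gestures at the right geometry, but without the midpoint comparison and the explicit similarity step the nesting inequality you defer to a routine computation is not the right one, so the proposal as written has a real gap at its central step.
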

\begin{proof}
By inequality \eqref{eq:Delta_bound_0} and assumption \eqref{assump:3}, we have that $\Delta_{i} \leq \Delta_{i-1}.$
Hence, $R^\prime (\Delta_i, R) \leq R^\prime (\Delta_{i-1}, R).$ Thus, Corollary \ref{cor:inclusion_second_half} implies that
$f([1/2^{i}, 1/{2^{i-1}}]) \subset G_i$ for every $i \in \N.$
Thus, to prove the lemma, it suffices to show that $G_i \subset G_1$ for all $i \in \N.$
By construction, the curve $f([0, 1/2^i])$ coincides with the
curve returned by  \Href{Algorithm}{algorithm} applied to points $x_0$ and $f(1/2^{i}).$

Let us prove that $G_{i}\subset G_{i-1}$.
By assumption \eqref{assump:3} and by inequality \eqref{eq:Delta_bound_0},  
\begin{equation}
\label{eq:radii_comparison}
r_i \leq \frac{1}{2} \cdot \frac{\mu^s}{2^{s-1}} r_{i-1}.
\end{equation}

Denote the midpoint of a segment $\left[x_0, f(1/{2^{i-1}})\right]$ by $a_i.$ By the triangle inequality,
\[
G_{i+1} \subset \mathrm{cone} (x_0, \BF{r_i + \norm{f(1/2^i) - a_i}}{a_i}) \quad \text{for all} \ i \in \N.
\]
By similarity,
\[
 \mathrm{cone} (x_0, \BF{r_i + \norm{f(1/2^i) - a_i}}{a_i}) =
\mathrm{cone} (x_0, \BF{2r_i + 2\norm{f(1/2^i) - a_i}}{f(1/2^{i-1})})   \quad \text{for all} \ i \in \N.
\]
Applying \Href{Corollary}{cor:waist_distance_weak_c} with
$x_0 = x_0$ and $x_1 = f(1/2^{i}), $ we get that
\[
\norm{f(1/2^i) - a_i} \leq R^\prime (\norm{f(1/2^{i-1} - x_0)}, R) \leq R^\prime (\Delta_i, R) .
\]
Since $\Delta_i \leq \Delta_0 \leq \beta_I$ and by \Href{Claim}{claim:sum}, 
\[
\norm{f(1/2^i) - a_i} \leq 
 \frac{24 C_{sm}}{1 - \frac{\mu^{s}}{2^{s-1}}} \left(\frac{\Delta_{i-1}}{R}\right)^{s-1} \Delta_{i} 
 \leq
 \left(1 - \frac{\mu^{s}}{2^{s-1}}\right) r_{i-1}.
\]
This and inequality \eqref{eq:radii_comparison} imply that
\[
2r_i + 2\norm{f(1/2^i) - a_i} \leq r_{i-1}. 
\]
Consequently, by the triangle inequality, one has 
\[
G_{i+1} \subset 
\mathrm{cone} (x_0, \BF{r_i + \norm{f(1/2^i) - a_i}}{a_i})
\subset G_i \quad \text{for all} \ i \in \N.
\]
Hence,  we conclude that $G_i \subset G_1$ for all $i \in \N,$
completing the proof of the lemma.
\end{proof}
 
 By symmetry and by \Href{Lemma}{lem:final_estimate2}, we get the following result which implies \Href{Theorem}{thm:rectifable_curve_inclusion}.
\begin{thm}
Under the condition of  \Href{Theorem}{thm:rectifable_curve_inclusion}, additionally let
the modulus of smoothness of $X$ satisfy inequality \eqref{eq:modulus_smoothness_power_type} and $\beta_I$ 
satisfy inequality
\[
\frac{\nu^s}{2^{s-1}} < 1,
\quad
\text{where } \  
\nu = 
\zetap{\frac{2 R^\prime(\beta_I, R)}{\beta_I}}.
\]
Then the curve $\gamma$ returned by \Href{Algorithm}{algorithm}
satisfies inclusion
\begin{equation*}
 \gamma \subset  
 \conv \left\{
 x_0, \BF{r}{\frac{x_0 + x_1}{2}}, x_1 \right\},
\end{equation*}
 where 
\[
r = \frac{1}{1 - \frac{\mu^s}{2^{s-1}}} \cdot \frac{400 C_{sm}}{1 - \frac{\mu^s}{2^{s-1}}} \norm{x_0 - x_1 } 
\left(\frac{\norm{x_0 - x_1}}{R}\right)^{s-1}. 
\]
\end{thm}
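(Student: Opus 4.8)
The plan is to combine \Href{Lemma}{lem:final_estimate2} with its mirror image and then carry out a short convex-geometric computation. \Href{Lemma}{lem:final_estimate2} already gives $\gamma = f([0,1]) \subseteq G_1 = \mathrm{cone}\bigl(x_0, \BF{r_1}{x_1}\bigr)$. The slice-projections, the polygonal approximations $\gamma_i$, and hence the quantities $\Delta_i$ and $\mu$ are all invariant under interchanging $x_0$ and $x_1$ (in particular $H_{x_0-x_1} = H_{x_1-x_0}$), so the argument proving \Href{Lemma}{lem:final_estimate2}, applied with the roles of $x_0$ and $x_1$ exchanged, yields $\gamma \subseteq \tilde G_1 := \mathrm{cone}\bigl(x_1, \BF{r_1}{x_0}\bigr)$ as well. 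Thus $\gamma \subseteq G_1 \cap \tilde G_1$, and everything reduces to the purely geometric inclusion
\[
G_1 \cap \tilde G_1 \subseteq \conv\Bigl\{x_0, x_1, \BF{r_1}{\tfrac{x_0+x_1}{2}}\Bigr\}.
\]
For this we use that $\beta_I$ — in addition to the stated condition on $\nu$, assumptions \ref{assump:1}--\ref{assump:3}, and the hypotheses of the earlier lemmas — is chosen small enough that $r_1 < \tfrac14 \norm{x_0-x_1}$; this is possible since $r_1/\norm{x_0-x_1} = \tfrac{400 C_{sm}}{(1-\mu^s/2^{s-1})^2}\bigl(\tfrac{\norm{x_0-x_1}}{R}\bigr)^{s-1}\bigl(\tfrac{\Delta_1}{\Delta_0}\bigr)^s$, whose right-hand side tends to $0$ as $\norm{x_0-x_1}/R \to 0$ (because $s>1$ and $\mu \to 1$ by \Href{Lemma}{lem:small_mu}).

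For the geometric step, first record the elementary identity
\[
\conv\Bigl\{x_0, x_1, \BF{\rho}{\tfrac{x_0+x_1}{2}}\Bigr\} = \Bigl\{ (1-\beta)x_0 + \beta x_1 + w \st \beta \in [0,1],\ \norm{w} \le 2\min\{\beta, 1-\beta\}\,\rho \Bigr\},
\]
obtained by writing a point of the ball as $\tfrac{x_0+x_1}{2} + v$ with $\norm{v}\le\rho$ and collecting the coefficients of $x_0$ and $x_1$. Let $p \in G_1 \cap \tilde G_1$. A cone over a convex set is the union of the rays through its points, so $p = (1-t)x_0 + t b_1 = (1-s)x_1 + s b_0$ for some $t, s \ge 0$, $b_1 \in \BF{r_1}{x_1}$, $b_0 \in \BF{r_1}{x_0}$. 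Putting $w_1 = b_1 - x_1$, $w_0 = b_0 - x_0$ (so $\norm{w_i}\le r_1$) and subtracting the two expressions for $p$ gives $(1-t-s)(x_0-x_1) = s w_0 - t w_1$, hence $\enorm{1-t-s}\,\norm{x_0-x_1} \le (s+t)r_1$ and therefore $t+s \le \bigl(1 - r_1/\norm{x_0-x_1}\bigr)^{-1}$. In particular $\min\{t,s\} \le \tfrac12\bigl(1 - r_1/\norm{x_0-x_1}\bigr)^{-1} < 1$.

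Assume, say, $t \le s$, so $t < 1$, and use the first representation: $p = (1-t)x_0 + t x_1 + t w_1$, i.e.\ $p$ has the form above with $\beta = t$ and $w = t w_1$, $\norm{w} \le t r_1$. If $t \le \tfrac12$, then $t r_1 \le 2t\,r_1 = 2\min\{t,1-t\}\,r_1$. If $t \in (\tfrac12, 1)$, then from $t \le \tfrac12\bigl(1 - r_1/\norm{x_0-x_1}\bigr)^{-1}$ and $r_1/\norm{x_0-x_1} < \tfrac14$ one gets $\tfrac{t}{1-t} \le \bigl(1 - 2 r_1/\norm{x_0-x_1}\bigr)^{-1} < 2$, so again $t r_1 \le 2(1-t)\,r_1 = 2\min\{t,1-t\}\,r_1$. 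In both cases the identity above (with $\rho = r_1$) places $p$ in $\conv\bigl\{x_0, x_1, \BF{r_1}{\tfrac{x_0+x_1}{2}}\bigr\}$. Finally, $\Delta_1 \le \Delta_0 = \norm{x_0-x_1}$ by \eqref{eq:Delta_bound_0} together with $\mu < 2$, so
\[
r_1 = \frac{400 C_{sm}}{(1-\mu^s/2^{s-1})^2}\cdot\frac{\Delta_1^{s}}{R^{s-1}} \le \frac{400 C_{sm}}{(1-\mu^s/2^{s-1})^2}\cdot\frac{\Delta_0^{s}}{R^{s-1}} = r,
\]
and enlarging the ball gives $\gamma \subseteq G_1 \cap \tilde G_1 \subseteq \conv\bigl\{x_0, x_1, \BF{r}{\tfrac{x_0+x_1}{2}}\bigr\}$, which is the assertion.

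The step I expect to be the main obstacle — indeed the only real subtlety — is controlling the cone parameters $t$ and $s$: a priori $G_1$ extends beyond $x_1$ and $\tilde G_1$ beyond $x_0$, so one of $t, s$ may exceed $1$, which would break the convex-combination bookkeeping in the identity above. The relation $(1-t-s)(x_0-x_1) = s w_0 - t w_1$ is precisely what rescues the argument: it pins $t+s$ to within $O(r_1/\norm{x_0-x_1})$ of $1$, so that $\min\{t,s\} < 1$, and one then works with whichever representation of $p$ carries the smaller parameter. The remaining estimates are routine.
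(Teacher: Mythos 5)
Your proposal is correct and follows essentially the same route as the paper, whose proof of this theorem is exactly ``by symmetry and \Href{Lemma}{lem:final_estimate2}'': you apply that lemma twice (once with $x_0$ and $x_1$ interchanged, legitimately, since the slice-projection construction and the quantities $\Delta_i$, $\mu$ are symmetric) and then intersect the two cones. The only additions are the explicit convex-geometric computation showing $G_1 \cap \tilde G_1 \subset \conv\{x_0, x_1, \BF{r_1}{\frac{x_0+x_1}{2}}\}$, which the paper leaves implicit, and the harmless further smallness requirement $r_1 < \tfrac14\norm{x_0-x_1}$ imposed on $\beta_I$, which is admissible because $\beta_I$ is a constant the theorem allows one to choose.
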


\section{Proofs of   technical results}
\label{sec:tech_lemmas}
\begin{proof}[Proof of  \Href{Claim}{claim:nasty}]
Denote $\zeta^{-1} = \left(\zeta_X^{+}\right)^{-1}(2).$
The definition of $\zetap{\cdot}$ implies that 
\begin{equation}
\label{eq:inequality_zeta}
 1 + \tau \geq \zetap{\tau} \geq 1.
\end{equation} 
Since $\omxi{\cdot}$ is an increasing function, one has  $ \omxi{3/40} <  \omxi{1/8}.$ Thus,
to show that $\beta_L \leq \omxi{1/8}, $ and it suffices to show that $\frac{\zeta^{-1}}{2 + \zeta^{-1}}\leq \frac{3}{5}.$
By \eqref{eq:inequality_zeta} and by monotonicity, we obtain that 
$$\frac{\zeta^{-1}}{2 + \zeta^{-1}} \leq
\left.\frac{\tau+1}{2+\tau+1}\right|_{\tau=2}=\frac{3}{5}.$$
Since $ R^\prime(\tau, R) = \tau \frac{8   \omx{\frac{{\tau}}{R}}}
{1 - 8   \omx{\frac{\tau}{R}}}$ (see \eqref{eq:def_Rprime}), we have 
\[
\zetap{\frac{2 R^\prime(\tau, R)}{\tau}}=\zetap{\frac{16\omx{\frac{\tau}{R}}}{1 - 8   \omx{\frac{\tau}{R}}}}.
\]
Since $\frac{\tau}{R} < \beta_L$ and functions $\zetap{\cdot}$ and $\omx{\cdot}$ are increasing, we obtain that
\[
\zetap{\frac{16\omx{\frac{\tau}{R}}}{1 - 8   \omx{\frac{\tau}{R}}}}<
\zetap{\frac{16\omx{\beta_L}}{1-8\omx{\beta_L}}}=
\zetap{\frac{\frac{16\zeta_X^{-1}(2)}{8(2 + \zeta_X^{-1}(2))}}{1-\frac{\zeta_X^{-1}(2)}{(2 + \zeta_X^{-1}(2))}}}=2
\]
completing the proof of  \Href{Claim}{claim:nasty}.

Let us prove  inequality $\omxi{1/8} < 2.$
By monotonicity of $\omxi{\cdot}$ and by the definition
of $\omx{\cdot},$ we have the following chain
\[
\omxi{1/8} < 2  \ \ \Leftrightarrow \ \ 
1/8 < \omx{2}  \ \ \Leftrightarrow \ \ 
1/4  < \mglx{2}.
\]
The last inequality follows from \eqref{eq:day_nord_mglx}. 
\end{proof}

\begin{proof}[Proof of  \Href{Claim}{claim:bounder}]
By \Href{Proposition}{prop:zetap_mglx_equivalence} and the definition of $R^\prime(\tau, R)$  (see \eqref{eq:def_Rprime}), we get 
\[
\psif{\tau} = \zetap{\frac{2 R^\prime(\tau, R)}{\tau}} -1\leq\mglx{{4R^\prime(\tau, R)}{\tau}}=
\mglx{\frac{32\omx{\frac{\tau}{R}}}{1 - 8\omx{\frac{\tau}{R}}}}.
\]
 \Href{Claim}{claim:nasty} and inequality $\mglx{\tau} \leq C_{sm} \tau^s$ imply that

\[
\mglx{\frac{32\omx{\frac{\tau}{R}}}{1 - 8\omx{\frac{\tau}{R}}}}\leq\mglx{\frac{32\frac{\mglx{\frac{\tau}{R}}}{\frac{\tau}{R}}}{1 -8\omx{C_2}}}\leq\mglx{\frac{32C_{sm}\left(\frac{\tau}{R}\right)^{s-1}}{1-\frac{3}{5}}}\leq \left(\frac{16}{5}\right)^s C_{sm}^{s+1}\left(\frac{\tau}{R}\right)^{s(s-1)}.
\]
Thus,
$$\sum_0^\infty \psif{\left(\frac{\mu}{2}\right)^i \Delta_{0} }\leq \left(\frac{16}{5}\right)^s C_{sm}^{s+1}\Delta_0^{(s-1)s} \sum_0^\infty \left(\frac{\mu}{2}\right)^{i(s(s-1))} =\left(\frac{16}{5}\right)^s C_{sm}^{s+1}\Delta_0^{(s-1)s}\frac{1}{1-\left(\frac{\mu}{2}\right)^{s(s-1)}}.$$

\end{proof}

\begin{proof}[Proof of \Href{Claim}{claim:sum}]
Denote 
\[
S_k = 
\frac{1}{8 R}\sum_{j=0}^k\frac{R^\prime(\Delta_j, R)}
{2^{k-j}}=
\sum_{j=0}^k\frac{  \mglx{\frac{\Delta_j}{R}}}
{2^{k-j}\left(1 - 8   \omx{\frac{\Delta_j}{R}}\right)}.
\]

Taking into account that  $1 - 8   \omx{\frac{\Delta_j}{R}}\ge \frac{2}{5}$, we obtain that

\[
S_k\leq \frac{5}{2}\cdot\sum_{j=0}^k \frac{ 1 }{2^{k-j}} \mglx{\frac{\Delta_j}{R}}.
\]

Hence, considering that ${\Delta_j}\leq\left(\frac{\mu}{2}\right)^j\Delta_0$ and $\mglx{\tau}\leq C_{sm}\tau^s$, we get

\[
S_k \leq 
\frac{5}{2}\cdot\sum_{j=0}^k \frac{ C_{sm} }{2^{k-j}} \left(\left(\frac{\mu}{2}\right)^j \frac{\Delta_0}{R}\right)^s =
\frac{5}{2}\cdot\frac{ C_{sm} }{2^{k}}  
\left(\frac{\Delta_0}{R}\right)^s
\sum_{j=0}^k \frac{\mu^{js}}{2^{js-j}}
 < 
 \frac{5}{2}\cdot\frac{ C_{sm} }{2^{k}}  
\left(\frac{\Delta_0}{R}\right)^s
\sum_{j=0}^{\infty}\left(\frac{\mu^{s}}{2^{s-1}}\right)^{j}.
\]

Assumption \eqref{assump:3} 
($\frac{\mu^{s}}{2^{s-1}}  < 1$) yields that
\[\sum_{j=0}^{\infty}\left(\frac{\mu^{s}}{2^{s-1}}\right)^j
= \frac{1}{ 1 - \frac{\mu^{s}}{2^{s-1}}}
< \infty.
\]
Finally, 
we obtain that
\[
S_k < 
\frac{3 C_{sm}}{1 - \frac{\mu^{s}}{2^{s-1}}}
\cdot\left(\frac{\Delta_0}{R}\right)^s
\cdot \frac{1}{2^{k}},
\]
completing the proof of \Href{Claim}{claim:sum}.
\end{proof}

\bibliographystyle{alpha}
\bibliography{../uvolit}

\end{document}